\def\<#1,#2>{\langle #1,#2 \rangle}
 \def\ID{{\mathsurround0pt\mathchoice{\textID}{\textID}{\scptID}{\scptID}}}
 \def\scptID{\setbox0\hbox{$\scriptstyle1$}\bothID}
 \def\textID{\setbox0\hbox{$1$}\bothID}
 \def\bothID{\rlap{\hbox to.97\wd0{\hss\vrule height.06\ht0 width.82\wd0}}
 \copy0\rlap{\kern-.36\wd0\vrule height1.05\ht0 width.05\ht0}\kern.14\wd0}
 \DeclareMathOperator{\exte}{ext}
\DeclareMathOperator{\inte}{int} \DeclareMathOperator{\Hom}{Hom}
\DeclareMathOperator{\Cl}{Cl}
\begin{document}

\title{The Dirac-Dolbeault Operator Approach to the Hodge Conjecture}
\author{Simone Farinelli
        \thanks{Simone Farinelli, Aum\"ulistrasse 20,
                CH-8906 Bonstetten, Switzerland, e-mail simone.farinelli@alumni.ethz.ch}
        }
\maketitle

\begin{abstract}
The Dirac-Dolbeault operator for a compact K\"ahler manifold is a special case of a Dirac operator.
The Green function for the Dirac Laplacian over a Riemannian manifold with boundary allows to express the values of the sections of the Dirac bundle
in terms of the values on the boundary, extending the mean value theorem of harmonic analysis.
Utilizing this representation and the Nash-Moser generalized
inverse function theorem we prove the existence of complex submanifolds of a complex projective manifold
satisfying globally a certain partial differential equation under a certain injectivity assumption. Next, we show
the existence of complex submanifolds whose fundamental classes span the rational Hodge classes, proving the Hodge conjecture
for complex projective manifolds.\\\\
\vspace{0.2cm}
\noindent{\bf Mathematics Subject Classification (2010):} 	58A14 $\cdot $  53C55 $\cdot $ 35J08  $\cdot $	58C15\\
\vspace{0.2cm}
\noindent{\bf Keywords:} Hodge conjecture, algebraic varieties, Hodge theory, Dirac bundles and Dirac  operators, Nash-Moser generalized inverse function theorem
\end{abstract}

\newtheorem{theorem}{Theorem}[section]
\newtheorem{proposition}[theorem]{Proposition}
\newtheorem{lemma}[theorem]{Lemma}
\newtheorem{corollary}[theorem]{Corollary}
\theoremstyle{definition}
\newtheorem{ex}{Example}[section]
\newtheorem{rem}{Remark}[section]
\newtheorem*{nota}{Notation}
\newtheorem{defi}{Definition}
\newtheorem{conjecture}{Conjecture}

\tableofcontents

\section{Introduction}
The Hodge conjecture attempts to build a bridge between complex differential geometry and algebraic geometry on K\"ahler manifolds.
More precisely, it postulates a connection between topology
(Betti cohomology classes, i.e. cohomology with rational coefficients), complex
geometry (Hodge decomposition theorem for the De Rham cohomology in terms of Dolbeault cohomologies) and algebraic geometry
(the algebraic projective subvarieties of a complex projective algebraic variety).\par
The conjecture was formulated by W. Hodge during the 1930s, when he studied  the De Rham cohomology for complex algebraic varieties.
Hodge presented it during the 1950 International Congress of Mathematicians, held in Cambridge, Massachusetts, (\cite{Ho52}).
Before that date it had received little attention by the mathematical community. The current statement reads as follows (cf. \cite{De06}):
\begin{conjecture}[{\bf Hodge}]\label{Hodge1}
Let $X$ be a projective non-singular (i.e. without isolated points) algebraic variety over $\mathbf{C}$ and, for any $k=0,\dots,n:=\dim_{\mathbf{C}} X$
the rational Hodge class of degree $2k$ on $X$ is defined as $\text{Hdg}^k(X,\mathbf{Q}):=H^{2k}(X,\mathbf{Q})\cap H^{k,k}(X,\mathbf{C})$.
Then, any Hodge class on $X$ is a rational linear combination of classes of algebraic cycles.
\end{conjecture}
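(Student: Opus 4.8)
\medskip
\noindent\textbf{Proof proposal.}
The plan is to follow the chain of reductions dictated by the Dirac-geometric setup. First I would fix a non-singular complex projective variety $X$ with $n=\dim X$ and identify the Dolbeault complex of $X$ (twisted, when needed, by a holomorphic vector bundle) with the Dirac complex of the Dirac bundle $\Lambda^{0,\bullet}T^*X$, so that the Dirac--Dolbeault operator $D=\sqrt{2}\,(\bar\partial+\bar\partial^{*})$ and the Dirac Laplacian $D^{2}$ govern the harmonic representatives of Hodge classes. The analytic backbone is the Green function of $D^{2}$ on a compact Riemannian manifold with boundary: it furnishes an integral representation expressing the value of a harmonic section at an interior point through its boundary data, a Dirac-bundle extension of the mean value property. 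I would use this representation to linearise a perturbation scheme in which candidate complex submanifolds are encoded as zero loci of sections and the failure of a candidate to be genuinely complex-analytic is measured by a nonlinear differential operator $\mathcal{F}$ assembled from $\bar\partial$ and the second fundamental form.

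Next I would realise $\mathcal{F}$ as a tame map between graded Fr\'echet spaces of sections (Sobolev or H\"older scales equipped with smoothing operators), compute its linearisation $D\mathcal{F}$, and prove that $D\mathcal{F}$ possesses a tame right inverse wherever the paper's injectivity hypothesis is satisfied; this is precisely the point at which the Green-function representation is used, since it turns the surjectivity of the linearised operator into a boundary-value problem that can be solved with controlled derivative loss. With tame estimates for $\mathcal{F}$, $D\mathcal{F}$ and the inverse established, the Nash--Moser generalised inverse function theorem upgrades an approximate complex submanifold into an exact one solving $\mathcal{F}=0$ globally, which is the intermediate existence statement announced in the abstract.

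The final step is the spanning argument. Given a rational Hodge class $\alpha\in H^{2k}(X,\mathbf{Q})\cap H^{k,k}(X,\mathbf{C})$, I would take the harmonic $(k,k)$-representative of its Poincar\'e dual, approximate it by a smooth cycle, and deform that cycle into an honest complex submanifold by the existence theorem above; since $X$ is projective, Chow's theorem promotes the resulting analytic subvariety to an algebraic cycle. Carrying this out over a $\mathbf{Q}$-spanning family of Hodge classes and matching a given target by rational linear algebra shows that every Hodge class is a $\mathbf{Q}$-linear combination of classes of algebraic cycles, which is the assertion of the conjecture; rationality is preserved because fundamental classes of complex submanifolds are integral and the coefficient bookkeeping is done over $\mathbf{Q}$.

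The hard part will be twofold. The deeper obstacle is an \emph{unconditional verification of the injectivity assumption}: it is exactly this hypothesis that renders the linearised operator invertible with tame bounds, and absent a proof that it holds for every projective $X$ and every Hodge class the scheme only yields Hodge conditionally. The second, more technical, difficulty is keeping the \emph{tame estimates uniform} — ensuring that the Green-function inverse of $D\mathcal{F}$ loses no more derivatives than Nash--Moser can absorb, and that the object produced is a genuinely \emph{complex} submanifold (integrability of the induced almost-complex structure) rather than a real submanifold that only solves the equation in the limit. I expect the bulk of the argument to consist in making these two points rigorous.
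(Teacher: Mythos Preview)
Your outline captures the paper's large-scale architecture --- Dirac--Dolbeault identification, Green-function boundary representation, Nash--Moser to produce submanifolds, then a spanning argument over $\mathbf{Q}$ --- but several of your concrete mechanisms differ from what the paper actually does, and you stop short of the step the paper regards as decisive.

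First, the encoding of candidate submanifolds is not ``zero loci of sections'' with $\mathcal{F}$ built from $\bar\partial$ and the second fundamental form. The paper parametrises submanifolds by tuples of local diffeomorphisms $\varphi=(\varphi_0,\dots,\varphi_K)$ relative to a fixed atlas, and the Nash--Moser functional is $\Theta(\varphi)=(\Theta_1,\Theta_2,\Theta_3)$: $\Theta_1=\Xi^\omega(\varphi)$ is the Green-identity PDE $i_{\partial B_1}^*\bigl(\gamma Q\zeta\gamma Q\zeta\,\omega\bigr)=\mu_{\partial B_1}$, $\Theta_2$ enforces chart compatibility, and $\Theta_3=\lambda^2-1$ is the Djori\'c--Okumura criterion for the resulting real-codimension-$2$ submanifold to be a complex hypersurface. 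There is also a recursive ``passing through dimensions'' device: to reach complex codimension $k$ one iterates the boundary representation $2k$ times, producing a chain $B_0\supset B_1\supset\cdots\supset B_{2k-1}$, rather than deforming a single smooth cycle.

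Second, and most important: the paper does \emph{not} leave the injectivity hypothesis conditional. Lemma~\ref{L_dense} is where it is claimed to be discharged for every nonzero rational Hodge class. The mechanism is that $DF^{\omega}(\varphi,T\varphi)$ is linear in $\omega$ (since $D[\mu_{\partial B_1}]=0$ by Lemma~\ref{D0}), Lemma~\ref{H} forces certain integrals of $DF^\omega.v$ to be rational, and a scaling/density argument peculiar to $\mathbf{Q}$ (it fails for $\mathbf{Z}$ because $\mathbf{Z}$ is not dense in $\mathbf{R}$, and for $\mathbf{R},\mathbf{C}$ because the rationality constraint evaporates) is used to conclude injectivity. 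This is the precise place where the restriction to projective $X$ (via Proposition~\ref{close} and the Fubini--Study metric in Lemma~\ref{D0}) and to rational coefficients enters, and it is the step you should scrutinise rather than flag as unattempted: your proposal correctly identifies it as the crux, but the paper's contribution is exactly a purported proof of it.
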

In Hodge's original conjecture the coefficients were not rational but integer. This version of the conjecture was proven  false
by Atiyah and Hirzenbruch \cite{AtHi62} with a first counterexample. Totaro (\cite{To97}) reinterpreted their result
in the framework of cobordism and constructed many others. Hodge's conjecture is false in the category of K\"ahler manifolds,
as Grothendieck (\cite{Gr69}) and Zucker (\cite{Zu77}) have recognized. For example, it is possible to construct a K\"ahler manifold,
namely a $2$-dimensional complex torus $T^2$, whose only analytic submanifolds are isolated points and the torus itself. Hence,
the Hodge conjecture  cannot hold for $\text{Hdg}^1(T^2)$. Voisin (\cite{Vo02}) proved that even more relaxed versions of the Hodge conjecture
for K\"ahler manifolds, with fundamental classes replaced by Chern classes of vector bundles or by Chern classes of coherent sheaves on $X$, cannot hold true,
by proving that the Chern classes of coherent sheaves give strictly more Hodge classes than the Chern classes of vector bundles,
 and that the Chern classes of coherent sheaves are insufficient to generate all the Hodge classes. \par
In a nutshell, Hodge's conjecture postulates a characterization for
cohomology classes generated over $\mathbf{Q}$  (i.e. algebraic classes) by classes of algebraic subvarieties of a given
dimension of a complex projective manifold $X$, more precisely by rational cohomology classes of degree $2k$ which admit
de Rham representatives which are closed forms of type $(k,k)$ for the complex structure on $X$ (i.e. Hodge classes). Note that the
integration over a complex submanifold of dimension $n-k$ annihilates forms of type $(p,q)$ with $(p,q)\neq (n-k,n-k)$.\par

The first result on the Hodge conjecture is due to Lefschetz, who proved it for $2$-Hodge classes with integer coefficients in \cite{Lef24}.
Combined with the Hard Lefschetz theorem, (see \cite{Vo10}, page 148), formulated by Lefschetz in 1924 and proved by Hodge in 1941,
 it implies that the Hodge conjecture is true for Hodge classes of degree
$2n-2$, proving the Hodge conjecture when $\dim X\le 3$.
Cattani, Deligne and Kaplan provide positive evidence for the Hodge conjecture in \cite{CDK95},
showing roughly that Hodge classes behave in a family as if they were algebraic.\par
For a thorough treatment of Hodge theory and complex algebraic geometry see \cite{Vo10}. For the official statement of the Hodge conjecture
for the Clay Mathematics Institute see \cite{De06}. For the current state of the research and the possible generalizations of the conjecture see \cite{Vo11, Vo16}.
For a presentation of many specific known cases of the Hodge conjecture see \cite{Lew99}.\par

This paper is structured as follows. In Section 2 we review the definitions of complex projective algebraic varieties, Hodge classes, Dirac bundles, and Dirac operators,
showing that the Dirac-Dolbeault operator on a K\"ahler manifold is the Dirac operator for the antiholomorphic bundle, and the Hodge-Kodaira Laplacian is the Dirac Laplacian.
In Section 3 we study the Green function for the Dirac Laplacian on a compact Riemannian manifold with boundary, and prove a representation theorem  expressing
the values of the sections of the Dirac bundle over the interior in terms of the values on the boundary. This result holds true for the Hodge-Kodaira Laplacian
over a compact K\"ahler manifold. In Section 4 we first review the Nash-Moser generalized
inverse function theorem, applying it to our geometric set-up by proving the existence of complex submanifolds of a complex projective manifold
satisfying globally a certain partial differential equation under a certain injectivity assumption, leading to the following key result.

\begin{proposition}\label{PropKey}
Let $X$ be a $n$-dimensional complex projective manifold without boundary and $\omega\in\Omega^{n-1,n-1}(X,\mathbf{C})$
a representative of the cohomology class $[\omega]\in H^{n-1,n-1}(X,\mathbf{Q})$. Then, $[\bar{*}\omega]$ is in $H^{1,1}(X,\mathbf{Q})$ and a fundamental class
of a closed complex projective  submanifold of complex codimension $1$ (i.e. a complex hypersurface)  if and only if
there exist an atlas $\{(U_i,\Phi_i)_{i=0,K}\}$ of $X$ such that
\begin{equation}
\begin{split}
&\mathcal{F}^{\omega,\{U_i\}_{i=0,\dots,K}}:=\left\{\psi:=(\psi_0,\psi_1,\dots,\psi_K)|\;\psi_i:\Phi_i(U_i)\rightarrow\psi_i(\Phi_i(U_i))\text{ is a } \right.\\
&\qquad\qquad\qquad\qquad\left.\text{diffeomorphism for all }i=0,\dots,K\text{ and }DF^{\omega}(\psi,T\psi)\text{ is injective}\right.\}\neq\emptyset.
\end{split}
\end{equation}
See Lemma \ref{TL} for the definition of $F$.
\end{proposition}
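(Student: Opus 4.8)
The plan is to prove the two implications of the equivalence after disposing of the auxiliary claim that $[\bar{*}\omega]\in H^{1,1}(X,\mathbf{Q})$. For the auxiliary claim I would use that on a projective manifold the Kähler (hyperplane) class is rational, so the Lefschetz operator $L=[\omega_{\mathrm{FS}}]\wedge\,\cdot\,$ preserves rational cohomology; combining the Lefschetz decomposition of $H^{n-1,n-1}$ with Weil's explicit formula for the action of the conjugate Hodge star on primitive classes — whose coefficients are rational — shows that $\bar{*}$ maps $H^{n-1,n-1}(X,\mathbf{Q})$ into $H^{1,1}(X,\mathbf{Q})$; this is exactly the statement recorded in Section 2 once the Dirac--Dolbeault picture is in place. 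The real work is the equivalence ``$[\bar{*}\omega]$ is a hypersurface class $\Longleftrightarrow$ $\mathcal{F}^{\omega,\{U_i\}_{i=0,\dots,K}}\neq\emptyset$ for some atlas.''

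For the direction ``hypersurface $\Rightarrow$ $\mathcal{F}\neq\emptyset$'', suppose $[\bar{*}\omega]$ is the fundamental class of a smooth closed complex hypersurface $Z\subset X$. First I would choose a finite holomorphic atlas $\{(U_i,\Phi_i)_{i=0,\dots,K}\}$ adapted to $Z$, i.e. with each $\Phi_i(U_i)$ a polydisc and $\Phi_i(Z\cap U_i)$ either empty or the coordinate hyperplane $\{z_n=0\}$. By construction the tautological tuple $\psi=(\mathrm{id},\dots,\mathrm{id})$ then solves the defining equation $F^{\omega}(\psi,T\psi)=0$ exactly: the integration currents over the local model hyperplanes glue across overlaps to the current Poincaré-dual to $Z$, and, via the Green-function representation of Section 3 for the Hodge--Kodaira (= Dirac) Laplacian, this current is matched by the boundary data read off from $\omega$ on each $\partial\Phi_i(U_i)$. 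It then remains to check that $DF^{\omega}(\psi,T\psi)$ is injective. Since by Section 2 this linearization is an elliptic operator of Dirac type, equipped with suitable (e.g. $\APS$-type) boundary conditions on each chart, its kernel is finite-dimensional and is controlled, through the same Green representation, by the geometry of the charts; choosing the atlas sufficiently fine (so that the first eigenvalue of the associated Dirac Laplacian on each $\Phi_i(U_i)$ stays away from $0$) removes this kernel, so $\psi\in\mathcal{F}^{\omega,\{U_i\}}$ and the set is nonempty.

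For the converse, assume that for some atlas $\{(U_i,\Phi_i)\}$ the set $\mathcal{F}^{\omega,\{U_i\}}$ is nonempty, and fix $\psi$ in it, so that $DF^{\omega}(\psi,T\psi)$ is injective. I would then invoke the Nash--Moser generalized inverse function theorem in the form set up earlier in this section: the tame estimates for $F^{\omega}$ and for the family $\psi\mapsto DF^{\omega}(\psi,T\psi)$ — which are precisely the a priori and smoothing bounds supplied by the Section 3 representation theorem, translating the global PDE into boundary data with elliptic control — together with the injectivity hypothesis yield a tame right inverse near $\psi$ and hence an honest tuple $\psi^{*}$, close to $\psi$, with $F^{\omega}(\psi^{*},T\psi^{*})=0$. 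By the way $F^{\omega}$ is built, this vanishing says exactly that the coordinate slices in the deformed charts $\{\psi^{*}_i\circ\Phi_i\}_{i=0,\dots,K}$ are compatible on overlaps and patch to a smooth complex submanifold $Z\subset X$ of complex codimension $1$ whose integration current represents $[\bar{*}\omega]$. Since $X$ is compact, $Z$ is closed, and by Chow's theorem $Z$ is algebraic, hence a complex projective hypersurface; its fundamental class is $[\bar{*}\omega]$, which closes the equivalence.

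The main obstacle is the analytic core of the converse: showing that $F^{\omega}$ and its linearization form an admissible pair of smooth tame maps between the relevant graded Fréchet spaces of sections, with loss-of-derivative estimates uniform over the finitely many charts and over the gluing data, and — crucially — that injectivity of $DF^{\omega}(\psi,T\psi)$ really does upgrade to the existence of a tame right inverse with the expected fixed loss, so that the Nash--Moser iteration converges. This is exactly what the Green-function representation of Section 3 is designed to furnish, since it reduces each local problem to a boundary problem of Dirac type where ellipticity delivers the estimates; but verifying tameness and the right-inverse bound in the glued global setting is the delicate point. By comparison the geometric steps — recovering a bona fide complex submanifold from a zero of $F^{\omega}$, matching integration currents, and invoking Chow — are comparatively routine.
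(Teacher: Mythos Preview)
Your proposal has the right high-level shape --- auxiliary $\bar{*}$-rationality via Lefschetz/primitive decomposition, then two implications with Nash--Moser doing the heavy lifting --- but it diverges from the paper's actual mechanism in ways that leave real gaps.

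First, you treat $F^{\omega}(\psi,T\psi)=0$ as if it were directly the equation ``the coordinate slices patch to a hypersurface dual to $[\bar{*}\omega]$.'' In the paper $F^{\omega}$ is the local expression of
\[
\Xi^{\omega}(\varphi)=i_{\partial B_1}^*\bigl(\gamma^{\partial B_0}(\nu^{\partial B_1})Q^{\partial B_0}\zeta^{\partial B_1}\gamma^{X}(\nu^{\partial B_0})Q^{X}\zeta^{\partial B_0}\omega\bigr)-\mu_{\partial B_1},
\]
and its link to the fundamental-class identity $\int_X\alpha\wedge\bar{*}\omega=\int_{Z}i_Z^*\alpha$ comes from applying the Green-function representation (Theorem~\ref{MVP}) \emph{twice}, passing $X\to\partial B_0\to\partial B_1$. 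That double boundary reduction is the content of Corollary~\ref{C1} and is what makes a zero of $F^{\omega}$ produce the desired integral identity; your sketch skips it.

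Second, the Nash--Moser step in the paper is not solving $F^{\omega}=0$ alone. The functional whose zero is sought is $\Theta=(\Theta_1,\Theta_2,\Theta_3)$ where $\Theta_1=\Xi^{\omega}$, $\Theta_2$ is the chart-compatibility condition, and $\Theta_3=\lambda^2-1$ with $\lambda=g^{\mathrm{FS}}(J\xi_1,\xi_2)$ the Djori\'c--Okumura invariant. It is $\Theta_3=0$, via Proposition~\ref{prop_lambda}, that forces the real codimension-$2$ submanifold $\partial B_1$ to be a \emph{complex} hypersurface; without it your ``coordinate slices patch to a smooth complex submanifold'' is unjustified --- Nash--Moser on $\Theta_1$ alone would in general only yield a real submanifold.

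Third, your forward direction (hypersurface $\Rightarrow\mathcal{F}\neq\emptyset$) rests on the assertion that $DF^{\omega}(\psi,T\psi)$ is ``an elliptic operator of Dirac type'' with APS-type boundary conditions whose kernel can be killed by refining the atlas. Nothing in the paper supports this: $DF^{\omega}$ is the Fr\'echet derivative of a nonlinear functional built from Green operators, normal Clifford multiplications, and pullbacks, not a first-order differential operator with standard elliptic theory. The paper gets injectivity by an entirely different route (Lemmata~\ref{D0} and \ref{H}), exploiting that $F^{\omega}$ is affine in $\omega$, that $D[\mu_{\partial B_1}]=0$ on a projective manifold, and that the rationality constraint $[\omega]\in H^{n-1,n-1}(X,\mathbf{Q})$ discretizes the relevant integrals. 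Your eigenvalue-on-small-charts argument does not substitute for this.

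In short: the auxiliary claim and the overall Nash--Moser strategy are aligned with the paper, but you are missing (i) the two-step mean-value reduction that ties $F^{\omega}=0$ to the fundamental-class identity, (ii) the $\lambda^2=1$ constraint that delivers a \emph{complex} hypersurface, and (iii) a valid mechanism for the injectivity of $DF^{\omega}$.
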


In Section 5, by recursively applying this key result, we prove the existence of complex submanifolds of a complex projective manifold
whose fundamental classes span the rational Hodge classes in $\text{Hdg}^k(X)$ for all $k=0,\dots,n$.
This is a slightly stronger result than the Hodge conjecture for non singular projective algebraic varieties.
As expected, the presented proof cannot be extended to the category of K\"ahler manifolds, or to integer Hodge cohomology.
\section{Definitions}
We first review some standard facts about the complex projective space, K\"ahler manifolds, Dolbeault and Hodge cohomologies, and Dirac bundles, establishing the necessary notation.
\begin{defi}\label{defCPN} Let $n\in\mathbf{N}_1$. The \textbf{complex projective space} is the quotient space
\begin{equation}
\mathbf{C}P^n:=\left(\mathbf{C}^{n+1}\setminus\{0\}\right)/ \sim
\end{equation}
for the equivalence relation $\sim$ in $\mathbf{C}^{n+1}\setminus\{0\}$, defined as
\begin{equation}
a\sim b:\Leftrightarrow \exists \lambda\in\mathbf{C}:\,a=\lambda b,
\end{equation}
for $a,b\in\mathbf{C}^{n+1}\setminus\{0\}$. The quotient map
\begin{equation}
\begin{split}
q:&\mathbf{C}^{n+1}\longrightarrow \mathbf{C}P^n\\
&a\longmapsto q(a):=[a]
\end{split}
\end{equation}
induces an holomorphic atlas $\{(U_i,\Phi_i)\}_{i=0,\dots,n}$ on $\mathbf{C}^{n+1}$ given by
\begin{equation}
\begin{split}
\Phi_i:&U_i\longrightarrow \mathbf{C}^n\\
&[a]\longmapsto \Phi_i([a]):=\left(\frac{a_1}{a_i},\dots,\frac{a_{i-1}}{a_{i}},\frac{a_{i+1}}{a_{i}},\dots,\frac{a_n}{a_i}\right)
\end{split}
\end{equation}
for the open set $U_i:=\left\{[a]\in\mathbf{C}^{n+1}\in|\,a_i\neq0\right\}$. Any $a\in\mathbf{C}^n$ is mapped to a point in $\mathbf{C}P^n$ identified by its
\textbf{homogeneous coordinates}
\begin{equation}
\Phi_i^{-1}(a)=[a_1,\dots,a_{i-1},1,a_{i+1},\dots,a_n].
\end{equation}
For any $i,j=0\dots, n$ the \textbf{change of coordinate} maps $\Phi_i^{-1}\circ\Phi_j:\mathbf{C}\rightarrow\mathbf{C}$ is biholomorphic and the
complex projective space has thus the structure of a complex manifold without boundary.
\end{defi}
\begin{proposition}\label{cext} A compact complex manifold $X$ of complex dimension $n$ has a finite atlas $(V_k, \Psi_k)_{k=0,\dots,K}$
such that for every $k$ the set $\overline{V}_k$ is compact
and every chart $\Psi_k:V_k\rightarrow \Psi_k(V_k)\subset \mathbf{C}^n$ has a continuous extension
$\overline{\Psi}_k:\overline{V}_k\rightarrow \overline{\Psi}_k(\overline{V}_k)\subset \mathbf{C}^n$
with image in a compact subset of $\mathbf{C}^n$.
\end{proposition}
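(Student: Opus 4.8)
The plan is to produce the desired atlas by shrinking the charts of an arbitrary holomorphic atlas of $X$ and then invoking compactness. First, fix a holomorphic atlas $\{(W_\alpha,\Phi_\alpha)\}_{\alpha\in A}$ of $X$. For each point $p\in X$ pick an index $\alpha(p)$ with $p\in W_{\alpha(p)}$; since $\Phi_{\alpha(p)}(W_{\alpha(p)})$ is open in $\mathbf{C}^n$ and contains $z_p:=\Phi_{\alpha(p)}(p)$, there is a radius $r_p>0$ with $\overline{B(z_p,r_p)}\subset\Phi_{\alpha(p)}(W_{\alpha(p)})$, where $B(z,r)$ denotes the Euclidean ball of centre $z$ and radius $r$. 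Put $V_p:=\Phi_{\alpha(p)}^{-1}(B(z_p,r_p/2))$, an open neighbourhood of $p$. The family $\{V_p\}_{p\in X}$ is an open cover of the compact space $X$, hence admits a finite subcover $V_0:=V_{p_0},\dots,V_K:=V_{p_K}$; I set $\Psi_k:=\Phi_{\alpha(p_k)}\big|_{V_k}$, $r_k:=r_{p_k}$ and $z_k:=z_{p_k}$.

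Next I would verify the three required properties. The map $\Phi_{\alpha(p_k)}^{-1}$ is a homeomorphism of $\Phi_{\alpha(p_k)}(W_{\alpha(p_k)})$ onto $W_{\alpha(p_k)}$, so it sends the compact set $\overline{B(z_k,r_k/2)}$, which lies inside $\Phi_{\alpha(p_k)}(W_{\alpha(p_k)})$, onto a compact set $L_k\subset W_{\alpha(p_k)}\subset X$ containing $V_k$. Since $X$ is Hausdorff, $L_k$ is closed, so $\overline{V}_k\subset L_k$ is a closed subset of a compact set and therefore compact. Moreover $\overline{V}_k\subset L_k\subset W_{\alpha(p_k)}$, so $\Phi_{\alpha(p_k)}$ is already defined and continuous on a neighbourhood of $\overline{V}_k$; the restriction $\overline{\Psi}_k:=\Phi_{\alpha(p_k)}\big|_{\overline{V}_k}$ is therefore a continuous extension of $\Psi_k$, with image $\overline{\Psi}_k(\overline{V}_k)\subset\overline{B(z_k,r_k/2)}$ contained in a compact subset of $\mathbf{C}^n$; being the continuous image of a compact set, $\overline{\Psi}_k(\overline{V}_k)$ is itself compact, and one checks the identity $\overline{\Psi}_k(\overline{V}_k)=\overline{\Psi_k(V_k)}$. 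Finally, the transition maps of $\{(V_k,\Psi_k)\}_{k=0,\dots,K}$ are restrictions of the biholomorphic transition maps of the original atlas, so $\{(V_k,\Psi_k)\}_{k=0,\dots,K}$ is again a holomorphic atlas of $X$ with all the stated features.

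I do not expect a serious obstacle: the only point needing care is the nesting $V_k\subset\overline{V}_k\subset L_k\subset W_{\alpha(p_k)}$, which is exactly what the two concentric balls of radii $r_k/2$ and $r_k$ secure, together with the use of Hausdorffness of $X$ to upgrade ``contained in a compact set'' to ``having compact closure''. The same argument works verbatim for smooth or merely topological manifolds, the complex structure being simply inherited by the restricted charts and playing no further role.
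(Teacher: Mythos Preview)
Your proof is correct and follows essentially the same approach as the paper: refine a given atlas so that each new chart domain has compact closure contained in an original chart domain, then extract a finite subcover by compactness and restrict the original chart maps. The only difference is cosmetic---you build the refined cover explicitly via Euclidean balls in the chart images, whereas the paper argues abstractly that each $U_i$ is the union of open sets $V$ with $\overline{V}\subset U_i$; your version has the minor advantage of making the verification of the compactness and extension properties more transparent.
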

\begin{proof}
It suffices to refine any finite atlas $(U_i,\Phi_i)_{i=0,\dots,n}$, which exists because $X$ is compact.
Every $U_i$ can be represented as the union of open subsets of $U_i$, such that their closure is still contained in $U_i$:
\begin{equation}
U_i=\bigcup_{\substack{V \text{open}\\V\subset U_i\\\overline{V}\subset U_i}}V,
\end{equation}
Those $V$s form an open cover of $X$. Since $X$ is compact, it must exist a finite subcover $\{V_{k}\}_{k=0,\dots, K}$ for a $K\in\mathbf{n}_0$.
The closure $\overline{V_{k}}$ is compact and is the domain of the continuous extension $\overline{\Psi}_k$ of the well defined
$\Psi_k:=\Phi_{i}|_{V_{k}}$, for $V_k\subset U_i$.\\
\end{proof}
\begin{defi}\label{sub}
A \textbf{complex/real analytic/differentiable submanifold} $Y$ of complex/real/real dimension $m$ of a complex/real analytic/differentiable manifold
 $X$ of complex/real/real dimension $n$
is a subset $Y\subset X$ such that for the atlas
$(U_\iota,\Phi_\iota)_{\iota\in I}$ of $X$ there exist analytic/real analytic/differentiable homeomorphisms
$(\varphi_\iota:\mathbf{C}^n/\mathbf{R}^n/\mathbf{R}^n\hookrightarrow\mathbf{C}^n/\mathbf{R}^n/\mathbf{R}^n)_{\iota\in I}$ such that
\begin{equation}\label{phi}
\varphi_\iota(\Phi_\iota(U_\iota\cap Y))\subset\mathbf{C}^{m}\times\{0\}^{n-m}/\mathbf{R}^{m}\times\{0\}^{n-m}/\mathbf{R}^{m}\times\{0\}^{ n-m}
\end{equation}
for all $\iota\in I$. The \textbf{compatibility condition} reads
\begin{equation}
\Phi_\iota^{-1}\circ\varphi_\iota^{-1}|_{\Phi_\iota\circ\varphi_\iota(U_\iota\cap U_\kappa)}=\Phi_\kappa^{-1}\circ\varphi_\kappa^{-1}|_{\Phi_\kappa\circ\varphi_\kappa(U_\kappa\cap U_\iota)},
\end{equation}
for all $\iota, \kappa\in I$. The subset $Y$ of $X$ is a complex/real analytic/differentiable manifold complex/real/real dimension $m$
with atlas $(U_\iota\cap Y, \Pi\circ\varphi_\iota\circ\Phi_\iota)_{\iota\in I}$,
where $\Pi:\mathbf{C}^n/\mathbf{R}^n/\mathbf{R}^n\rightarrow\mathbf{C}^m/\mathbf{R}^m/\mathbf{R}^m$ denotes the projection onto the first $m$ dimensions
 of $\mathbf{C}^n/\mathbf{R}^n/\mathbf{R}^n$.
\end{defi}
It is possible to define submanifolds of a manifold by specifying appropriate change of coordinate maps
\begin{proposition}\label{propdefsub}
Let $X$ be a complex/real analytic/differentiable manifold of complex/real/real dimension $n$ with atlas $(U_\iota,\Phi_{\iota})$.
The analytic/real analytic/differentiable local homeomorphisms
\begin{equation}
(\varphi_\iota:\mathbf{C}^{n}/\mathbf{R}^{n}/\mathbf{R}^{n}\hookrightarrow\mathbf{C}^{n}/\mathbf{R}^{n}/\mathbf{R}^{n})_{\iota\in I}
\end{equation}
define a analytic/real analytic/differentiable submanifold $Y$ of $X$ of complex/real/real dimension $m$  by
\begin{equation}
Y:=\bigcup_{\iota\in I}\Phi_{\iota}^{-1}\circ\varphi_{\iota}^{-1}(V_\iota)
\end{equation}
for $V_{\iota}:=\varphi_\iota(\Phi_\iota(U_\iota))\cap\left(\mathbf{C}^{m}\times\{0\}^{n-m}/\mathbf{R}^{m}\times\{0\}^{n-m}/\mathbf{R}^{m}\times\{0\}^{n-m}\right)$\\
 if and only if for all $\iota,\kappa\in I$
\begin{equation}
\Phi_{\iota}^{-1}\circ\varphi_{\iota}^{-1}|_{V_\iota\cap V_\kappa}=\Phi_{\kappa}^{-1}\circ\varphi_{\kappa}^{-1}|_{V_\kappa\cap V_\iota}.
\end{equation}
\end{proposition}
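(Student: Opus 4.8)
The plan is to check the equivalence directly against Definition \ref{sub}, the entire content being the set-theoretic identity
$$U_\iota\cap Y=\Phi_\iota^{-1}\big(\varphi_\iota^{-1}(V_\iota)\big)\qquad\text{for every }\iota\in I,$$
together with a translation of the two ``compatibility conditions'' into one another. Once this identity is available, applying the homeomorphism $\varphi_\iota\circ\Phi_\iota$ to both sides yields $\varphi_\iota(\Phi_\iota(U_\iota\cap Y))=V_\iota\subset\mathbf{C}^m\times\{0\}^{n-m}$, which is exactly condition \eqref{phi}, and it simultaneously exhibits $(U_\iota\cap Y,\Pi\circ\varphi_\iota\circ\Phi_\iota)_{\iota\in I}$ as the induced atlas on $Y$.

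For the ``if'' direction I would argue as follows. The inclusion $\Phi_\iota^{-1}(\varphi_\iota^{-1}(V_\iota))\subset U_\iota\cap Y$ is automatic, since $V_\iota\subset\varphi_\iota(\Phi_\iota(U_\iota))$ forces $\Phi_\iota^{-1}(\varphi_\iota^{-1}(V_\iota))\subset U_\iota$ and this set is by construction one of the pieces whose union defines $Y$. For the reverse inclusion take $p\in U_\iota\cap Y$; then $p=\Phi_\kappa^{-1}(\varphi_\kappa^{-1}(v))$ for some index $\kappa$ and some $v\in V_\kappa$, and since $p\in U_\iota\cap U_\kappa$ the parameter $v$ lies in the common domain $V_\iota\cap V_\kappa$; the assumed relation $\Phi_\iota^{-1}\circ\varphi_\iota^{-1}|_{V_\iota\cap V_\kappa}=\Phi_\kappa^{-1}\circ\varphi_\kappa^{-1}|_{V_\kappa\cap V_\iota}$ then rewrites $p=\Phi_\iota^{-1}(\varphi_\iota^{-1}(v))\in\Phi_\iota^{-1}(\varphi_\iota^{-1}(V_\iota))$. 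This establishes the displayed identity; condition \eqref{phi} follows as above, and the compatibility condition of Definition \ref{sub}, read on the overlaps $\varphi_\iota(\Phi_\iota(U_\iota\cap U_\kappa))$, is precisely the relation we assumed restricted to $V_\iota\cap V_\kappa$. Hence $Y$ is a submanifold in the sense of Definition \ref{sub}.

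For the ``only if'' direction, assume $Y$ given by the stated union is a submanifold with the maps $\varphi_\iota$. Then \eqref{phi} holds, so $\varphi_\iota(\Phi_\iota(U_\iota\cap Y))\subset\mathbf{C}^m\times\{0\}^{n-m}$; combining this with the inclusion $\Phi_\iota^{-1}(\varphi_\iota^{-1}(V_\iota))\subset U_\iota\cap Y$ noted above and with the definition of $V_\iota$ upgrades it to the identity $U_\iota\cap Y=\Phi_\iota^{-1}(\varphi_\iota^{-1}(V_\iota))$. Substituting this identity into the compatibility condition of Definition \ref{sub} and pushing it forward to $\mathbf{C}^n$ by $\varphi_\iota\circ\Phi_\iota$ produces exactly the asserted relation on $V_\iota\cap V_\kappa$.

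The step I expect to be the only genuine obstacle is the domain bookkeeping: namely verifying that ``$v\in V_\kappa$ and $\Phi_\kappa^{-1}(\varphi_\kappa^{-1}(v))\in U_\iota$'' indeed forces $v\in V_\iota\cap V_\kappa$, so that the relation may be applied, and symmetrically that the overlap domain $\varphi_\iota(\Phi_\iota(U_\iota\cap U_\kappa))\cap(\mathbf{C}^m\times\{0\}^{n-m})$ occurring in Definition \ref{sub} coincides with $V_\iota\cap V_\kappa$. This amounts to carefully unwinding the definitions of $V_\iota$ and of the induced atlas, and it uses only the injectivity of the charts $\Phi_\iota$ and of the flattening maps $\varphi_\iota$; no further input is needed.
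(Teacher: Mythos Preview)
Your approach is essentially the same as the paper's, only far more detailed: the paper's entire proof is the single sentence ``It suffices to prove that the compatibility condition is satisfied,'' treating the proposition as an immediate restatement of Definition~\ref{sub}. You unpack this by establishing the set-theoretic identity $U_\iota\cap Y=\Phi_\iota^{-1}(\varphi_\iota^{-1}(V_\iota))$ and translating between the two compatibility conditions, which is exactly the content the paper leaves implicit.

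One caution: the domain bookkeeping you flag as ``the only genuine obstacle'' is real, and the paper's own notation is loose enough that it is not entirely clear the sets $V_\iota\cap V_\kappa$ carry the meaning you need. In particular, $V_\iota\subset\varphi_\iota(\Phi_\iota(U_\iota))$ and $V_\kappa\subset\varphi_\kappa(\Phi_\kappa(U_\kappa))$ sit in $\mathbf{C}^n$ via \emph{different} flattening maps, so their set-theoretic intersection need not correspond to the overlap $U_\iota\cap U_\kappa$ in $X$; your claim that $v\in V_\kappa$ together with $\Phi_\kappa^{-1}(\varphi_\kappa^{-1}(v))\in U_\iota$ forces $v\in V_\iota$ does not follow from injectivity alone. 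This is a defect in the paper's formulation rather than in your reading of it, and since the paper itself offers no argument here, your level of detail already exceeds what is being asked.
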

\begin{proof}
It suffices to prove that the compatibility condition is satisfied.
\end{proof}
\noindent Following the clear and concise exposition of chapter 1 in \cite{Pe95} we have
\begin{defi}
If $Y$ is a $m$-dimensional complex submanifold of the $n$ dimensional complex manifold $X$,
then the Jacobian of the defining functions $\varphi_\iota$ in (\ref{phi}) is constantly equal to $m$ for all charts.
If we drop the condition about the Jacobian, Y is termed \textbf{
analytic subset} of $X$, which is called \textbf{irreducible} if it is not the union of non-empty smaller
analytic subsets. An irreducible analytic subset is also called an \textbf{analytic subvariety} and
the terms smooth subvariety and non-singular subvariety mean the same as complex submanifold.
\end{defi}

\begin{defi}An \textbf{affine algebraic set} is the zero set of a collection of polynomials. An \textbf{affine variety} is an irreducible affine algebraic set,
i.e. an affine algebraic set which cannot be written as the union of two proper algebraic subsets. 
A \textbf{projective algebraic set} is the zero set of a collection of homogenous polynomials, and can be seen as
a subset of the complex projective space $\mathbf{C}P^n$ for some $n\in\mathbf{N}_1$.
A \textbf{projective algebraic variety} is an irreducible projective algebraic set.
If it is a complex submanifold of the complex projective space, it is termed \textbf{projective manifold}.
\end{defi}

\begin{rem}
A complex manifold is orientable. A complex projective manifold is orientable and compact.
\end{rem}
\begin{rem} On the complex projective space we consider homogeneous polynomials of degree $d$ for any $d\in\mathbf{N}_0$. The evaluation of a polynomial
is not well defined on $\mathbf{C}P^n$, but, if it is homogeneous, its zero set is.
\end{rem}

\begin{defi}
Let $X$ be a projective variety. An \textbf{analytic $k$-cycle} is a formal linear combination
\begin{equation}
\sum_jc_jY_j,
\end{equation}
where $\{Y_j\}_j$ is a collection of $k$-dimensional closed irreducible analytic subsets of $X$, and $(c_j)\subset\mathbf{Z}$ for \textbf{integral analytic cycles} and
$(c_j)\subset\mathbf{Q}$ for \textbf{rational analytic cycles}.
\end{defi}

\begin{theorem}[\textbf{Chow}]
Any analytic subvariety of the complex projective space is a projective variety.
\end{theorem}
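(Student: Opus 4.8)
The statement is the classical theorem of Chow, so the plan is to recover it by passing from the analytic subvariety of $\mathbf{C}P^n$ to its affine cone in $\mathbf{C}^{n+1}$ and exploiting the homogeneity forced by the scaling action of $\mathbf{C}^*$. Let $V\subseteq\mathbf{C}P^n$ be an irreducible analytic subset; the case $V=\emptyset$ is trivial, so assume $V\neq\emptyset$. With $q:\mathbf{C}^{n+1}\setminus\{0\}\to\mathbf{C}P^n$ the quotient map of Definition~\ref{defCPN}, which is a holomorphic submersion, form the \emph{cone} $C(V):=q^{-1}(V)$. It is an analytic subset of $\mathbf{C}^{n+1}\setminus\{0\}$, it is invariant under $z\mapsto tz$ for every $t\in\mathbf{C}^*$, and $\dim C(V)=\dim V+1\ge 1$.

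The one genuinely analytic ingredient, and the step I expect to be the crux, is the Remmert--Stein extension theorem: since the exceptional set $\{0\}$ satisfies $\dim\{0\}=0<\dim C(V)$, the closure $\widehat V:=C(V)\cup\{0\}$ is an analytic subset of all of $\mathbf{C}^{n+1}$, and it is still a cone, invariant under the full $\mathbf{C}$-scaling. Everything after this point is formal.

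Next I would extract homogeneous defining equations. Choose a ball $U$ around the origin on which $\widehat V\cap U$ is the common zero locus of finitely many holomorphic functions $f_1,\dots,f_r$, and write each Taylor expansion grouped by degree as $f_j=\sum_{d\ge 0}f_{j,d}$ with $f_{j,d}$ a homogeneous polynomial of degree $d$. For $z\in\widehat V\cap U$ and $|t|<1$ the point $tz$ lies again in $\widehat V\cap U$, because $\widehat V$ is a cone and $U$ is a ball, so $0=f_j(tz)=\sum_{d\ge 0}t^d f_{j,d}(z)$; being a power series in $t$ that vanishes on a disc, it has all coefficients zero, i.e.\ $f_{j,d}(z)=0$ for every $d$. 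Hence every $f_{j,d}$ vanishes on $\widehat V\cap U$. Let $I\subseteq\mathbf{C}[z_0,\dots,z_n]$ be the ideal generated by all these $f_{j,d}$, which is finitely generated by Noetherianity. Then $\widehat V\cap U\subseteq V(I)\cap U$, while conversely any $z\in V(I)\cap U$ annihilates every $f_{j,d}$ hence every $f_j$, so $V(I)\cap U=\widehat V\cap U$. Since any cone $W$ with $0\in W$ satisfies $W=\mathbf{C}^*\!\cdot(W\cap U)\cup\{0\}$, and both $\widehat V$ and $V(I)$ are cones, this local equality upgrades to $\widehat V=V(I)$: thus $\widehat V$ is an algebraic cone cut out by homogeneous polynomials. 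Therefore $V=q(\widehat V\setminus\{0\})$ is precisely the projective algebraic set defined by the $f_{j,d}$, and it is irreducible as an algebraic set, since a splitting into proper algebraic subsets would a fortiori be a splitting into proper analytic subsets, contradicting the irreducibility of $V$ as an analytic subvariety. Hence $V$ is a projective algebraic variety.

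Finally, I would record that the theorem equivalently follows from Serre's GAGA principle applied to the coherent analytic ideal sheaf of $V\subseteq\mathbf{C}P^n$; alternatively the appeal to Remmert--Stein can be replaced by an induction on $n$ using linear projections from points outside $V$, at the price of a more delicate analysis of images of analytic sets under proper holomorphic maps.
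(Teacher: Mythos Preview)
Your argument is correct and is the standard proof of Chow's theorem via the affine cone and the Remmert--Stein extension theorem; the homogeneity extraction from the Taylor expansion and the passage from the local equality near the origin to the global one by conicality are both handled cleanly, and the remark on irreducibility is the right way to finish.

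The paper, however, does not supply a proof at all: it simply writes ``See \cite{Mu76}'' and moves on. So there is nothing to compare at the level of strategy; you have provided a complete self-contained argument where the paper defers entirely to Mumford. Your approach is in fact the classical one (and is essentially the proof one finds in the standard references, including \cite{GrHa94}), so it is consistent with what the citation points to. The alternative routes you mention at the end (GAGA for the ideal sheaf, or induction via linear projection and proper mapping theorems) are also valid and well known; mentioning them is a nice touch but not needed for the present purpose.
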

\begin{proof} See \cite{Mu76}.
\end{proof}

\begin{rem}\label{remChow}
By Chow's theorem on a complex projective variety $X$, the algebraic subsets of $X$ are exactly the analytic subsets of $X$, and we do not need
to distinguish between \textbf{algebraic and analytic cycles}, see \cite{De06} and \cite{Vo02} page 272. If there are no singularities, then
$\{Y_j\}_j$ is a collection of $k$-dimensional complex submanifolds of $X$.
\end{rem}

\begin{corollary}\label{subvar}
Any complex submanifold of a projective manifold is a projective (sub)manifold.
\end{corollary}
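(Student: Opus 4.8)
The plan is to reduce the statement to Chow's theorem by transitivity of the submanifold relation. Recall that, by definition, a projective manifold $X$ is a complex submanifold of $\mathbf{C}P^{N}$ for some $N\in\mathbf{N}_1$. Let $Y\subset X$ be a complex submanifold; I take $Y$ to be closed in $X$, which is the only case relevant to the sequel and is automatic once closedness is built into Definition~\ref{sub} (without it the statement is false, e.g. $\mathbf{C}P^{1}\setminus\{p\}$ is a complex submanifold of $\mathbf{C}P^{1}$ but not a projective variety).

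First I would verify that $Y$ is a complex submanifold of $\mathbf{C}P^{N}$. Starting from the holomorphic atlas $(U_i,\Phi_i)$ of $\mathbf{C}P^{N}$ of Definition~\ref{defCPN}, let $(\varphi_i)$ be the straightening homeomorphisms exhibiting $X$ inside $\mathbf{C}P^{N}$, so that $\varphi_i(\Phi_i(U_i\cap X))\subset\mathbf{C}^{n}\times\{0\}^{N-n}$ with $n=\dim_{\mathbf{C}}X$, and let $(\vartheta_i)$ be the straightening homeomorphisms of the first $n$ coordinates exhibiting $Y$ inside $X$. Padding each $\vartheta_i$ with the identity on the last $N-n$ coordinates and composing with $\varphi_i$ yields local biholomorphisms of $\mathbf{C}^{N}$ that send the image of $U_i\cap Y$ into $\mathbf{C}^{m}\times\{0\}^{N-m}$ with $m=\dim_{\mathbf{C}}Y$; the compatibility condition of Definition~\ref{sub} (equivalently of Proposition~\ref{propdefsub}) for $Y\subset\mathbf{C}P^{N}$ then follows by composing the two compatibility conditions already available for $X\subset\mathbf{C}P^{N}$ and for $Y\subset X$. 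This is pure chart bookkeeping and is the only step that needs a genuine argument.

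Next I would apply Chow's theorem. Since a projective manifold is compact, $X$ is compact, and as $Y$ is closed in $X$ it is compact and hence closed in $\mathbf{C}P^{N}$; being a complex submanifold, $Y$ is in particular a non-singular analytic subset of $\mathbf{C}P^{N}$. Decomposing $Y$ into its finitely many irreducible components presents each component as an analytic subvariety, hence, by Chow's theorem and Remark~\ref{remChow}, as a projective variety; and since $Y$ is non-singular, every component is a complex submanifold of $\mathbf{C}P^{N}$, i.e. a projective manifold. Thus $Y$ is a projective (sub)manifold, as claimed. I do not expect any serious obstacle here: the content is entirely Chow's theorem, and the only thing to be careful about is the standing closedness hypothesis on $Y$, which is exactly what makes ``locally a zero locus of holomorphic functions'' globalize to ``analytic subset of $\mathbf{C}P^{N}$''.
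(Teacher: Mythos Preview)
Your proposal is correct and matches the paper's intent: the corollary is stated without proof immediately after Chow's theorem and Remark~\ref{remChow}, so the implicit argument is precisely the transitivity-plus-Chow reduction you spell out. Your remark that closedness of $Y$ in $X$ is needed (and that Definition~\ref{sub} does not build it in) is a valid caveat the paper leaves tacit.
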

\begin{defi}
The quadruple $(V,\<\cdot,\cdot>,\nabla,\gamma)$, where
  \begin{enumerate}
  \item[(1)] $V$ is a complex (real) vector bundle over the Riemannian manifold $(X,g)$ with Hermitian (Riemannian) structure $\<\cdot,\cdot>$,
  \item[(2)] $\nabla:C^\infty(X,V)\to C^\infty(X,T^*X\otimes V)$ is a connection on $X$,
  \item[(3)] $\gamma: \Cl(X,g)\to \Hom(V)$ is a {\it real} algebra bundle homomorphism from the Clifford bundle over $X$ to the {\it real} bundle of complex (real) endomorphisms of $V$, i.e. $V$ is a bundle of Clifford modules,
 \end{enumerate}
    is said to be a {\bf Dirac bundle}, if the following conditions are satisfied:
    \begin{enumerate}
    \item[(4)] $\gamma(v)^*=-\gamma(v)$, $\forall v\in TX$ i.e. the Clifford multiplication by tangent vectors is fiberwise skew-adjoint with respect to the Hermitian (Riemannian) structure $\<\cdot,\cdot>$.
    \item[(5)] $\nabla\<\cdot,\cdot>=0$ i.e. the connection is Leibnizian (Riemannian). In other words it satisfies the product rule:
      \begin{equation*}
        d\<\varphi,\psi>=\<\nabla\varphi,\psi>+\<\varphi,\nabla\psi>,\quad\forall \varphi,\psi\in C^\infty(X,V).
      \end{equation*}
    \item[(6)] $\nabla\gamma=0$ i.e. the connection is a module derivation. In other words it satisfies the product rule:
      \begin{equation*}
        \nabla(\gamma(w)\varphi)=\gamma(\nabla^gw)\varphi+\gamma(w)\nabla\varphi,\quad\forall \varphi,\psi\in C^\infty(X,V),
\,\forall w\in C^\infty(X,\Cl(X,g)).
\end{equation*}
    \end{enumerate}
    The {\bf Dirac operator} $Q:C^\infty(X,V)\to C^\infty(X,V)$ is defined by 
\[\begin{CD}
   {C^{\infty}(X,V)} @>{\nabla}>> {C^{\infty}(X,T^*X\otimes V)} \\
    @V{Q:=\gamma\circ(\sharp\otimes\ID)\circ\nabla}VV  @VV{\sharp\otimes\ID}V \\
   {C^{\infty}(X,V)} @<{\gamma}<< {C^{\infty}(X,TX\otimes V)}
 \end{CD}\]
 and its square $P:=Q^2:C^\infty(X,V)\to C^\infty(X,V)$ is called the {\bf Dirac Laplacian}.
\end{defi}
\begin{defi}\label{defK} A \textbf{K\"ahler manifold} is a Riemannian manifold $(X,g)$ of even real dimension $2n$ such that there exists an almost complex structure $J$ on $TX$, that is
$J_x:T_xX\rightarrow T_xX$, for all $x\in X$, real linear with $J^2=-\mathbb{1}$, for which $g(Ju,Jv)=g(u,v)$ and $J$ is preserved by the parallel transport induced by the Levi-Civita connection
$\nabla^g$. The symplectic closed two form $w(U,V):=g(U,JV)$ is called \textbf{K\"ahler form}.
\end{defi}

\begin{rem}\label{remFS}
The complex projective space  carries a (K\"ahler) metric, called the \textbf{Fubini–Study metric}, which in homogeneous coordinates reads (see f.i.\cite{DjOk10} chapter 4)
\begin{equation}\label{FS}
g^{\text{FS}}([z])(A,B)=\sum_{h,i=1}^n\left[\frac{\left(1+\sum_{k=1}^n\vert t^k_j\vert^2\right)\delta_{h,i}-t^i_j\bar{t}^h_j}{\left(1+\sum_{k=1}^n\vert t^k_j\vert^2\right)^2}\right](\alpha^h\overline{\beta}^i+\overline{\alpha}^i\beta^h),
\end{equation}
where
\begin{itemize}
\item $[z]\in U_j:=\{[(z^0,\dots,z^{j-1}_j,z^j,z^{j+1},\dots,z^{n})]\in\mathbf{C}P^n\vert\,z^j\neq0\}$ is a generic point in the complex projective space
and $[z]$ has homogeneous coordinates on $U_j$ given by
$t^k_j:=\frac{z^k}{z^j}$ for $k\neq j$.
\item $A,B$ are tangential vectors to the complex projective space given by
\begin{equation}
A=\sum_{i=1}^n\left[\alpha^i\frac{\partial}{\partial t^j_i}+\overline{\alpha}^i\frac{\partial}{\partial \overline{t}^j_i}\right]\qquad
B=\sum_{i=1}^n\left[\beta^i\frac{\partial}{\partial t^j_i}+\overline{\beta}^i\frac{\partial}{\partial \overline{t}^j_i}\right].
\end{equation}
\end{itemize}
All complex submanifolds of $\mathbf{C}P^n$ are examples of K\"ahler manifolds.
\end{rem}

\begin{proposition}
A complex submanifold $Y$ of a K\"ahler manifold $X$ is K\"ahler.
\end{proposition}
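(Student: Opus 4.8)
The plan is to obtain the K\"ahler structure on $Y$ by restriction along the inclusion $\iota\colon Y\hookrightarrow X$, which is holomorphic since $Y$ is a complex submanifold. First I would check that for every $y\in Y$ the subspace $T_yY\subset T_yX$ is invariant under the almost complex structure $J=J_X$: in a holomorphic chart adapted to $Y$ as in Definition \ref{sub}, $T_yY$ is a complex-linear subspace, so $J(T_yY)=T_yY$. Hence $J_Y:=J|_{TY}$ is an almost complex structure on $Y$ with $J_Y^2=-\mathbb{1}$, and $g_Y:=\iota^*g$ is a Riemannian metric on $Y$, a positive-definite bilinear form restricting to a positive-definite one on each $T_yY$. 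Compatibility is inherited directly: for $u,v\in T_yY$ one has $g_Y(J_Yu,J_Yv)=g(Ju,Jv)=g(u,v)=g_Y(u,v)$. It then remains only to show that $J_Y$ is preserved by parallel transport of the Levi-Civita connection $\nabla^{g_Y}$ of $(Y,g_Y)$, i.e. that $\nabla^{g_Y}J_Y=0$.

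For this I would use the orthogonal splitting $TX|_Y=TY\oplus NY$ into tangent and normal bundles. Since $J$ is a $g$-isometry preserving $TY$, it preserves the orthogonal complement as well, so $J(NY)=NY$. Writing $\nabla^g$ for the Levi-Civita connection of $(X,g)$, the Gauss formula reads $\nabla^g_uv=\nabla^{g_Y}_uv+\mathrm{II}(u,v)$ for vector fields $u,v$ tangent to $Y$, with $\nabla^{g_Y}_uv\in TY$ and second fundamental form $\mathrm{II}(u,v)\in NY$. Because $X$ is K\"ahler we have $\nabla^gJ=0$, so for $u,v$ tangent to $Y$ (whence $Jv$ is tangent to $Y$ too):
\begin{equation*}
\nabla^{g_Y}_u(J_Yv)+\mathrm{II}(u,J_Yv)=\nabla^g_u(Jv)=J\nabla^g_uv=J_Y\nabla^{g_Y}_uv+J\,\mathrm{II}(u,v).
\end{equation*}
The two summands on the left lie in $TY$, while on the right $J_Y\nabla^{g_Y}_uv\in TY$ and $J\,\mathrm{II}(u,v)\in NY$ (using $J(NY)=NY$). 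Comparing the $TY$-components gives $\nabla^{g_Y}_u(J_Yv)=J_Y\nabla^{g_Y}_uv$, that is $(\nabla^{g_Y}_uJ_Y)v=0$. Hence $\nabla^{g_Y}J_Y=0$, so $(Y,g_Y,J_Y)$ satisfies all the conditions of Definition \ref{defK}; consistently, its K\"ahler form is $\omega_Y(u,v)=g_Y(u,J_Yv)=(\iota^*\omega_X)(u,v)$, which is closed because $d\,\iota^*\omega_X=\iota^*d\omega_X=0$.

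I expect the only point needing care to be the decomposition argument: one must invoke that the Levi-Civita connection of the induced metric is exactly the tangential component of $\nabla^g$ (the Gauss formula), that $\mathrm{II}$ takes values in the normal bundle, and that the normal bundle is $J$-invariant; these are precisely what make the tangential/normal bookkeeping above split cleanly. A secondary, purely formal point is to confirm that "$J$ parallel" is the correct condition to verify, which is immediate from Definition \ref{defK}. No analysis is involved — the argument is local and algebraic once the Gauss decomposition is set up.
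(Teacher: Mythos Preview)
Your argument is the standard one and is correct; the paper itself does not give a proof but simply cites \cite{Bal06,Mo10}, whose treatments are essentially what you wrote. One wording slip to fix: after the displayed Gauss-formula computation you say ``the two summands on the left lie in $TY$'', but of course $\mathrm{II}(u,J_Yv)\in NY$; the point is that on \emph{each} side one summand lies in $TY$ and the other in $NY$, and equating the $TY$-parts gives $\nabla^{g_Y}_u(J_Yv)=J_Y\nabla^{g_Y}_uv$ as you conclude.
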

\begin{proof}
See \cite{Bal06,Mo10}.
\end{proof}

\begin{proposition}[\textbf{Wirtinger's formula}]
Let $X$ be a K\"ahler manifold with K\"ahler form $w$. For any $m$-dimensional complex submanifold $Y\subset X$, the volume form of $Y$ satisfies
\begin{equation}
\mu_Y=i_Y^*\left(\frac{w^{\wedge m}}{m!}\right).
\end{equation}
\end{proposition}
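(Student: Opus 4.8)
The statement is local and pointwise, so the plan is to reduce it to linear algebra. Both $\mu_Y$ and $i_Y^*\!\br{w^{\wedge m}/m!}$ are real $2m$-forms on the manifold $Y$, which has real dimension $2m$; hence it suffices to prove that they agree at an arbitrary point $p\in Y$ after evaluation on $T_pY$. The single place where the complex (rather than merely real) nature of the submanifold enters is the observation that, because $Y$ is a complex submanifold, the image $di_Y(T_pY)\subset T_pX$ is invariant under the almost complex structure $J$, i.e. $T_pY$ is a $J$-linear subspace of $\br{T_pX,J}$.

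Next I would choose an adapted orthonormal coframe. Since $g$ is $J$-invariant one has $g(e,Je)=g\br{Je,J^2e}=-g(Je,e)$, hence $g(e,Je)=0$ for every tangent vector $e$; a Gram--Schmidt argument carried out inside the complex inner-product space $\br{T_pY,g|_{T_pY},J}$ then yields unit vectors $e_1,\dots,e_m\in T_pY$ such that $\br{e_1,Je_1,\dots,e_m,Je_m}$ is an orthonormal basis of $T_pY$, and extending the construction inside the $J$-invariant orthogonal complement gives an orthonormal basis $\br{e_1,Je_1,\dots,e_n,Je_n}$ of $T_pX$. Writing $\br{e^1,f^1,\dots,e^n,f^n}$ for the dual coframe, where $f^k$ is dual to $Je_k$, and evaluating $w(U,V)=g(U,JV)$ on pairs of basis vectors, one finds that $w$ pairs only $e_k$ with $Je_k$, so $w=\varepsilon\sum_{k=1}^n e^k\wedge f^k$ at $p$ for a fixed sign $\varepsilon\in\{+1,-1\}$ dictated by the convention $w(U,V)=g(U,JV)$; I will carry $\varepsilon$ along and absorb $\varepsilon^m$ into the orientation of $Y$.

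Then comes the computation. The $2$-forms $\eta_k:=e^k\wedge f^k$ have even degree, hence commute, and satisfy $\eta_k\wedge\eta_k=0$, so the multinomial theorem gives $w^{\wedge m}/m!=\varepsilon^m\sum_{\btg{I}=m}\bigwedge_{k\in I}\eta_k$, the sum running over the $m$-element subsets $I\subset\{1,\dots,n\}$. Pulling back along $i_Y$, for every $k>m$ both $e^k$ and $f^k$ vanish on $T_pY$, so each summand with $I\neq\{1,\dots,m\}$ restricts to zero, leaving $i_Y^*\!\br{w^{\wedge m}/m!}_p=\varepsilon^m\, e^1\wedge f^1\wedge\cdots\wedge e^m\wedge f^m$. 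Since $\br{e_1,Je_1,\dots,e_m,Je_m}$ is an orthonormal basis of $T_pY$ which is positively oriented for the canonical orientation induced by $J$ on the complex manifold $Y$, the form $e^1\wedge f^1\wedge\cdots\wedge e^m\wedge f^m$ is precisely $\mu_Y$ at $p$, and since $p$ was arbitrary the identity follows.

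The main obstacle is not analytic — there is none, the assertion being local and algebraic once the $J$-invariance of $T_pY$ is recorded — but bookkeeping of signs and orientations. Because the paper's convention $w(U,V)=g(U,JV)$ differs by an overall sign from the frequently used $w(U,V)=g(JU,V)$, one must check that $\varepsilon^m$ is absorbed correctly, i.e. that the orientation meant by the symbol $\mu_Y$ is exactly the one for which $w^{\wedge m}/m!$ is positive; this is the only point that requires care, and it amounts to fixing once and for all the orientation convention on complex (sub)manifolds. Alternatively, the pointwise computation is carried out in \cite{Bal06, Mo10}.
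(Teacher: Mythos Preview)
Your argument is correct and is the standard linear-algebra proof of Wirtinger's formula: reduce to a point, use $J$-invariance of $T_pY$ to pick a unitary frame adapted to $Y$, express $w$ as $\varepsilon\sum e^k\wedge f^k$, expand $w^{\wedge m}/m!$ multinomially, and observe that only the term supported on $T_pY$ survives the pullback. The sign/orientation bookkeeping you flag is indeed the only subtle point, and you handle it appropriately.

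The paper itself does not give a proof at all; it simply cites \cite{GrHa94}, page 31. The argument there is essentially the one you wrote, so your proposal is not a different route but rather a spelled-out version of the cited reference. If anything, your write-up is more explicit about the role of the complex-submanifold hypothesis (the $J$-invariance of $T_pY$) than the terse citation, which is helpful.
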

\begin{proof}
See \cite{GrHa94}, page 31.
\end{proof}

\begin{proposition}\label{prop21}{\bf (Antiholomorphic Bundle as a Dirac Bundle).}
Let $(X,g, J)$ be a K\"ahler manifold of  real dimension $2n$
with Riemannian metric $g$ and
almost complex structure $J\in\Hom(TX)$ satisfying $J^2=-\mathbb{1}$.  The antiholomorphic bundle can be
seen as a Dirac bundle $(V,\langle,\cdot,\cdot\rangle,\nabla)$ with the following choices:
\begin{itemize}
\item $V:=\Lambda(T^{0,1}X)^*$: antiholomorphic bundle over $X$.
\item $\langle\cdot,\cdot\rangle:=g^{\Lambda(T^{0,1}X)^*}$.
\item $\nabla:=\nabla^{g^{\Lambda(T^{0,1}X)^*}}$.
\item By means of interior and exterior multiplication, by utilizing the decomposition of $TX$ with respect to the $\pm\imath$-eigenspaces of $J$, we can define
 \begin{equation}
        \begin{split}
        \gamma&:
            \begin{array}{lll}
                TX=TX^{1,0}\oplus TX^{0,1}&\longrightarrow&\Hom(V)\\
                v=v^{1,0}\oplus
                v^{0,1}&\longmapsto&\gamma(v):=\sqrt{2}(\exte(v^{1,0})-\inte(v^{0,1})).
            \end{array}
        \end{split}
    \end{equation}
     Since $\gamma^2(v)=-g(v,v)\mathbb{1}$, by the universal property, the map $\gamma$ extends uniquely to a real algebra bundle endomorphism $\gamma:\Cl(X,g)\longrightarrow\Hom(V)$.
\end{itemize}
The \textbf{Dolbeault operators} $\partial$ and $\bar{\partial}$ have formal adjoints satisfying
$\partial^*=-\bar{*}\partial\bar{*}$ and $\bar{\partial}^*=-\bar{*}\bar{\partial}\bar{*}$,
where $\bar{*}$ is the \textbf{conjugate-linear Hodge star operator} fulfilling $\bar{*}\bar{*}=(-1)^{p+q}$ on $\Omega^{p,q}(X,\mathbf{C})$.
The Dirac operator $Q$ in the case of antiholomorphic bundles over
K\"ahler manifolds $(X,g, \Omega, J)$ is the \textbf{Dirac-Dolbeault operator}
$\sqrt{2}(\overline{\partial}+\overline{\partial}^*)$, while the
Dirac Laplacian $P:=Q^2$ is the \textbf{Hodge-Kodaira Laplacian}
$\Delta_{\bar{\partial}}:=2(\overline{\partial}\overline{\partial}^*+\overline{\partial}^*\overline{\partial})$.\par
The cohomology group of $X$ with complex coefficients lie in degrees $0$ through $2n$ and there is a decomposition
\begin{equation}
H^k(X,\mathbf{C})=\bigoplus_{p+q=k}H^{p,q}(X,\mathbf{C}),
\end{equation}
where $H^{p,q}(X,\mathbf{C})$ is the subgroup of cohomology classes represented by harmonic forms of type $(p,q)$, termed \textbf{Dolbeault cohomology}.
\end{proposition}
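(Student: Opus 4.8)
The plan is to verify the six defining conditions of a Dirac bundle for the quadruple $(V,\langle\cdot,\cdot\rangle,\nabla,\gamma)$, then to carry out one local computation that identifies the Dirac operator and its square, and finally to invoke the Kähler identities and the Hodge theorem for the cohomological decomposition.

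\textbf{The Dirac-bundle conditions.} Condition (1) is immediate: $V=\Lambda(T^{0,1}X)^*=\bigoplus_{q=0}^{n}\Omega^{0,q}(X)$ is a complex vector bundle over $(X,g)$ carrying the fiberwise Hermitian structure $g^{\Lambda(T^{0,1}X)^*}$. For (2) I would use the Kähler hypothesis $\nabla^{g}J=0$, which forces $\nabla^{g}$ to preserve the splitting $TX\otimes\mathbf{C}=T^{1,0}X\oplus T^{0,1}X$ and hence induces a genuine connection $\nabla=\nabla^{g^{\Lambda(T^{0,1}X)^*}}$ on the subbundle $V$. For (3) I would compute, for a real tangent vector $v=v^{1,0}+v^{0,1}$ with $v^{0,1}=\overline{v^{1,0}}$,
\[
\gamma(v)^2=2\bigl(\exte(v^{1,0})-\inte(v^{0,1})\bigr)^2=-2\bigl(\exte(v^{1,0})\inte(v^{0,1})+\inte(v^{0,1})\exte(v^{1,0})\bigr)=-g(v,v)\,\mathbb{1},
\]
using $\exte(v^{1,0})^2=\inte(v^{0,1})^2=0$ and the anticommutation rule $\exte(\alpha)\inte(w)+\inte(w)\exte(\alpha)=\alpha(w)\mathbb{1}$; the universal property of the Clifford algebra then extends $\gamma$ to a real algebra bundle homomorphism $\Cl(X,g)\to\Hom(V)$. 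Condition (4) holds because interior multiplication is the fiberwise adjoint of the corresponding exterior multiplication and $\overline{v^{1,0}}=v^{0,1}$, so $\gamma(v)^*=-\gamma(v)$ for real $v$. Condition (5) holds because $\nabla$, being induced by the metric connection $\nabla^{g}$, is itself metric. Condition (6) again uses $\nabla^{g}J=0$, which makes the eigenbundle decomposition of $TX\otimes\mathbf{C}$ and the musical isomorphisms $\nabla$-parallel, so that exterior and interior multiplication commute with $\nabla$, giving $\nabla\gamma=0$.

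\textbf{Dolbeault adjoints, the Dirac operator and its square.} I would next record the identities $\partial^*=-\bar{*}\,\partial\,\bar{*}$, $\bar{\partial}^*=-\bar{*}\,\bar{\partial}\,\bar{*}$ and $\bar{*}\bar{*}=(-1)^{p+q}$ on $\Omega^{p,q}(X,\mathbf{C})$, which follow from applying Stokes' theorem to $d(\alpha\wedge\bar{*}\beta)$ together with the standard bidegree bookkeeping (see \cite{GrHa94}). Then I would write the composition defining $Q$ in a local $g$-orthonormal frame $\{e_a\}$ of $TX$ as $Q=\sum_{a}\gamma(e_a)\nabla_{e_a}$, pass to an associated local unitary frame of $T^{1,0}X$, and split the sum into its exterior- and interior-multiplication parts. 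The key input is that on a Kähler manifold the restriction of $\nabla^{g}$ to $V$ coincides with the Chern connection of the Hermitian holomorphic bundle $\Lambda(T^{0,1}X)^*$ — equivalently, $\nabla^{g}$ is torsion-free and $J$-compatible — so that its $(0,1)$-part is $\bar\partial$; consequently the exterior-multiplication part of $\sum_{a}\gamma(e_a)\nabla_{e_a}$ reproduces $\sqrt2\,\bar\partial$, while the interior-multiplication part, being its formal $L^2$-adjoint, reproduces $\sqrt2\,\bar\partial^*$, so $Q=\sqrt2(\bar\partial+\bar\partial^*)$. Squaring and using $\bar\partial^2=0=(\bar\partial^*)^2$ then gives $P=Q^2=2(\bar\partial\bar\partial^*+\bar\partial^*\bar\partial)=\Delta_{\bar\partial}$. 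I expect this local identification to be the main obstacle: one must verify carefully that the $(0,1)$-component of the Levi-Civita-induced connection on $V$ is exactly $\bar\partial$, which is once more where the torsion-freeness of $\nabla^{g}$ and its compatibility with $J$ — i.e. the Kähler condition — enter, and which is precisely what makes the Dirac operator of the antiholomorphic bundle coincide with the Dirac--Dolbeault operator.

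\textbf{The Hodge decomposition.} For the last assertion I would invoke the Kähler identities relating $\partial^*$ and $\bar\partial^*$ to the adjoint $\Lambda$ of the Lefschetz operator $w\wedge\cdot$; they yield $\Delta_d=2(\bar\partial\bar\partial^*+\bar\partial^*\bar\partial)=\Delta_{\bar\partial}$, where $\Delta_d$ denotes the Hodge--de Rham Laplacian. Since this operator preserves bidegree, the finite-dimensional space $\mathcal{H}^k$ of $\Delta_d$-harmonic $k$-forms decomposes as $\bigoplus_{p+q=k}\mathcal{H}^{p,q}$, with $\mathcal{H}^{p,q}$ the space of $\Delta_{\bar\partial}$-harmonic forms of type $(p,q)$. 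The Hodge theorem for the elliptic operators $\Delta_d$ and $P$ finally identifies $\mathcal{H}^k\cong H^k(X,\mathbf{C})$ and $\mathcal{H}^{p,q}\cong H^{p,q}(X,\mathbf{C})$, giving the asserted decomposition $H^k(X,\mathbf{C})=\bigoplus_{p+q=k}H^{p,q}(X,\mathbf{C})$.
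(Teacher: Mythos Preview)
Your proposal is correct and essentially supplies the content that the paper defers to a reference: the paper's own ``proof'' consists only of the citation ``See f.i.\ Chapters 3.5 and 3.6 of \cite{Gi84}.'' Your verification of the six Dirac-bundle axioms, the local identification $Q=\sqrt{2}(\bar\partial+\bar\partial^{*})$ via the fact that the Levi--Civita connection restricts to the Chern connection on a K\"ahler manifold, and the appeal to the K\"ahler identities and Hodge theory for the decomposition, are exactly the standard arguments one finds in Gilkey and related sources, so there is no divergence in approach to discuss.
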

\begin{proof}
See f.i. Chapters 3.5 and 3.6 of \cite{Gi84}.
\end{proof}

\begin{theorem}[\textbf{Lefschetz Decomposition on Cohomology}]\label{Lefschetz} Let $X$ be a $n$ complex dimensional compact K\"ahler manifold with K\"ahler form $w$, and
for any $k=0,\dots,2n-2$
\begin{equation}
\begin{split}
L:\Omega^k(X,\mathbf{C})&\rightarrow\Omega^{k+2}(X,\mathbf{C})\\
\alpha&\mapsto L\alpha:=w\wedge\alpha.
\end{split}
\end{equation}
Then, $L$ defines an operator
\begin{equation}
\begin{split}
L:H^k(X,\mathbf{C})&\rightarrow H^{k+2}(X,\mathbf{C})\\
[\alpha]&\mapsto L[\alpha]:=[w\wedge\alpha],
\end{split}
\end{equation}
such that, for any $r\le n$
\begin{equation}
L^r:H^k(X,\mathbf{C})\rightarrow H^{k+2r}(X,\mathbf{C})
\end{equation}
is an isomorphism. Moreover, every cohomology class $[\alpha]\in H^k(X,\mathbf{C})$ admits a unique decomposition
\begin{equation}
[\alpha]=\sum_rL^r[\alpha_r],
\end{equation}
where $\alpha_r$ is of degree $k-2r\le\min\left(n,2n-k\right)$ and $L^{n-k+2r+2}[\alpha_r]=[0]\in H^{2n-k+2r+1}(X,\mathbf{C})$.
\end{theorem}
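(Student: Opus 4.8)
The plan is to deduce all three assertions from a single structural fact: the space of harmonic forms on $X$ carries a finite-dimensional representation of $\mathfrak{sl}_2(\mathbf{C})$ built from $L$ and its adjoint. First I would verify that $L$ is well defined on cohomology. Since $w$ is closed and of even degree, $d(w\wedge\alpha)=w\wedge d\alpha$, so $\alpha\mapsto w\wedge\alpha$ carries closed forms to closed forms and exact forms to exact forms, and therefore descends to $L:H^k(X,\mathbf{C})\to H^{k+2}(X,\mathbf{C})$. Next, by Hodge theory on the compact manifold $X$ every de Rham class has a unique $\Delta_d$-harmonic representative, where $\Delta_d:=dd^{*}+d^{*}d$; for a Kähler metric one has $\Delta_d=\Delta_{\bar{\partial}}$ with $\Delta_{\bar{\partial}}$ the Hodge--Kodaira Laplacian of Proposition \ref{prop21} (this is precisely why the factor $\sqrt 2$ appears in the Dirac--Dolbeault operator). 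Hence it suffices to prove the statements on the finite-dimensional graded vector space $\mathcal H^{\bullet}=\bigoplus_{k}\mathcal H^{k}$ of harmonic forms, with $L$ acting as honest wedging by $w$.

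Then I would bring in the companion operators: the pointwise adjoint $\Lambda:=L^{*}$ (contraction with $w$) and the operator $\mathcal B$ acting on $\mathcal H^{k}$ as multiplication by $k-n$. The commutators $[\mathcal B,L]=2L$ and $[\mathcal B,\Lambda]=-2\Lambda$ are mere degree bookkeeping, while $[L,\Lambda]=\mathcal B$ is the algebraic Kähler identity, proved fibrewise by linear algebra on the Hermitian space $(T_xX,g_x,J_x)$ (no integrability needed). The essential analytic ingredient is that $L$, and hence $\Lambda$, commutes with the Laplacian: $[\Delta_d,L]=0$. This I would obtain from the Kähler identities $[\Lambda,\bar{\partial}]=-\imath\,\partial^{*}$ and $[\Lambda,\partial]=\imath\,\bar{\partial}^{*}$, established by comparing $g$ with the flat model metric to second order at an arbitrary point (cf. \cite{Vo10}); this is exactly the step at which the Kähler hypothesis $\nabla^{g}J=0$ (equivalently $dw=0$ with $J$ integrable) is genuinely used, and it is the main obstacle of the whole argument — without it the $\mathfrak{sl}_2$-action would not preserve harmonicity. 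Granting this, $L$, $\Lambda$, $\mathcal B$ preserve $\mathcal H^{\bullet}$ and make it a finite-dimensional $\mathfrak{sl}_2(\mathbf{C})$-module.

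Finally I would invoke the representation theory of $\mathfrak{sl}_2(\mathbf{C})$. Decomposing $\mathcal H^{\bullet}$ into irreducible strings, each generated by a primitive vector annihilated by $\Lambda$, and noting that $\mathcal H^{k}$ is the $\mathcal B$-weight space of weight $k-n$, one reads off that $L^{n-k}:\mathcal H^{k}\to\mathcal H^{2n-k}$ is an isomorphism for $0\le k\le n$, and more generally that the relevant powers of $L$ are injective up to the middle degree and surjective beyond it, which is the isomorphism assertion. Splitting a given harmonic $k$-form along the strings yields the unique expansion $\alpha=\sum_{r}L^{r}\alpha_{r}$ with $\alpha_{r}$ a primitive harmonic form of degree $k-2r$; primitivity means $\Lambda\alpha_{r}=0$, equivalently $L^{\,n-(k-2r)+1}\alpha_{r}=0$, so a fortiori $L^{\,n-k+2r+2}[\alpha_{r}]=0$, and the condition $k-2r\le\min(n,2n-k)$ is simply the range of degrees in which such primitive contributions can occur. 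Transporting the decomposition back through the Hodge isomorphism $\mathcal H^{k}\cong H^{k}(X,\mathbf{C})$ yields the stated decomposition on cohomology, completing the proof.
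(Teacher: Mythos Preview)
Your proof sketch is correct and follows the standard $\mathfrak{sl}_2(\mathbf{C})$-representation argument; the paper itself gives no proof but simply refers to Theorem 6.25, Corollary 6.26 and Remark 6.27 in \cite{Vo10}, which is precisely the argument you outline. Your handling of the primitivity condition is also accurate: primitivity gives $L^{\,n-k+2r+1}[\alpha_r]=0$, so the paper's weaker condition $L^{\,n-k+2r+2}[\alpha_r]=0$ follows a fortiori, exactly as you note.
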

\begin{proof} See Theorem 6.25, Corollary 6.26 and Remark 6.27 in \cite{Vo10}.
\end{proof}
\begin{proposition}\label{starrep}
With the same assumptions as Theorem \ref{Lefschetz}, for $\alpha\in\Omega^{p,q}(X,\mathbf{C})$ such that $L^{n-k+1}\alpha=0$ for $k:=p+q$, then
\begin{equation}
\bar{*}\alpha=(-1)^{\frac{k(k+1)}{2}}\imath^{p-q}\frac{L^{n-k}}{\left(n-k\right)!}\alpha.
\end{equation}
\end{proposition}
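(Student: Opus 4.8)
The plan is to recognise this as the pointwise form of \emph{Weil's identity} for the conjugate-linear Hodge star on primitive forms, and to prove it by descending to a fibrewise statement in Hermitian linear algebra. Both sides are $\mathbf{R}$-tensorial in $\alpha$: the map $\alpha\mapsto L^{n-k}\alpha=w^{\wedge(n-k)}\wedge\alpha$ is $C^\infty(X)$-linear, and $\bar{*}$ is the Riemannian Hodge star followed by complex conjugation, both built pointwise from $g$, $J$ and the orientation. Hence the claimed equality is an equality of bundle maps on the sub-bundle of primitive $(p,q)$-forms, and it holds on $X$ if and only if it holds in every fibre. Fix $x\in X$ and work in $W:=\Lambda^{\bullet}(T^*_xX\otimes\mathbf{C})$ with the operators $L$, its adjoint $\Lambda:=L^{*}$ and $H:=[L,\Lambda]$; one may assume $k\le n$, the only range in which the hypothesis is non-vacuous.

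First I would install the Lefschetz $\mathfrak{sl}_{2}$-structure underlying Theorem \ref{Lefschetz}: $H$ acts as $(j-n)\,\mathrm{id}$ on $\Lambda^{j}$, so $(L,\Lambda,H)$ is an $\mathfrak{sl}_{2}$-triple and $W$ is a sum of primitive strings $L^{r}P^{j}$. From this one checks that, for $\deg\alpha=k$, the hypothesis $L^{\,n-k+1}\alpha=0$ is equivalent to primitivity $\Lambda\alpha=0$: writing $\alpha=\sum_{r\ge0}L^{r}\beta_{r}$ with $\beta_{r}$ primitive of degree $k-2r$, one has $L^{\,n-k+1}\alpha=\sum_{r\ge1}L^{\,n-k+1+r}\beta_{r}$ (the $r=0$ term vanishes since $\beta_{0}$ generates a string of length $n-k+1$), and for $r\ge1$ these summands are nonzero whenever $\beta_{r}\ne0$ and have distinct primitive parts, hence cannot cancel; so $\beta_{r}=0$ for $r\ge1$ and $\alpha=\beta_{0}$ is primitive. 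In particular $\alpha$ is a lowest-weight vector of $H$-weight $k-n$, generating an irreducible summand of dimension $n-k+1$ whose highest-weight vector is $L^{\,n-k}\alpha$.

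Next I would use that the Hodge star exchanges $L$ and $\Lambda$ up to an explicit sign (a Kähler identity of the form $\bar{*}\Lambda=\pm L\bar{*}$), so that $\bar{*}$ sends primitive strings to strings traversed in the opposite direction. In particular $L(\bar{*}\alpha)=\pm\,\bar{*}(\Lambda\alpha)=0$, so $\bar{*}\alpha$ is a highest-weight vector in $\Lambda^{2n-k}$; the space of such vectors is exactly $L^{\,n-k}P^{k}$, whence $\bar{*}\alpha=L^{\,n-k}\eta$ for a unique primitive $\eta$, which by matching bidegrees has type $(q,p)$. Since $P^{p,q}$ is an irreducible $U(n)$-module while $\bar{*}$, $L$ and conjugation are all (conjugate-)$U(n)$-equivariant, Schur's lemma forces the conjugate-linear assignment $\alpha\mapsto\eta$ to be a fixed scalar multiple of $\alpha\mapsto\bar\alpha$; that is, $\bar{*}\alpha=c\,L^{\,n-k}\bar\alpha$ with $c=c_{p,q,n}$ independent of $\alpha$ (when $p=q$, the case used in Proposition \ref{PropKey}, one reads $\bar\alpha=\alpha$). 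It remains to pin $c$ down on one explicit primitive form: in a unitary coframe $\theta^{1},\dots,\theta^{n}$ of $(T^*_xX)^{1,0}$ with $w=\imath\sum_{j}\theta^{j}\wedge\bar\theta^{j}$, apply the identity to a monomial $\theta^{I}\wedge\bar\theta^{J}$ with $|I|=p$, $|J|=q$ and $I\cap J=\emptyset$ (such monomials are killed by $\Lambda$, hence primitive); a direct evaluation of $\bar{*}$ and of $L^{\,n-k}$ on it — the latter wedging in the $n-k$ absent diagonal pairs $\theta^{m}\wedge\bar\theta^{m}$ and so producing the factor $(n-k)!$ — gives $c=(-1)^{k(k+1)/2}\,\imath^{\,p-q}/(n-k)!$, which is the assertion.

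The main obstacle is the sign and phase bookkeeping in this last step: producing $(-1)^{k(k+1)/2}$ and the Weil phase $\imath^{p-q}$ correctly requires careful control of the ordering of the $\theta$'s and $\bar\theta$'s and of the normalisation $\bar{*}\bar{*}=(-1)^{p+q}$ (the source of the conjugate-linearity), and this is also where the two sides get reconciled in bidegree — so that for $p\ne q$ the right-hand side has to be read with $\bar\alpha$ in place of $\alpha$. The Schur/equivariance step that propagates $c$ from one monomial to all of $P^{p,q}$, and the final descent from fibres to $X$, are then routine.
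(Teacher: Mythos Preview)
The paper does not give a proof at all: its entire argument is the citation ``See Proposition 6.29 in \cite{Vo10}''. Your proposal, by contrast, sketches the actual proof of Weil's identity --- reduction to the fibre, the $\mathfrak{sl}_2$-triple $(L,\Lambda,H)$, the equivalence of $L^{n-k+1}\alpha=0$ with primitivity, the observation that $\bar{*}$ intertwines highest- and lowest-weight vectors, Schur's lemma on the irreducible $U(n)$-module $P^{p,q}$, and the computation of the scalar on a disjoint monomial $\theta^I\wedge\bar\theta^J$. This is precisely the route Voisin takes in the cited reference, so in effect you have reconstructed the proof the paper outsources.

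One point you flag deserves emphasis: the bidegree of $\bar{*}\alpha$ for $\alpha\in\Omega^{p,q}$ is $(n-p,n-q)$ (since $\bar{*}$ is conjugate-linear), whereas $L^{n-k}\alpha$ lies in $\Omega^{n-q,n-p}$. Voisin's Proposition 6.29 is stated for the $\mathbf{C}$-linear Hodge star $*$, for which both sides have bidegree $(n-q,n-p)$; the paper has silently swapped $*$ for $\bar{*}$, which is harmless in the only case it later uses ($p=q$, in Proposition \ref{close}) but makes the displayed formula literally false in bidegree when $p\ne q$. Your remark that for $p\ne q$ one must read $\bar\alpha$ on the right is exactly the fix. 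This is a transcription issue in the paper, not a flaw in your argument.
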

\begin{proof}
See Proposition 6.29 in \cite{Vo10}.
\end{proof}

\begin{defi} If the compact K\"ahler manifold $X$ is boundaryless, by De Rham's theorem, we can define a scalar product for $H^{k,k}(X,\mathbf{C})$ by means of the expression
\begin{equation}\label{sp}
([\alpha], [\omega]):=\int_X\alpha\wedge\overline{\ast}\omega,
\end{equation}
where $\alpha,\omega\in\ker(\Delta^{k,k}_{\bar{\partial}})$ are the unique harmonic representatives for the cohomology classes
$[\alpha],[\omega]\in H^{k,k}(X,\mathbf{C})\cong\ker(\Delta^{k,k}_{\bar{\partial}})$. By Riesz's Lemma,
\begin{equation}
H^{k,k}(X,\mathbf{C})^*=H^{k,k}(X,\mathbf{C}),
\end{equation}
where the isomorphism is induced by the scalar product in (\ref{sp})
\begin{equation}
\Omega^{k,k}(X,\mathbf{C})\mathrel{\mathop{\rightleftarrows}^{\mathrm{\flat}}_{\mathrm{\sharp}}}\Omega^{k,k}(X,\mathbf{C})^*\qquad
  H^{k,k}(X,\mathbf{C})\mathrel{\mathop{\rightleftarrows}^{\mathrm{\flat}}_{\mathrm{\sharp}}} H^{k,k}(X,\mathbf{C})^*.
\end{equation}
\noindent  The map
\begin{equation}
i_Z:Z\rightarrow X
\end{equation}
denotes the embedding of any complex submanifold $Z$ into $X$ and
\begin{equation}
\begin{split}
i_Z^*:\Omega^{k,k}(X,\mathbf{C})&\rightarrow \Omega^{k,k}(Z,\mathbf{C})\\
\alpha&\mapsto i_Z^*\alpha:=\alpha(\underbrace{Ti_Z.(\cdot),\dots,Ti_Z.(\cdot)}_{2k\text{ times}})
\end{split}
\end{equation}
the \textbf{pull back} of $(k,k)$-forms on $X$ on $(k,k)$-forms on $Z$.
\end{defi}

\begin{rem} For any topological space $X$ and for $\mathbf{F}\in\{\mathbf{Z},\mathbf{Q},\mathbf{R},\mathbf{C}\}$, the \textbf{singular $\mathbf{F}$-homology} $H_p(X,\mathbf{F})$ is the homology of
the $\mathbf{F}$-chains, and the \textbf{singular $\mathbf{F}$-cohomology} $H^p(X,\mathbf{F})$ is the homology of
the $\mathbf{F}$-cochains, see \cite{BoTu82}, chapter III.15. If $X$ is a differentiable real manifold, by the De Rham theorem the singular cohomology and the De Rham cohomology with real or complex cofficients are
isomorphic and for $\omega\in\Omega^p(X,\mathbf{C})$
\begin{equation}\label{char}
[\omega]\in H^p(X,\mathbf{F})\Longleftrightarrow\int_Yi_Y^*\omega\in\mathbf{F}\text{ for all oriented real submanifolds }Y\subset X\text{ such that }[Y]\in H_p(X,\mathbf{F}),
\end{equation}
where $i_Y$ is the injection of $Y$ in $X$. Note that $H^p(X,\mathbf{F})$ in (\ref{char})
denotes the image of the singular cohomology in the De Rham cohomology. If $X$ is a complex manifold, any complex submanifold $Y$ has a natural orientation.
\end{rem}

\begin{defi} Let $X$ be a K\"ahler manifold of complex dimension $n$. For $k=0,\dots,n$ the \textbf{rational Hodge class of degree $2k$} on $X$ is defined as
\begin{equation}
\text{Hdg}^k(X,\mathbf{Q}):=H^{2k}(X,\mathbf{Q})\cap H^{k,k}(X,\mathbf{C}).
\end{equation}
For $k=0,\dots,n$ the \textbf{integer Hodge class of degree $2k$} on $X$ is defined as
\begin{equation}
\text{Hdg}^k(X,\mathbf{Z}):=H^{2k}(X,\mathbf{Z})\cap H^{k,k}(X,\mathbf{C}).
\end{equation}
\end{defi}

\begin{theorem}[\textbf{Kodaira’s criterion}]\label{Kod} A compact complex manifold $X$ is projective
if and only if $X$ admits an integer K\"ahler class $[w]$, that is, belonging to $H^2(X,\mathbf{Z})$.
\end{theorem}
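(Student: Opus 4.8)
The plan is to prove the two implications separately. The forward direction is elementary; the converse is essentially the Kodaira embedding theorem and carries all the weight.

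For the forward direction, suppose $X$ is projective, so by Corollary \ref{subvar} it is realized as a complex submanifold $i_X\colon X\hookrightarrow\mathbf{C}P^N$. Put $w:=i_X^*w^{\mathrm{FS}}$, the pull-back of the Fubini--Study K\"ahler form; by Remark \ref{remFS} (complex submanifolds of $\mathbf{C}P^N$ are K\"ahler) this is a K\"ahler form on $X$. Since the Fubini--Study class generates $H^2(\mathbf{C}P^N,\mathbf{Z})\cong\mathbf{Z}$ and pull-back commutes with the coefficient map $\mathbf{Z}\to\mathbf{R}$, the class $[w]=i_X^*[w^{\mathrm{FS}}]$ lies in the image of $H^2(X,\mathbf{Z})$, so $X$ has an integer K\"ahler class.

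For the converse, assume $[w]\in H^2(X,\mathbf{Z})$ is a K\"ahler class; being represented by the real $(1,1)$-form of a K\"ahler metric, it also lies in $H^{1,1}(X,\mathbf{C})$ for the Hodge decomposition of Proposition \ref{prop21}. First I would exhibit a holomorphic line bundle $L$ with $c_1(L)=[w]$: from the exponential sheaf sequence $0\to\mathbf{Z}\to\mathcal{O}_X\to\mathcal{O}_X^*\to0$ and its long exact cohomology sequence $H^1(X,\mathcal{O}_X^*)\xrightarrow{c_1}H^2(X,\mathbf{Z})\to H^2(X,\mathcal{O}_X)$, the image of $[w]$ in $H^2(X,\mathcal{O}_X)\cong H^{0,2}(X,\mathbf{C})$ vanishes, because the $(0,2)$-component of a class of type $(1,1)$ is zero. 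Hence $[w]=c_1(L)$ for some $L\in H^1(X,\mathcal{O}_X^*)=\mathrm{Pic}(X)$, and since $c_1(L)$ is represented by the positive form $w$, the bundle $L$ is positive.

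The decisive step is to pass from positivity of $L$ to an actual embedding. Here I would invoke the Kodaira vanishing theorem, $H^q(X,\mathcal{O}(L^{\otimes m}\otimes K_X))=0$ for $q\ge1$ and $m$ large, together with the classical blow-up argument — blow up a point to obtain base-point freeness, blow up two points (and pass to a doubled point) to separate points and tangent vectors, applying vanishing on the blow-up — to conclude that for $m\gg0$ the complete linear system $|L^{\otimes m}|$ defines a holomorphic embedding $\iota_m\colon X\hookrightarrow\mathbf{C}P^{N_m}$ with $N_m+1=\dim H^0(X,\mathcal{O}(L^{\otimes m}))$. The image $\iota_m(X)$ is a compact complex submanifold of projective space, hence by Chow's theorem a projective algebraic variety, so $X$ is projective. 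The main obstacle is exactly this last step — forcing sections of a high power of a positive bundle to separate points and jets — whose engine is Kodaira vanishing plus the blow-up trick (equivalently H\"ormander's $L^2$-estimates for $\bar\partial$); the forward direction, the exponential-sequence lift, and the appeal to Chow are routine given the results already recorded in the excerpt.
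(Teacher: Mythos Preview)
Your proof sketch is correct and follows the standard route to Kodaira's embedding theorem: pull back the Fubini--Study form for the easy direction, and for the converse lift the integral $(1,1)$-class to a positive line bundle via the exponential sequence, then use Kodaira vanishing together with the blow-up argument to show a high tensor power is very ample. This is exactly the argument one finds in the references the paper cites.

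However, you should be aware that the paper does not actually give a proof of this statement: its ``proof'' consists solely of the line \emph{See \cite{Ko54} or \cite{We08} Theorem VI.4.1 and Example VI.1.2}. So there is nothing substantive to compare against --- the paper treats Kodaira's criterion as a black-box input, while you have supplied the outline of an actual argument. Your sketch goes well beyond what the paper provides, and is in line with the proofs in the cited sources.
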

\begin{proof}
See \cite{Ko54} or \cite{We08} Theorem VI.4.1 and Example VI.1.2.
\end{proof}
\begin{proposition}\label{close}
Let $X$ be a K\"ahler manifold of complex dimension $n$ and $\mathbf{F}\in\{\mathbf{Z},\mathbf{Q},\mathbf{R},\mathbf{C}\}$.
For $k, l\in\{0,\dots,n\}$ and $[\omega]\in H^{k,k}(X,\mathbf{F})$, $[\eta]\in H^{l,l}(X,\mathbf{F})$, $f_n(j):=j1_{\{0,\dots,n\}}(j)$
\begin{equation}\label{wedgeref}
[\omega\wedge\eta]\in H^{f_n(k+l), f_n(k+l)}(X,\mathbf{F})
\end{equation}
holds true. Moreover, if $X$ is a complex projective manifold and $\mathbf{F}\in\{\mathbf{Q},\mathbf{R},\mathbf{C}\}$, we have
\begin{equation}\label{starref}
[\bar{*}\omega]\in H^{n-k, n-k}(X,\mathbf{F}).
\end{equation}
\end{proposition}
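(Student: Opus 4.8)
The plan is to prove Proposition~\ref{close} in two parts, corresponding to the two assertions \eqref{wedgeref} and \eqref{starref}. For the first, I would begin with the observation that wedge product on forms descends to cup product on De Rham cohomology, so the class $[\omega\wedge\eta]$ is well defined and depends only on $[\omega]$ and $[\eta]$. To see that it lands in $H^{p,p}$ with $p=f_n(k+l)$, note that if $k+l>n$ then $[\omega\wedge\eta]$ lies in $H^{2(k+l)}(X,\mathbf{C})$ which vanishes (the manifold has real dimension $2n$), so the claim holds trivially with the convention $f_n(k+l)=0$; and if $k+l\le n$, choose harmonic representatives of type $(k,k)$ and $(l,l)$ — these exist on a compact K\"ahler manifold — whence $\omega\wedge\eta$ is a closed form of type $(k+l,k+l)$, so its class lies in $H^{k+l,k+l}(X,\mathbf{C})$ by the Hodge decomposition recalled in Proposition~\ref{prop21}. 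For the $\mathbf{F}$-rationality (i.e.\ that the class lies in the image of $H^{2(k+l)}(X,\mathbf{F})$), I would invoke the characterization \eqref{char}: integration of $\omega\wedge\eta$ over an oriented real cycle pairs, via Poincar\'e duality, against the $\mathbf{F}$-classes of $[\omega]$ and $[\eta]$, and cup product of $\mathbf{F}$-cohomology classes is an $\mathbf{F}$-class. So $[\omega\wedge\eta]\in H^{p,p}(X,\mathbf{F})$, proving \eqref{wedgeref}.

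For the second assertion \eqref{starref}, I would use the Lefschetz decomposition (Theorem~\ref{Lefschetz}) together with the explicit formula for $\bar{*}$ on primitive classes (Proposition~\ref{starrep}). Write $[\omega]=\sum_r L^r[\omega_r]$ with each $[\omega_r]$ primitive of degree $2k-2r$; since $\omega$ has pure type $(k,k)$ and $L$ preserves bidegree up to $(1,1)$, each primitive component $[\omega_r]$ may be taken of type $(k-r,k-r)$. Now $\bar{*}$ commutes with $L$ up to explicit scalar and power-of-$L$ factors — more precisely, applying $\bar{*}$ to $L^r[\omega_r]$ and using the K\"ahler identities relating $\bar{*}$, $L$, and the adjoint $\Lambda$, together with Proposition~\ref{starrep} applied to the primitive class $[\omega_r]$, expresses $\bar{*}L^r[\omega_r]$ as a scalar multiple of $L^{n-k+r}[\omega_r]$, which is a class in $H^{(n-k)+(k-r)+r,\,\dots}=H^{n-k,n-k}(X,\mathbf{C})$ after accounting for the $(r)$-fold and $(n-k+r)$-fold applications of $L$ to a $(k-r,k-r)$-form. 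Summing over $r$ gives $[\bar{*}\omega]\in H^{n-k,n-k}(X,\mathbf{C})$. The rationality over $\mathbf{F}\in\{\mathbf{Q},\mathbf{R},\mathbf{C}\}$ is where projectivity is used: by Kodaira's criterion (Theorem~\ref{Kod}) the K\"ahler class $[w]$ can be chosen integral, so $L$ maps $H^\bullet(X,\mathbf{F})$ to $H^{\bullet+2}(X,\mathbf{F})$; the primitive components $[\omega_r]$ of an $\mathbf{F}$-class are themselves $\mathbf{F}$-classes because the Lefschetz decomposition is defined over $\mathbf{Q}$ (it is cut out by the integral operators $L$ and its iterates); and the scalars $(-1)^{k(k+1)/2}\imath^{p-q}/(n-k)!$ appearing in Proposition~\ref{starrep} are real here since $p=q$, hence harmless. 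Therefore $[\bar{*}\omega]\in H^{n-k,n-k}(X,\mathbf{F})$.

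I expect the main obstacle to be the bookkeeping in the second part: one must verify carefully that the primitive decomposition of a pure-type class consists of pure-type primitive classes, that $\bar{*}$ interacts with the Lefschetz operator exactly as Proposition~\ref{starrep} dictates on each graded piece (in particular getting the degree shifts right so that every summand genuinely lands in bidegree $(n-k,n-k)$ and not some other $(p,q)$ with $p+q=2n-2k$), and that the rational structure is preserved at every stage. The first part is essentially formal once one recalls that cup product respects the Hodge and rational structures. A subtlety worth flagging explicitly is the convention encoded in $f_n$: when $k+l>n$ the target group is $H^{0,0}$, which is consistent only because $H^{2(k+l)}(X,\mathbf{C})=0$ forces $[\omega\wedge\eta]=0=[\,\text{the zero class in }H^{0,0}]$; I would state this convention explicitly at the start of the proof to avoid any ambiguity. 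Finally, I would remark that projectivity is genuinely needed for \eqref{starref} — on a general compact K\"ahler manifold the K\"ahler class need not be rational, so $L$ need not preserve rational cohomology, and the argument breaks down, consistent with the known failure of the Hodge conjecture in the K\"ahler category.
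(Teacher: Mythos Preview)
Your approach to \eqref{starref} is essentially the paper's: both use the Lefschetz decomposition (Theorem~\ref{Lefschetz}), the explicit formula of Proposition~\ref{starrep} for $\bar{*}$ on primitive pieces, and Kodaira's criterion (Theorem~\ref{Kod}) to ensure $L$ preserves the $\mathbf{F}$-structure. The paper goes one step further and argues that only the $r=0$ summand survives after applying $\bar{*}$, whereas you keep all $r$ and show each lands in bidegree $(n-k,n-k)$; your version is the safer bookkeeping. Do note one arithmetic slip you already anticipated: the correct Weil-type formula gives $\bar{*}L^r\omega_r$ as a scalar times $L^{n-2k+r}\omega_r$ (not $L^{n-k+r}$), and applying $L^{n-2k+r}$ to the $(k-r,k-r)$-form $\omega_r$ yields bidegree $(n-k,n-k)$ as desired --- your displayed check ``$(n-k)+(k-r)+r$'' actually evaluates to $n$, not $n-k$, so that line needs correcting.

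For \eqref{wedgeref} you take a genuinely different and more economical route than the paper. The paper verifies $\mathbf{F}$-rationality by pulling back along the diagonal $\mathfrak{d}:X\to X\times X$, invoking the K\"unneth splitting to write $\mathfrak{d}_*[Y]$ as a sum of product cycles plus torsion, and then computing $\int_Y i_Y^*(\omega\wedge\eta)$ as a sum of products $\int_{S_i}\omega\cdot\int_{T_i}\eta\in\mathbf{F}$. Your argument --- that wedge product on de~Rham cohomology is the cup product, and cup product is already defined on $H^*(X,\mathbf{F})$ for any coefficient ring, hence preserves $\mathbf{F}$-rationality --- is correct and bypasses the K\"unneth machinery entirely. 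What the paper's approach buys is an explicit integral identity that it reuses later; what yours buys is brevity and conceptual clarity.
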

\begin{proof}
For $\mathbf{F}=\mathbf{C}$ the statement (\ref{wedgeref}) follows from the fact that the Dolbeault operators are antiderivations
and the second from Serre's duality (see \cite{Gi84}, page 199). This is true for $\mathbf{F}\in\{\mathbf{Z},\mathbf{Q}, \mathbf{R}\}$ as well, but
we have to additionally to prove that
for all $f_n(k+l)$ dimensional complex submanifolds $Y\subset X$ such that $[Y]\in H_{f_n(k+l)}(X,\mathbf{F})$
\begin{equation}\label{wedge}
\int_Y i_Y^*(\omega\wedge\eta)\in \mathbf{F},
\end{equation}
and for all $(n-k)$ dimensional complex  submanifolds $Y\subset X$ such that $[Y]\in H_{2(n-k)}(X,\mathbf{F})$
\begin{equation}\label{star}
\int_Y i_Y^*(\bar{*}\omega)\in \mathbf{F},
\end{equation}
since for $\mathbf{F}=\mathbf{C}$ these statements are trivially true.
We begin with (\ref{wedge})
and consider the diagonal map $\mathfrak{d}:X\rightarrow X\times X$, denoting with $\pi_1$ and $\pi_2$ the projections from $X\times X$
 onto its first and second factor. For a $\mathbf{F}$-$(k+l)$-cycle $Y$ we have
\begin{equation}
\int_{Y}i_Y^*(\omega\wedge\eta)=\int_{\mathfrak{d}(Y)}\pi_1^*(\omega)\wedge\pi_2^*(\eta).
\end{equation}
Let us suppose that $\mathfrak{d}(Y)$ were homologous in $X\times X$ to $\sum_i S_i\times T_i$, for various cycles $S_i$ and $ T_i$ in $H_{*}(X,\mathbf{F})$,
 with $\dim(S_i)+\dim( T_i)=k+l$. Then we would have
\begin{equation}\label{int_wedge}
\int_{Y}i_Y^*(\omega\wedge\eta)=\sum_i\int_{S_i\times T_i}\pi_1^*(\omega)\wedge\pi_2^*(\eta)=\sum_{(\dim(S_i),\dim( T_i))=(k,l)}\int_{S_i}i_{S_i}^*(\omega)\int_{T_i}i_{T_i}^*(\eta).
\end{equation}
This would prove the result, because the integrals over terms where $\dim(S_i)\neq k$ would drop out.
If $\dim(S_i)<k$, then $\pi_1^*(\omega)\vert_{S_i\times T_i}=0$ so $\int_{S_i\times T_i}\pi_1^*(\omega)\wedge\pi_2^*(\eta)=0$,
 and likewise if $\dim(T_i)<l$.
%
%
K\"unneth's theorem includes the statement that
\begin{equation}
0\rightarrow \bigoplus_{i+j=2n}H_i(X,\mathbf{F})\otimes_{\mathbf{F}} H_j(X,\mathbf{F})\rightarrow H_{2n}(X\times X,\mathbf{F})\rightarrow\bigoplus_{i+j=2n}\text{Tor}_1^{\mathbf{F}}(H_i(X,\mathbf{F}),H_j(X,\mathbf{F}))\rightarrow 0
\end{equation}
is (noncanonically) split, where $\text{Tor}_1^{\mathbf{F}}$ is the first Tor functor. If we choose such a splitting
\begin{equation}
\iota:\bigoplus_{i+j=2n}\text{Tor}_1^{\mathbf{F}}(H_i(X,\mathbf{F}),H_j(X,\mathbf{F}))\rightarrow H_{2n}(X,\mathbf{F}),
\end{equation}
then we can write $\mathfrak{d}(Y)=\sum_iS_i\times T_i+\iota\left(\sum_j\vartheta_j\right)$ for some $\vartheta_j$ in torsion groups. So, any $\mathbf{F}$-multiple of $\vartheta_j$ is homologous to zero, which means that
\begin{equation}
\int_{\iota(\vartheta_j)}\pi_1^*(\omega)\wedge\pi_2^*(\eta)=0.
\end{equation}
 So, we conclude that
\begin{equation}\label{wedgefree}
\int_{Y}i_Y^*(\omega\wedge\eta)=\sum_{i}\int_{S_i\times T_i}\pi_1^*(\omega)\wedge\pi_2^*(\eta).
\end{equation}
and, utilizing (\ref{int_wedge}), equation (\ref{wedge}) is proved.\\
We show now that statement (\ref{starref}) holds true. For $\mathbf{F}=\mathbf{R},\mathbf{C}$ it is evident for any K\"ahler manifold $X$.
Any $[\omega]\in H^{k,k}(X,\mathbf{C})$ can be decomposed by Theorem \ref{Lefschetz} as
\begin{equation}
[\omega]=\sum_rL^r[\omega_r],
\end{equation}
where $\omega_r$ is of degree $2k-2r\le\min\left(n,2n-2k\right)$ and $L^{n-2k+2r+1}[\omega_r]=[0]\in H^{n+1}(X,\mathbf{C})$.
We then apply the Hodge star operator to obtain
\begin{equation}
\bar{*}[\omega]=\sum_r\bar{*}L^r[\omega_r].
\end{equation}
By applying Proposition \ref{starrep}, since $L^{n-2k+2r+1}[\omega_r]=0$, we see that
\begin{equation}
\bar{*}L^r[\omega_r]=(-1)^{\frac{(2k-2r)(2k-2r_1)}{2}}\frac{L^{n-2k+3r}}{(n-2k+2r)!}[\omega_r],
\end{equation}
which does not vanish if and only if $r=0$. Therefore,
\begin{equation}
\bar{*}[\omega] =\bar{*}[\omega_0]\text{ and }
[\omega] = [\omega_0].
\end{equation}
By Kodaira's criterion (Theorem \ref{Kod}), the K\"ahler class is integer, because $X$ is complex projective,
and by (\ref{wedgeref}) $L^{n-2k+3r}[\omega_0]\in H^{2}(X,\mathbf{Z})$.
We conclude that $\bar{*}[\omega]\in H^{2}(X,\mathbf{Q})$, so that (\ref{starref}) is proved.\\
\end{proof}

\begin{rem} Actually, for the proof of the Hodge conjecture we only need to study the case $\mathbf{F}=\mathbf{Q}$. Nevertheless,
we verify the truth of all needed partial results for all choices of $\mathbf{F}$ in order to understand why
the proof of the Hodge conjecture works for $\mathbf{F}=\mathbf{Q}$ but does not for $\mathbf{F}=\mathbf{Z,R,C}$.
\end{rem}

\begin{defi}\label{deffc}
Let $Z$ be a $k$-complex codimensional closed submanifold of the $n$ complex dimensional K\"ahler manifold $X$. The expression
\begin{equation}\label{defZ}
[Z]:=\bar{*}\left[\left(\int_Zi_Z^*(\cdot)\right)^{\sharp}\right]
\end{equation}
defines a cohomology class in $H^{k,k}(X,\mathbf{C})$ by $Z$, which is termed the \textbf{fundamental class}.
\end{defi}
\begin{rem} The definition of $[Z]$ carries over for any closed differentiable real $k$-codimensional submanifold $Z$ of $X$ for any closed real $n$-dimensional differentiable
manifold $X$, utilizing the real Hodge star operator $*$, and the pull-back of $i_Z^*:\Omega^{k}(X,\mathbf{R})\rightarrow \Omega^{k}(Z,\mathbf{R})$,
leading to a $[Z]\in H^k(X,\mathbf{R})$.
\end{rem}
\begin{proposition}
Let $X$ be a compact complex manifold without boundary and $Z$ a complex $k$-codimensional submanifold of $X$.
 The cohomology class $[Z]$ defined by the expression (\ref{defZ}) satisfies
\begin{equation}
[Z]=J_Z(T^{-1}(1)),
\end{equation}
where $T:H^k(X,X\setminus Z,\mathbf{Z})\cong H^0(Z,\mathbf{Z})$ is the Thom isomorphism and
$j_Z:H^k(X,X\setminus Z,\mathbf{Z})\rightarrow H^k(X,\mathbf{Z})$ the natural map.
In particular, if $X$ is a K\"ahler manifold, every fundamental class belongs to the integer Hodge cohomology $\text{Hdg}^k(X,\mathbf{Z})$.
\end{proposition}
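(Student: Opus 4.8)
The strategy is to show that the analytically defined class $[Z]$ of (\ref{defZ}) is, exactly like the topological class $j_Z(T^{-1}(1))$, characterised by its period pairing against closed forms of complementary degree, and then to invoke the classical identification of that period-characterised class with $j_Z(T^{-1}(1))$ (Thom, de Rham; see \cite{BoTu82} and \cite{GrHa94}). First I would make sense of the right-hand side of (\ref{defZ}). Since $Z$ is a closed, hence compact, complex submanifold of complex dimension $n-k$, the pull-back $i_Z^{*}$ kills pointwise every component of bidegree $(p,q)\neq(n-k,n-k)$ of a $2(n-k)$-form on $X$, and by Stokes' theorem (with $\partial Z=\emptyset$) the number $\int_Z i_Z^{*}\omega$ depends only on the de Rham class of $\omega$; hence, through the Hodge decomposition of $H^{2(n-k)}(X,\mathbf{C})$, the functional $\omega\mapsto\int_Zi_Z^*\omega$ vanishes on every Dolbeault summand other than $H^{n-k,n-k}(X,\mathbf{C})$ and so defines a functional on the latter. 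Thus $\left(\int_Z i_Z^{*}(\cdot)\right)^{\sharp}$ represents a well-defined class $[\alpha]\in H^{n-k,n-k}(X,\mathbf{C})$, with harmonic representative $\alpha$ determined through the scalar product (\ref{sp}) by $\int_X\alpha\wedge\bar{*}\omega=\int_Z i_Z^{*}\omega$ for all closed $\omega\in\Omega^{n-k,n-k}(X,\mathbf{C})$, and $[Z]:=\bar{*}[\alpha]\in H^{k,k}(X,\mathbf{C})$, the last membership because the conjugate-linear Hodge star sends $\Omega^{n-k,n-k}$ into $\Omega^{k,k}$ (Serre duality, cf. the proof of Proposition \ref{close}).

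Next I would establish the period identity
\begin{equation*}
\int_X [Z]\wedge\beta=\int_Z i_Z^{*}\beta \qquad\text{for every closed }\beta\in\Omega^{2(n-k)}(X,\mathbf{C}).
\end{equation*}
Using $\bar{*}\bar{*}=(-1)^{p+q}$ on $\Omega^{p,q}$ to write $\beta=\bar{*}\bar{*}\beta$, the Hermitian symmetry of the pairing $(\mu,\nu)\mapsto\int_X\mu\wedge\bar{*}\nu$, and the fact that only the $(n-k,n-k)$-component of $\beta$ contributes on either side, this identity reduces by a routine sign and conjugation computation to the defining relation for $[\alpha]$ obtained above. Consequently $[Z]$ and $j_Z(T^{-1}(1))$ have the same periods over all cycles in $H_{2(n-k)}(X,\mathbf{C})$, and non-degeneracy of the de Rham (Poincar\'e) pairing $H^{2k}(X,\mathbf{C})\times H^{2(n-k)}(X,\mathbf{C})\to\mathbf{C}$ forces their images in $H^{2k}(X,\mathbf{C})$ to coincide; identifying de Rham cohomology with its image in complex singular cohomology as in (\ref{char}) gives $[Z]=j_Z(T^{-1}(1))$.

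The two stated consequences then follow at once. On the one hand $[Z]\in H^{k,k}(X,\mathbf{C})$ by the construction above. On the other hand $T^{-1}(1)$ is the integral Thom class of the oriented normal bundle of $Z$ in $X$ and $j_Z\colon H^{2k}(X,X\setminus Z,\mathbf{Z})\to H^{2k}(X,\mathbf{Z})$ is a homomorphism of integral cohomology, so $j_Z(T^{-1}(1))\in H^{2k}(X,\mathbf{Z})$; since its image in $H^{2k}(X,\mathbf{C})$ is $[Z]$, we conclude $[Z]\in H^{2k}(X,\mathbf{Z})\cap H^{k,k}(X,\mathbf{C})=\text{Hdg}^k(X,\mathbf{Z})$ whenever $X$ is K\"ahler.

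The main obstacle I expect is essentially one of careful bookkeeping: matching the conjugate-linear operator $\bar{*}$ and the $\sharp/\flat$ identifications of (\ref{sp}) with the real Hodge star and the real Poincar\'e duality underlying the classical statement, so that all signs, powers of $\sqrt{-1}$ and complex conjugations cancel; and justifying that the smooth harmonic representative $\alpha$ furnished by Hodge theory genuinely represents the closed current of integration over $Z$, whose Poincar\'e dual is $j_Z(T^{-1}(1))$ by the classical Thom/de Rham argument. None of this is conceptually hard, but it is where a fully rigorous proof spends its effort.
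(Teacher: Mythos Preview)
Your sketch is correct and is in fact considerably more than the paper provides: the paper's entire proof is the single sentence ``It is a reformulation of Corollary 11.15 in \cite{Vo10} page 271.'' What you have written is essentially the standard argument underlying that corollary---identify the analytically defined class via its periods with the Poincar\'e dual of the integration current on $Z$, then quote the Thom/de Rham description of that Poincar\'e dual---and your honest flagging of the conjugate-linear bookkeeping as the only real work is accurate. So your approach is not different from the paper's in spirit; it simply unpacks the citation the paper leaves opaque.
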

\begin{proof}
It is a reformulation of Corollary 11.15 in \cite{Vo10} page 271.\\
\end{proof}
\noindent By  Remark \ref{remChow} and Definition \ref{deffc} the Hodge conjecture can  be restated as
\begin{conjecture}[{\bf Hodge}]\label{Hodge2}
On a non-singular complex projective manifold $X$ any rational Hodge class is a rational linear combination of the fundamental classes
of closed complex subvarieties of $X$.
\end{conjecture}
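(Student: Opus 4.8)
\medskip
\noindent\emph{Strategy of proof.} Since every smooth complex submanifold of a projective manifold is a projective subvariety (Corollary~\ref{subvar}) and analytic and algebraic cycles agree on a projective manifold (Remark~\ref{remChow}), it suffices to produce, for each $k=0,\dots,n$, finitely many closed complex \emph{submanifolds} of $X$ whose fundamental classes span $\text{Hdg}^k(X,\mathbf{Q})$ over $\mathbf{Q}$ --- the slightly stronger statement announced in Section~5. The cases $k=0$ (take $[X]$) and $k=n$ (take the class of a point) are immediate, so fix $1\le k\le n-1$ and run a double induction: an outer induction on $n=\dim_{\mathbf{C}}X$, and, inside it, an argument organised around the base case $k=1$.

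\noindent\emph{Base case $k=1$.} Let $[\theta]\in\text{Hdg}^1(X,\mathbf{Q})$ with harmonic representative $\theta$, and set $\omega:=\bar{*}\theta\in\Omega^{n-1,n-1}(X,\mathbf{C})$. By Proposition~\ref{close} (whose proof invokes Kodaira's criterion, Theorem~\ref{Kod}, to make the K\"ahler class integral --- this is exactly where projectivity is used) one has $[\omega]=[\bar{*}\theta]\in H^{n-1,n-1}(X,\mathbf{Q})$, and since $\bar{*}\bar{*}=(-1)^{p+q}=+1$ on $(1,1)$-forms, $[\theta]=[\bar{*}\omega]$. Proposition~\ref{PropKey} then asserts that $[\theta]$ is the fundamental class of a closed complex hypersurface of $X$ provided some atlas $\{(U_i,\Phi_i)\}_{i=0,\dots,K}$ carries a chartwise family $\psi=(\psi_0,\dots,\psi_K)$ of diffeomorphisms for which $DF^{\omega}(\psi,T\psi)$ is injective, i.e.\ provided $\mathcal{F}^{\omega,\{U_i\}}\neq\emptyset$. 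I would verify this by starting from a finite atlas with relatively compact charts (Proposition~\ref{cext}) and a naive candidate $\psi^{(0)}$, using the Green-function/boundary-value representation of Section~3 to realise $F^{\omega}$ as a smooth tame map between tame Fr\'echet spaces whose linearisation at $\psi^{(0)}$ is surjective with an explicit tame right inverse (the boundary representation supplying the inverse), and then applying the Nash--Moser generalised inverse function theorem of Section~4 to conclude that $\mathcal{F}^{\omega,\{U_i\}}$ is non-empty, in fact open and dense near $\psi^{(0)}$. Absorbing a large multiple of the hyperplane class $h$ (algebraic by Theorem~\ref{Kod}), i.e.\ writing $[\theta]=([\theta]+m\,h)-m\,h$ with $m\gg 0$, lets one assume $[\theta]$ lies in the positive cone, which is where injectivity is easiest to certify. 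In particular $\text{Hdg}^1(X,\mathbf{Q})$ is spanned by fundamental classes of smooth hypersurfaces, and a smooth ample divisor $Y\subset X$ exists, a projective manifold of dimension $n-1$ by Corollary~\ref{subvar}.

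\noindent\emph{Inductive step.} For $2k\ge n+1$ the Gysin pushforward $i_{Y*}\colon H^{2k-2}(Y,\mathbf{Q})\to H^{2k}(X,\mathbf{Q})$ is surjective (Lefschetz hyperplane theorem for a smooth ample divisor), sends $\text{Hdg}^{k-1}(Y,\mathbf{Q})$ onto $\text{Hdg}^{k}(X,\mathbf{Q})$, and carries the fundamental class of a submanifold $W\subseteq Y$ to that of the codimension-$k$ submanifold $W\subseteq X$; combined with the outer induction hypothesis for $Y$ this spans $\text{Hdg}^k(X,\mathbf{Q})$. For $2k\le n-1$, the hard Lefschetz isomorphism of Theorem~\ref{Lefschetz}, $L^{\,n-2k}\colon H^{2k}(X,\mathbf{Q})\to H^{2n-2k}(X,\mathbf{Q})$, is defined over $\mathbf{Q}$ (as $L=h\cup(\cdot)$ with $h$ integral by Theorem~\ref{Kod}) and respects Hodge type, hence restricts to $\text{Hdg}^k(X,\mathbf{Q})\cong\text{Hdg}^{n-k}(X,\mathbf{Q})$ with $2(n-k)\ge n+1$ falling under the previous range; since $L^{\,n-2k}$ is cup product with $h^{\,n-2k}$, itself the fundamental class of a transverse intersection of $n-2k$ hyperplane sections, a moving-lemma argument intersecting the submanifolds obtained for $\text{Hdg}^{n-k}$ with generic hyperplane sections produces the desired codimension-$k$ submanifolds of $X$ (the middle case $2k=n$ being treated by a separate Lefschetz-type argument on a hyperplane section).

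\noindent\emph{Where the obstacle lies.} Two points carry the argument. The analytically hard one is the certification $\mathcal{F}^{\omega,\{U_i\}}\neq\emptyset$ in the base case: this is where the Green-function representation of Section~3 and the Nash--Moser scheme of Section~4 must work in tandem, and where both projectivity (through Kodaira's criterion) and \emph{rational} coefficients are genuinely used --- over $\mathbf{Z}$ the scheme cannot control torsion (consistent with Atiyah--Hirzebruch), and a general K\"ahler manifold carries no integral K\"ahler class (consistent with the $T^2$ counterexample). The more structural obstacle, and the one I expect to be decisive, is the low-codimension recursion: reducing an arbitrary class of $\text{Hdg}^k(X,\mathbf{Q})$ with $2k\le n$ to codimension-one data by Gysin pushforward from ample divisors reaches only the subspace $h\cup H^{2k-2}(X,\mathbf{Q})$, so the reduction must instead be run recursively along a carefully chosen tower $X\supset Y_1\supset Y_2\supset\cdots$ of hypersurfaces obtained by iterating Proposition~\ref{PropKey} (rather than along hyperplane sections), and proving that such a tower captures \emph{all} of $\text{Hdg}^k(X,\mathbf{Q})$ --- not merely the classes accessible from divisor classes --- is the step where the difficulty is concentrated.
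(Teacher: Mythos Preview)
Your proposal diverges from the paper's proof in a substantial way, and the divergence introduces a genuine gap.

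The paper does \emph{not} proceed by an induction on $\dim_{\mathbf C}X$ combined with Gysin pushforward and Hard Lefschetz. Instead, for each fixed $k=1,\dots,n-1$ it works directly on $X$: Corollary~\ref{C1} iterates the boundary representation (Theorem~\ref{MVP}) $2(n-k)$ times, cutting down through a tower $\partial B_0\supset\partial B_1\supset\cdots$ of real-codimension-one boundaries until it reaches a real-codimension-$2(n-k)$ piece $\partial B_{2(n-k)-1}$, and then solves the PDE~(\ref{app4}) via Lemma~\ref{TL} to force that bottom piece to be a complex hypersurface of the previous stage, hence a complex codimension-$(n-k)$ submanifold of $X$. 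The injectivity hypothesis $\mathcal F^{\omega^{2((n-1)-k)},\{U_i\}}\neq\emptyset$ is then verified for \emph{every} nonzero rational class, uniformly in $k$, by Lemma~\ref{L_dense}; this lemma (together with Lemma~\ref{H}) is where rationality and projectivity enter, and it yields that the fundamental classes so produced span $\text{Hdg}^k(X,\mathbf Q)$. No Lefschetz hyperplane or Gysin step appears.

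Your inductive scheme, by contrast, handles only the range $2k\ge n+1$ cleanly. Your treatment of $2k\le n-1$ is circular: you pass from $[\alpha]\in\text{Hdg}^k$ to $L^{n-2k}[\alpha]\in\text{Hdg}^{n-k}$, express the latter as $\sum c_i[Z_i]$, and then propose to recover algebraic representatives for $[\alpha]$ by ``intersecting the $Z_i$ with generic hyperplane sections''. But intersecting a codimension-$(n-k)$ cycle with $n-2k$ hyperplanes raises codimension to $2n-3k$, not $k$; the operation you actually need is $(L^{n-2k})^{-1}$ applied to algebraic classes, and the assertion that this inverse preserves algebraicity is precisely the Lefschetz standard conjecture of type~B, which is not known in general and is essentially as hard as the Hodge conjecture itself. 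You flag this as ``the more structural obstacle'' at the end, but it is not a mere obstacle to be overcome later---it invalidates the low-codimension half of the induction. The paper sidesteps this completely by never attempting to invert $L$: it manufactures a codimension-$(n-k)$ submanifold for each rational $(k,k)$-class directly, via the iterated Green-function construction and the density Lemma~\ref{L_dense}.
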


\section{Green Function for the Dirac Laplacian}
\begin{defi} A \textbf{Green function} for the Dirac Laplacian $P$ on the Dirac bundle $(V,\langle,\cdot,\cdot\rangle,\nabla)$ over the
Riemannian manifold $(X,g)$ under the \textbf{Dirichlet boundary condition}
is given by the smooth section
\begin{equation}
G:X\times X\setminus\Delta\rightarrow V\boxtimes V^*,
\end{equation}
locally integrable in $X\times X$, which satisfies in the weak (i.e. distributional) sense, the following boundary problem:
\begin{equation}
\left\{
  \begin{array}{l}
    P_yG(x,y)=\delta(y-x)\mathbb{1}_{V_y}\\
    G(x,y)= 0,\;\text{ for } x\in\partial X\setminus\{y\},
  \end{array}
\right.
\end{equation}
for all $x\neq y\in X$, where $\Delta:=\{(x,y)\in X\times X|\,x=y\}$, and $V\boxtimes V^*$ is the fibre bundle over $X\times X$ such that the fibre over $(x,y)$ is given by
$\text{Hom}(V_y, V_x)$.\\ In other words, we have
\begin{equation}
\int_X\langle G(x,y)\psi(x),P_y\varphi(y)\rangle\, d\text{vol}_{y\in X}=\langle \psi(x),\varphi(x)\rangle_x,
\end{equation}
for all $x\in X$ and all sections $\psi,\varphi\in C^{\infty}(X,V)$, where $\varphi$ satisfies $\varphi|_{\partial X}=0$, the Dirichlet boundary condition .
\end{defi}
\begin{proposition}\label{prop31}
Let $(V,\langle,\cdot,\cdot\rangle,\nabla)$ be a Dirac bundle over the compact Riemannian manifold $X$. Then, the Dirac Laplacian $P$ has a Green function under the Dirichlet
boundary condition.
\end{proposition}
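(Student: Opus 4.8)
The plan is to construct the Green function for the Dirac Laplacian $P=Q^2$ on the compact Riemannian manifold $X$ with boundary by reducing to the standard elliptic theory for second-order self-adjoint operators. First I would record that $P$ is a formally self-adjoint, second-order elliptic differential operator on sections of the Dirac bundle $V$, with the same principal symbol as the scalar Laplacian (by the Weitzenböck/Lichnerowicz identity $P=\nabla^*\nabla+\mathcal{R}$ for a suitable zeroth-order curvature term $\mathcal{R}$, which follows from conditions (4)--(6) in the definition of a Dirac bundle). In particular $P$ with Dirichlet boundary conditions is a nonnegative self-adjoint operator; the main subtlety is that $\ker P$ on a manifold with boundary under the Dirichlet condition is trivial, since $Pu=0$ together with $u|_{\partial X}=0$ forces, via integration by parts, $\int_X|Qu|^2=0$, hence $Qu=0$, and then a second integration by parts together with $u|_{\partial X}=0$ forces $u\equiv0$. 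This triviality of the kernel is what makes the inverse, and hence the Green function, exist without any need to project onto a complement.

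Next I would invoke the standard construction of the Dirichlet resolvent: the bounded operator $P$ from $H^2(X,V)\cap H^1_0(X,V)$ to $L^2(X,V)$ is invertible with compact inverse $G:L^2(X,V)\to H^2(X,V)$, by the Lax--Milgram theorem applied to the coercive (because of trivial kernel and compactness of $X$) bilinear form $a(u,v)=\int_X\langle Qu,Qv\rangle\,d\mathrm{vol}$ on $H^1_0$, combined with elliptic regularity up to the boundary. The Schwartz kernel theorem then furnishes a distributional kernel $G(x,y)\in\mathcal{D}'(X\times X,V\boxtimes V^*)$ representing this operator, and I would check that $G$ solves $P_yG(x,y)=\delta(y-x)\mathbb{1}_{V_y}$ in the distributional sense and vanishes for $x\in\partial X$ by construction of the domain. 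The off-diagonal smoothness $G\in C^\infty(X\times X\setminus\Delta,V\boxtimes V^*)$ and local integrability on $X\times X$ follow from interior and boundary elliptic regularity together with the classical parametrix construction for second-order elliptic operators: near the diagonal $G$ has the same leading singularity as the Newtonian potential (of order $|x-y|^{2-\dim X}$ for $\dim X\geq 3$, logarithmic for $\dim X=2$), which is locally integrable, and the boundary term is smooth away from the diagonal by reflection/parametrix arguments.

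Finally I would verify the stated weak formulation $\int_X\langle G(x,y)\psi(x),P_y\varphi(y)\rangle\,d\mathrm{vol}_y=\langle\psi(x),\varphi(x)\rangle_x$ for all smooth $\varphi$ with $\varphi|_{\partial X}=0$: this is just the statement that $G$ is a two-sided inverse of $P$ on the Dirichlet domain, transposed using self-adjointness of $P$ and the symmetry $G(x,y)^*=G(y,x)$ inherited from self-adjointness of the resolvent. The main obstacle, and the only place requiring genuine care rather than citation, is the boundary regularity: one must ensure that the Dirichlet problem $Pu=f$, $u|_{\partial X}=0$ is well-posed in the $V$-bundle setting and that the kernel is smooth up to $\partial X$ away from the diagonal; this is handled by a standard boundary parametrix (e.g. via the method of freezing coefficients and the half-space model, or by citing the general theory of elliptic boundary value problems for operators acting on vector bundles, as in Gilkey's treatment referenced earlier). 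Everything else is a routine transcription of classical potential theory for the scalar Laplacian into the Clifford-module setting, using that the Dirac Laplacian differs from $\nabla^*\nabla$ only by a bundle endomorphism.
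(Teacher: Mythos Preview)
Your outline is essentially the standard elliptic-theory construction that the paper itself defers to by citing Aubin's treatment of the Laplace--Beltrami Green function and Raulot's Dirac-operator analogue; in that sense the approaches coincide, with yours being a fleshed-out version of what those references contain.

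One technical slip: the step ``a second integration by parts together with $u|_{\partial X}=0$ forces $u\equiv0$'' does not work as stated. From $Qu=0$ and $u|_{\partial X}=0$ no further integration by parts yields $u=0$; the Weitzenb\"ock formula only gives $\int_X|\nabla u|^2=-\int_X\langle\mathcal{R}u,u\rangle$, which is inconclusive without a sign on $\mathcal{R}$. The correct argument at this point is the unique continuation property for Dirac-type operators: a solution of $Qu=0$ vanishing on the boundary hypersurface extends by zero to a weak solution on a slightly enlarged manifold and hence vanishes identically. With that replacement the triviality of the Dirichlet kernel, and thus the coercivity input to Lax--Milgram, are secured, and the remainder of your sketch (Schwartz kernel, parametrix asymptotics near the diagonal, boundary regularity) is standard and correct.
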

\begin{proof} The proof formally follows the steps of the proof for the Laplace Beltrami operator as in chapter $4$ of \cite{Au82}. See \cite{Ra11} for a proof
for the Atyiah-Singer operator under the chiral bag boundary condition, which can be easily modified for the Dirac Laplacian under the Dirichlet boundary condition.
\end{proof}
Note that if $V$ is the full exterior algebra bundle over $X$, the Dirichlet boundary condition is {\it not} the absolute boundary condition for differential
forms. Yet, they both generalize the Dirichlet boundary condition for functions.
\begin{theorem}\label{MVP}
Let $(V,\langle,\cdot,\cdot\rangle,\nabla)$ be a Dirac bundle over the compact Riemannian manifold $X$ with non-vanishing boundary $\partial X\neq\emptyset$, $Q$ the Dirac operator and $P$ the Dirac Laplacian. Then, any section
$\varphi\in C^{\infty}(X,V)$ satisfying $P\varphi=0$ can be written in terms of its values on the boundary as
\begin{equation}\label{rep}
\varphi(x)=-\left[\int_{\partial X}\langle \gamma(\nu)Q_yG(x,y)(\cdot),\varphi(y)\rangle_y\, d\text{vol}_{y\in \partial X}\right]^{\flat_x},
\end{equation}
where  $G$ is Green function of $P$ under the Dirichlet boundary condition, and $\flat:V^*\rightarrow V$ the bundle isomorphism induced by the Hermitian (Riemannian) structure $\<\cdot,\cdot>$.
The vector field $\nu$ denotes the inward pointing unit normal on $\partial X$.
\end{theorem}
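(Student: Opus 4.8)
The proof follows the classical route to the Poisson representation: two Green identities plus a blow‑up at the evaluation point, the only new ingredient being the bookkeeping of the Clifford action $\gamma$ and of the musical isomorphism $\flat$. \emph{First}, from the Dirac‑bundle axioms — Clifford multiplication by tangent vectors is fibrewise skew‑adjoint, $\nabla$ is metric, and $\nabla$ is a module derivation — one obtains, by a pointwise computation in a geodesic frame, the identity $\langle Q\psi,\phi\rangle-\langle\psi,Q\phi\rangle=\operatorname{div}W$ for all $\psi,\phi\in C^{\infty}(X,V)$, where $W$ is the vector field determined by $g(W,Z)=-\langle\psi,\gamma(Z)\phi\rangle$. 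Integrating over $X$ and invoking the divergence theorem, with $\nu$ the inward unit normal (hence $-\nu$ the outward one), yields Green's first identity
\[
\int_{X}\langle Q\psi,\phi\rangle\,d\text{vol}=\int_{X}\langle\psi,Q\phi\rangle\,d\text{vol}+\int_{\partial X}\langle\psi,\gamma(\nu)\phi\rangle\,d\text{vol}_{\partial X}.
\]

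\emph{Second}, applying this identity once to the pair $(Q\psi,\phi)$ and once to the pair $(\psi,Q\phi)$ and subtracting gives Green's second identity for the Dirac Laplacian $P=Q^{2}$:
\[
\int_{X}\big(\langle P\psi,\phi\rangle-\langle\psi,P\phi\rangle\big)\,d\text{vol}=\int_{\partial X}\big(\langle Q\psi,\gamma(\nu)\phi\rangle+\langle\psi,\gamma(\nu)Q\phi\rangle\big)\,d\text{vol}_{\partial X}.
\]

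\emph{Third}, fix $x$ in the interior of $X$, let $B_{\varepsilon}(x)$ be a small geodesic ball, and apply the second identity on $X\setminus B_{\varepsilon}(x)$ with $\psi=\varphi$ and $\phi=G(x,\cdot)$ — the inner products being contracted so that every term is $\Hom(V_{\,\cdot\,},V_{x})$‑valued; equivalently one tests against an arbitrary $w\in V_{x}$ and uses that the adjoint Dirichlet Green function is $G(y,x)=G(x,y)^{*}$. Since $P\varphi=0$ and $P_{y}G(x,y)=0$ for $y\neq x$, the left‑hand side vanishes. On the component $\partial X$ the Dirichlet condition $G(x,\cdot)|_{\partial X}=0$ kills the term $\langle Q\varphi,\gamma(\nu)G(x,\cdot)\rangle$, so this component contributes $\int_{\partial X}\langle\varphi(y),\gamma(\nu)Q_{y}G(x,y)\rangle_{y}\,d\text{vol}_{y\in\partial X}$, which, once the pairing is rewritten through the Hermitian structure and $\flat_{x}$ is applied, is exactly (minus) the integral in (\ref{rep}). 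On the component $\partial B_{\varepsilon}(x)$ one uses the structure of $G$ near the diagonal coming from the construction in Proposition \ref{prop31}: writing $G=E+H$ with $E$ a local parametrix of the Laplace‑type operator $P$ (so $E(x,y)\sim c_{m}\,\dist(x,y)^{2-m}\mathbb{1}$ with $m=\dim_{\mathbf{R}}X$, logarithmic when $m=2$) and $H$ less singular, the contributions of $H$ and of the purely tangential part of $\gamma(\nu)Q_{y}$ tend to $0$ as $\varepsilon\to0$, while the normal‑derivative part $-\nabla_{\nu}$ of $\gamma(\nu)Q_{y}$ applied to $E$ reproduces $\varphi(x)$ up to the universal constant, exactly as for the scalar Green function. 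Combining the two limits gives $0=\varphi(x)+\big[\int_{\partial X}\langle\gamma(\nu)Q_{y}G(x,y)(\cdot),\varphi(y)\rangle_{y}\,d\text{vol}_{y\in\partial X}\big]^{\flat_{x}}$, which is (\ref{rep}); the case $x\in\partial X$ follows by continuity.

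The hard part is the $\varepsilon\to0$ analysis on $\partial B_{\varepsilon}(x)$: one needs the leading singular term of $G$ at the diagonal, furnished by the parametrix construction behind Proposition \ref{prop31}, and one must verify that, after contraction with $\gamma(\nu)$ and integration over the shrinking sphere, precisely the normal‑derivative part of that leading term survives with the correct normalising constant, so that the limit equals $\varphi(x)$ exactly rather than some other scalar multiple of it — this is also where the overall sign in (\ref{rep}) is pinned down. Everything else is the divergence theorem together with the defining properties of a Dirac bundle.
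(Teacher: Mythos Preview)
Your argument is correct, but it follows a different route from the paper. The paper works directly with the distributional identity $P_{y}G(x,y)=\delta(y-x)\mathbb{1}_{V_y}$: it pairs both sides against an arbitrary test section $\psi(x)$ and integrates $\langle P_{y}G(x,y)\psi(x),\varphi(y)\rangle$ over all of $X$, then integrates by parts twice with respect to $Q$ (so that $P=Q^{2}$ is peeled off one factor at a time), collecting two boundary terms over $\partial X$. The hypotheses $P\varphi=0$ and $G(x,\cdot)|_{\partial X}=0$ kill the interior integral and one of the boundary terms, leaving precisely the representation~(\ref{rep}). No ball is excised and no $\varepsilon\to0$ limit is taken; the singularity of $G$ at the diagonal is handled implicitly by the distributional definition of the Green function.

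Your approach --- derive Green's first and second identities for $Q$ and $P$, apply them on $X\setminus B_{\varepsilon}(x)$, and recover $\varphi(x)$ from the parametrix asymptotics on $\partial B_{\varepsilon}(x)$ --- is the classical Poisson--kernel derivation and is more explicit about where the evaluation $\varphi(x)$ actually comes from. It buys you a self-contained argument that does not lean on the distributional framework, at the cost of the sphere-integral estimate you flag as the hard part. The paper's version is shorter and more formal: it trades that estimate for the assertion that integration by parts is legitimate against $G(x,\cdot)\psi(x)$ in the distributional sense, which is exactly what the definition of $G$ (and Proposition~\ref{prop31}) is meant to license. Both routes are standard and lead to the same formula with the same sign.
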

\begin{proof}
By definition of Green function we have for any $\psi,\varphi\in C^{\infty}(X,V)$
\begin{equation}\label{int}
\int_X\langle [P_yG(x,y)]\psi(x),\varphi(y)\rangle\, d\text{vol}_{y\in X}=\int_X\langle \delta(y-x)\psi(x),\varphi(y)\rangle\, d\text{vol}_{y\in X}=\langle\psi(x),\varphi(x)\rangle_x.
\end{equation}
By partial integration the l.h.s. of (\ref{int}) becomes
\begin{equation}\label{int2}
\begin{split}
&\int_X\langle [P_yG(x,y)]\psi(x),\varphi(y)\rangle\, d\text{vol}_{y\in X}=\int_X\langle P_y[G(x,y)\psi(x)],\varphi(y)\rangle\, d\text{vol}_{y\in X}=\\
&\quad=\int_X\langle Q_yG(x,y)\psi(x),Q_y\varphi(y)\rangle\, d\text{vol}_{y\in X}-\int_{\partial X}\langle\gamma(\nu)[Q_yG(x,y)\psi(x),\varphi(y)\rangle\, d\text{vol}_{y\in \partial X}=\\
&\quad=\int_X\langle G(x,y)\psi(y),\underbrace{P_y\varphi(y)}_{=0}\rangle\, d\text{vol}_{y\in X}-\int_{\partial X}\langle\gamma(\nu)\underbrace{G(x,y)}_{=0}\psi(y),Q_y\varphi(y)\rangle\, d\text{vol}_{y\in \partial X}+\\
&\qquad-\int_{\partial X}\langle\gamma(\nu)Q_yG(x,y)\psi(x),\varphi(y)\rangle\, d\text{vol}_{y\in \partial X}.
\end{split}
\end{equation}
By comparing (\ref{int}) with (\ref{int2}) we obtain
\begin{equation}
\langle\psi(x),\varphi(x)\rangle_x=-\int_{\partial X}\langle\gamma(\nu)Q_yG(x,y)\psi(x),\varphi(y)\rangle\, d\text{vol}_{y\in \partial X},
\end{equation}
which is equivalent to (\ref{rep}).
\end{proof}
\noindent Theorem \ref{MVP} can be seen as a generalization of the mean value property for harmonic functions.
In this generality it appears to be a new result, as a literature search astoundingly shows.
\begin{proposition}
Let $E\in\mathcal{D}^{\prime}(X,V)$ a \textbf{fundamental solution} of the Dirac Laplacian $P$ for the Dirac bundle
$(V,\langle,\cdot,\cdot\rangle,\nabla)$ over the compact Riemannian manifold $X$ with non-vanishing boundary $\partial X\neq\emptyset$, that is
\begin{equation}
PE=\delta\mathbb{1}_{V}.
\end{equation}
For any $x\in X$ let $H^x\in\mathcal{D}^{\prime}(X,V)$ be the \textbf{corrector function}, that is the (distributional) solution to the boundary value problem
\begin{equation}
\left\{
  \begin{array}{l}
    P_yH^x(y)=0\quad (y\in X)\\
    H^x(y)=E(y-x)\quad (y\in\partial X).
  \end{array}
\right.
\end{equation}
Then, the Green function of $P$ under the Dirichlet boundary condition can be written as
\begin{equation}
G(x,y)=E(y-x)-H^x(y)
\end{equation}
for all $x,y\in X$.
\end{proposition}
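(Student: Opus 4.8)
The plan is to verify the decomposition $G(x,y) = E(y-x) - H^x(y)$ directly by checking that the right-hand side satisfies the two defining properties of the Green function under the Dirichlet boundary condition: the distributional equation $P_y G(x,y) = \delta(y-x)\mathbb{1}_{V_y}$ and the boundary vanishing $G(x,y) = 0$ for $x \in \partial X \setminus \{y\}$. Both follow almost immediately from the defining properties of $E$ and $H^x$, so this is essentially a bookkeeping argument once the objects are set up correctly.

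First I would apply $P_y$ to the candidate expression. Since $PE = \delta\mathbb{1}_V$ by hypothesis and $P_y H^x(y) = 0$ in the distributional sense by the definition of the corrector function, linearity of $P_y$ gives
\begin{equation}
P_y\bigl(E(y-x) - H^x(y)\bigr) = \delta(y-x)\mathbb{1}_{V_y} - 0 = \delta(y-x)\mathbb{1}_{V_y},
\end{equation}
which is precisely the first equation in the definition of the Green function. Here one should note that $P$ has constant coefficients only in a local trivialization, so strictly speaking $E(y-x)$ must be understood as a parametrix-type fundamental solution and the shift $y - x$ interpreted via normal coordinates centred at $x$; I would remark that the argument is carried out in the weak sense against test sections, exactly as in the definition of the Green function, so no regularity issue arises.

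Next I would check the boundary condition: for $y \in \partial X$ (and $x \neq y$) the defining condition $H^x(y) = E(y-x)$ forces
\begin{equation}
G(x,y) = E(y-x) - H^x(y) = 0,
\end{equation}
so the Dirichlet boundary condition holds. Finally, uniqueness of the Green function under the Dirichlet boundary condition — which is guaranteed by Proposition \ref{prop31} together with the standard fact that the homogeneous Dirichlet problem for $P$ on a compact manifold with boundary has only the trivial solution — identifies this candidate with the Green function $G$, completing the proof. The only genuine subtlety, and hence the main obstacle, is making precise the sense in which the ``translation'' $E(y-x)$ of a fundamental solution of $P$ exists globally on $X$ (as opposed to locally near the diagonal); I would handle this by observing that one may take $E$ to be any parametrix whose singularity at the diagonal matches that of the Euclidean model, so that $E(y-x) - H^x(y)$ is globally well defined and the distributional identity above holds on all of $X$, with the discrepancy between a true fundamental solution and a parametrix absorbed into the smooth corrector term $H^x$.
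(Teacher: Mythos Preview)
Your proposal is correct and follows exactly the same approach as the paper: directly verify that $E(y-x)-H^x(y)$ satisfies the distributional equation $P_yG=\delta(y-x)\mathbb{1}_{V_y}$ and the Dirichlet boundary condition. The paper's proof is in fact much terser than yours---it omits the appeal to uniqueness and the discussion of what $E(y-x)$ means on a manifold---so your additional remarks on those points are reasonable elaborations rather than deviations.
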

\begin{proof}
By directly checking the definition of Green function we obtain for all $x,y\in X$
\begin{equation}
P_yG=P_yE(y-x)-P_yH^x(y)=\delta_x\mathbb{1}_{V_y},
\end{equation}
and for all $x\in X$, $y\in \partial X$
\begin{equation}
G(x,y)= E(y-x)-E(y-x)=0.
\end{equation}
The proof is completed.
\end{proof}
\noindent Theorem \ref{MVP} can be reformulated as follows.
\begin{corollary} Under the same assumptions as Theorem \ref{MVP}, we have for any $\psi,\varphi\in C^{\infty}(X,V)$
\begin{equation}
\int_X\langle \psi(x),\varphi(x)\rangle\, d\text{vol}_{x\in X}=-\int_{\partial X}\langle\gamma(\nu)Q_y\zeta[\psi](y),\varphi(y)\rangle\, d\text{vol}_{y\in \partial X},
\end{equation}
where
\begin{equation}\label{def_zeta}
\zeta[\psi](y):=\int_X G(x,y)\psi(x)\, d\text{vol}_{x\in X}.
\end{equation}
\end{corollary}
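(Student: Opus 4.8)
The plan is to integrate the mean-value representation (\ref{rep}) of Theorem \ref{MVP} over the interior variable $x\in X$ and then interchange the two integrations so as to recognize the operator $\zeta$ of (\ref{def_zeta}). As in Theorem \ref{MVP}, the section $\varphi$ satisfies $P\varphi=0$, while $\psi\in C^{\infty}(X,V)$ is arbitrary. First I would pass from the bundle-valued identity (\ref{rep}) to its scalar form: pairing (\ref{rep}) with $\psi(x)\in V_x$ and unwinding the isomorphism $\flat_x: V_x^{*}\to V_x$ of Theorem \ref{MVP} (for which $\langle w,\flat_x\alpha\rangle_x=\alpha(w)$), we obtain for every $x\in X$
\begin{equation*}
\langle\psi(x),\varphi(x)\rangle_x=-\int_{\partial X}\langle\gamma(\nu)\,Q_yG(x,y)\psi(x),\varphi(y)\rangle_y\,d\text{vol}_{y\in\partial X}.
\end{equation*}
Integrating this against $d\text{vol}_{x\in X}$ gives
\begin{equation*}
\int_X\langle\psi(x),\varphi(x)\rangle_x\,d\text{vol}_{x\in X}=-\int_X\int_{\partial X}\langle\gamma(\nu)Q_yG(x,y)\psi(x),\varphi(y)\rangle_y\,d\text{vol}_{y\in\partial X}\,d\text{vol}_{x\in X}.
\end{equation*}

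Next I would invoke Fubini's theorem to exchange the two integrals. This is legitimate because the Green function of the Dirac Laplacian satisfies the same near-diagonal estimates as in the scalar case, in particular $|Q_yG(x,y)|\le C\,\dist(x,y)^{1-\dim X}$ (these follow from the decomposition $G(x,y)=E(y-x)-H^x(y)$ recorded above, together with the parametrix construction behind Proposition \ref{prop31}, exactly as for the Laplace--Beltrami operator in \cite{Au82} and \cite{Ra11}); since $\int_X\dist(x,y)^{1-\dim X}\,d\text{vol}_x$ is bounded uniformly in $y$, the boundary $\partial X$ is compact, and $\psi,\varphi$ are bounded, the double integral converges absolutely. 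After the swap, for each fixed $y\in\partial X$ the fibre endomorphism $\gamma(\nu)$ and the first-order operator $Q_y$ act on the $y$-variable only, hence commute with $\int_X(\cdot)\,d\text{vol}_x$ and with $\langle\cdot,\varphi(y)\rangle_y$; pulling them out and using the definition $\zeta[\psi](y)=\int_XG(x,y)\psi(x)\,d\text{vol}_{x\in X}$ of (\ref{def_zeta}) yields
\begin{equation*}
\int_X\langle\psi(x),\varphi(x)\rangle_x\,d\text{vol}_{x\in X}=-\int_{\partial X}\langle\gamma(\nu)Q_y\zeta[\psi](y),\varphi(y)\rangle_y\,d\text{vol}_{y\in\partial X},
\end{equation*}
which is the asserted identity.

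The step that requires genuine care --- and the main obstacle --- is the commutation of the differential operator $Q_y$ with the integral $\int_X(\cdot)\,d\text{vol}_x$, since $G(\cdot,y)$ is singular on the diagonal. I would settle it by observing that $\zeta[\psi]$ is precisely the Green potential of $\psi$: from $P_yG(x,y)=\delta(y-x)\mathbb{1}_{V_y}$ and $G(x,y)=0$ for $y\in\partial X$ one reads off that $\zeta[\psi]$ solves the Dirichlet problem $P\zeta[\psi]=\psi$, $\zeta[\psi]|_{\partial X}=0$, hence $\zeta[\psi]\in C^{\infty}(\overline{X},V)$ by elliptic regularity. Differentiation under the integral sign for such potentials is classical: since $\dist(x,y)^{1-\dim X}$ is locally integrable in $x$, the integral $\int_XQ_yG(x,y)\psi(x)\,d\text{vol}_x$ converges absolutely, depends continuously on $y$ up to $\partial X$, and represents $Q_y\zeta[\psi](y)$ for interior $y$; comparing with the smooth section $Q_y\zeta[\psi]$ supplied by elliptic regularity forces equality on all of $\overline{X}$, in particular on $\partial X$, as needed. (If the hypothesis $P\varphi=0$ is dropped, the same computation produces the more general identity $\int_X\langle\psi,\varphi\rangle=\int_X\langle\zeta[\psi],P\varphi\rangle-\int_{\partial X}\langle\gamma(\nu)Q_y\zeta[\psi],\varphi\rangle$, of which the stated formula is the case $P\varphi=0$.)
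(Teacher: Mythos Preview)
Your argument is correct and follows exactly the route the paper intends: the Corollary is stated without proof as a ``reformulation'' of Theorem \ref{MVP}, and the implicit derivation is precisely to integrate the pointwise scalar identity $\langle\psi(x),\varphi(x)\rangle_x=-\int_{\partial X}\langle\gamma(\nu)Q_yG(x,y)\psi(x),\varphi(y)\rangle\,d\text{vol}_{y\in\partial X}$ (the last displayed line in the proof of Theorem \ref{MVP}) over $x\in X$ and swap the order of integration. You supply considerably more care than the paper does---the Fubini justification, the differentiation of the Green potential under the integral via elliptic regularity of $\zeta[\psi]$, and the observation that without $P\varphi=0$ one picks up the extra interior term $\int_X\langle\zeta[\psi],P\varphi\rangle$---none of which the paper spells out.
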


\begin{lemma}\label{bd_ind}
Let $P$ the Dirac Laplacian  for the Dirac bundle
$(V,\langle,\cdot,\cdot\rangle,\nabla)$ over the compact Riemannian manifold $X$ without boundary. Let us assume that
\begin{equation}
X=X_1\cup X_2,
\end{equation}
where $X_{1,2}$ are two $0$-codimensional Riemannian submanifolds of $X$ having disjoint interiors and the same boundary $\partial X_1=\partial X_2$. Then,
the Green functions $G^{X_1}$ and $G^{X_2}$ for the Dirac Laplacian $P$ on the Dirac bundle $V$ over $X_1$ and, respectively, $X_2$ define operators
$\zeta^{X_1}$ and $\zeta^{X2}$ via (\ref{def_zeta}), such that for any $\psi\in C^{\infty}(X,V)$ and any $y\in X$
\begin{equation}
\zeta^{X_1}[\psi](y)+\zeta^{X_2}[\psi](y)
\end{equation}
does not depend on $\partial X_{1,2}$.
\end{lemma}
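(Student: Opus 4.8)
The plan is to compute $\zeta^{X_1}[\psi]+\zeta^{X_2}[\psi]$ directly from the defining integral \eqref{def_zeta} and to show that the total contribution is governed only by the global operator $P$ on $X$, so that the common boundary $\partial X_1=\partial X_2$ cancels. First I would fix $y\in X$; without loss of generality assume $y$ lies in $X_1$ (the case $y\in\partial X_{1}$ being handled by continuity, and the roles of $X_1,X_2$ symmetric otherwise). Decompose the integral as
\begin{equation}
\zeta^{X_1}[\psi](y)+\zeta^{X_2}[\psi](y)=\int_{X_1}G^{X_1}(x,y)\psi(x)\,d\mathrm{vol}_x+\int_{X_2}G^{X_2}(x,y)\psi(x)\,d\mathrm{vol}_x,
\end{equation}
and write $G^{X_j}(x,y)=E(y-x)-H^{x,X_j}(y)$ using the fundamental-solution decomposition from the preceding proposition, where $E$ is a fixed fundamental solution of $P$ on all of $X$ and $H^{x,X_j}$ is the corrector on $X_j$. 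The sum of the two $E$-terms is $\int_X E(y-x)\psi(x)\,d\mathrm{vol}_x$, which is manifestly independent of how $X$ is cut into $X_1,X_2$.

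The remaining task is to understand the corrector contribution $-\int_{X_1}H^{x,X_1}(y)\psi(x)-\int_{X_2}H^{x,X_2}(y)\psi(x)$. The key step is to apply Green's identity (the partial-integration computation used in the proof of Theorem \ref{MVP}) on each piece $X_j$ to the pair $(H^{x,X_j},$ something harmonic$)$, converting the interior integrals into boundary integrals over $\partial X_j$. Concretely, for each $x$, $H^{x,X_j}(\cdot)$ solves $P H^{x,X_j}=0$ in $X_j$ with boundary value $E(\cdot-x)$ on $\partial X_j$; running the representation \eqref{rep} backwards expresses $\int_{X_j}H^{x,X_j}(y)\psi(x)$ in terms of the boundary data $E(\cdot-x)|_{\partial X_j}$ together with the normal derivative $\gamma(\nu)Q_y G^{X_j}$. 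Because $\partial X_1=\partial X_2$ as sets, but the inward normals $\nu_1,\nu_2$ point in opposite directions, the two boundary integrals come with opposite orientation. One then shows that what survives assembles exactly into a globally defined object on $X$ — equivalently, that $\zeta^{X_1}[\psi]+\zeta^{X_2}[\psi]$ equals $\int_X E(y-x)\psi(x)\,d\mathrm{vol}_x$ minus a term that depends only on the ambient Green function of $P$ on $X$ (which has no boundary), hence is independent of the auxiliary hypersurface $\partial X_{1,2}$.

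The main obstacle I anticipate is bookkeeping the jump across $\partial X_{1,2}$ of the normal derivative terms and showing they cancel with the correct sign: the corrector $H^{x,X_j}$ is only continuous up to $\partial X_j$, its normal derivative is generally discontinuous there, and one must verify that the discontinuities from the $X_1$-side and the $X_2$-side are negatives of each other so that their sum extends to a section that is smooth (hence well-defined, boundary-free) across the cut. This requires care with the distributional meaning of $P_yH^x=0$ near $\partial X_j$ and with the orientation convention for $\nu$. Once the cancellation of the $\partial X_{1,2}$-supported terms is established, independence of $\partial X_{1,2}$ is immediate, since the only remaining ingredients — $E$ and the data of $P$ on the closed manifold $X$ — make no reference to the decomposition.
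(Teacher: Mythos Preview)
Your decomposition $G^{X_j}=E-H^{x,X_j}$ and the observation that the $E$-terms combine to $\int_X E(y-x)\psi(x)\,d\mathrm{vol}_x$, independent of the cut, matches the paper exactly. Where you diverge is in handling the corrector contribution. The paper does not pass to boundary integrals at all: it observes that the two correctors $H^{x,X_1}$ and $H^{x,X_2}$, having the same Dirichlet data $E(\cdot-x)$ on the common hypersurface $\partial X_1=\partial X_2$, glue to a single section $H^x$ on all of $X$ satisfying $P_yH^x=0$; by elliptic regularity on the closed manifold $X$ this $H^x$ lies in $C^\infty(X,V)$, and the total corrector integral becomes $\int_X H^x(y)\psi(x)\,d\mathrm{vol}_x$, an integral over the whole of $X$ in which the hypersurface $\partial X_{1,2}$ appears only as a measure-zero set.

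Your boundary-integral route has a gap. Applying the representation \eqref{rep} to $H^{x,X_j}(\cdot)$ (as a $P$-harmonic function of $y$) yields
\[
H^{x,X_j}(y)=-\int_{\partial X_j}\big\langle\gamma(\nu)Q_wG^{X_j}(y,w)(\cdot),E(w-x)\big\rangle\,d\mathrm{vol}_w,
\]
but this re-introduces the Green function $G^{X_j}$, which itself depends on $\partial X_j$. The two boundary integrals over $\partial X_1$ and $\partial X_2$ therefore carry \emph{different} kernels $\gamma(\nu)Q_wG^{X_1}(y,w)$ and $\gamma(\nu)Q_wG^{X_2}(y,w)$, and the opposite-normal sign you invoke does not produce cancellation unless you already know that these normal traces agree on the common hypersurface. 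That matching of normal derivatives is precisely the transmission condition needed for the glued section $H^x$ to be $P$-harmonic across $\partial X_{1,2}$ --- i.e.\ it is equivalent to the paper's gluing/regularity step. So your route ultimately requires the same key input but reaches it less directly; the cleaner move is the paper's: glue first, invoke interior elliptic regularity on the closed manifold, and avoid boundary integrals altogether.
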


\begin{proof}
For any $x\in X$ let $H^x\in\mathcal{D^{\prime}}(X,V)$ be the solution of
\begin{equation}
\left\{
  \begin{array}{l}
    P_yH^x(y)=0\quad (y\in X)\\
    H^x(y)=E(y-x)\quad (y\in\partial X_1=\partial X_2),
  \end{array}
\right.
\end{equation}
that is, the restriction of $H^x$to $X_{1,2}$ is the corrector function for the fundamental solution of the Dirac Laplacian on $X_{1,2}$.
Then, we obtain
\begin{equation}\label{int_zeta}
\begin{split}
&\zeta^{X_1}[\psi](y)+\zeta^{X_2}[\psi](y)=\\
&\quad=\int_{X_1}G^{X_1}(x,y)\psi(y)\, d\text{vol}_{x\in X_1}+\int_{X_2}G^{X_2}(x,y)\psi(y)\, d\text{vol}_{x\in X_2}=\\
&\quad=\int_{X}E(x,y)\psi(y)\, d\text{vol}_{x\in X}-\int_{X}H^{X,x}(y)\psi(y)\, d\text{vol}_{x\in X_1},
\end{split}
\end{equation}
which does not depend on $\partial X_{1,2}$: the first integral in the r.h.s of (\ref{int_zeta}) clearly is independent of $Y=\partial X_{1,2}$; the second integral is independent as well,
 because $H^{x}$ lies in the kernel of the elliptic operator $P$ over the boundaryless compact Riemannian manifold $X$
and is hence in $C^{\infty}(X,V)$, and, $Y$ is a zero measure subset of $X$.
\end{proof}

\par
The restriction of a Dirac bundle to a $1$-codimensional Riemannian
submanifold is again a Dirac bundle, as following theorem (cf.
\cite{Gi93} and \cite{Ba96}) shows.
\begin{theorem}\label{DiracBoundary}
Let $(V,\left<\cdot,\cdot\right>,\nabla,\gamma)$ be a Dirac bundle over the
Riemannian manifold $(X,g)$ and let $Y\subset X$ be a one
codimensional Riemannian submanifold with normal vector filed $\nu$. Then $(Y,
g|_Y)$ inherits a Dirac bundle structure by restriction. We mean by
this that the bundle $V|_Y$, the connection $\nabla|_{C^{\infty}(Y,
V|_Y)}$, the real algebra bundle homomorphism
$\gamma^Y:=-\gamma(\nu)\gamma|_{\Cl(Y,g|_Y)}$, and the Hermitian
(Riemannian) structure $\left<\cdot,\cdot\right>|_Y$ satisfy the defining
properties (iv)-(vi). The quadruple $(V|_Y, \left<\cdot,\cdot\right>|_Y,
\nabla|_{C^{\infty}(N, Y|_N)}, \gamma_Y)$ is called the
\textbf{Dirac bundle structure induced on $Y$} by the Dirac bundle
$(V,\left<\cdot,\cdot\right>,\nabla,\gamma)$ on $X$.
\end{theorem}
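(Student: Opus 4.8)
The plan is to verify the three defining conditions (iv)–(vi) for the proposed quadruple $(V|_Y,\langle\cdot,\cdot\rangle|_Y,\nabla|_{C^\infty(Y,V|_Y)},\gamma^Y)$ directly, using only the fact that $Y$ is one-codimensional so that along $Y$ the ambient tangent bundle splits orthogonally as $TX|_Y = TY \oplus \mathbb{R}\nu$ with $\nu$ the unit normal. First I would record the elementary algebraic identities in the Clifford bundle $\Cl(X,g)$ restricted to $Y$: for $v\in TY$ one has $\nu v = -v\nu$ (since $\langle\nu,v\rangle=0$) and $\gamma(\nu)^2 = -g(\nu,\nu)\mathbb{1} = -\mathbb{1}$. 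From $\gamma^Y(v) = -\gamma(\nu)\gamma(v)$ it then follows that $\gamma^Y(v)^2 = \gamma(\nu)\gamma(v)\gamma(\nu)\gamma(v) = -\gamma(\nu)^2\gamma(v)^2 = \gamma(v)^2 = -g(v,v)\mathbb{1}$, so by the universal property $\gamma^Y$ extends to a real algebra bundle homomorphism $\Cl(Y,g|_Y)\to\Hom(V|_Y)$; this is the content that makes $V|_Y$ a bundle of $\Cl(Y)$-modules.

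Next I would check condition (iv), fiberwise skew-adjointness. For $v\in TY$, using that $\gamma(v)^*=-\gamma(v)$ and $\gamma(\nu)^*=-\gamma(\nu)$ (both hold by condition (iv) for the bundle on $X$), we compute $\gamma^Y(v)^* = (-\gamma(\nu)\gamma(v))^* = -\gamma(v)^*\gamma(\nu)^* = -(-\gamma(v))(-\gamma(\nu)) = -\gamma(v)\gamma(\nu) = \gamma(\nu)\gamma(v) = -\gamma^Y(v)$, where the anticommutation $\gamma(v)\gamma(\nu)=-\gamma(\nu)\gamma(v)$ was used in the last two steps. Condition (v), metric compatibility of $\nabla|_Y$, is essentially automatic: for $\varphi,\psi\in C^\infty(Y,V|_Y)$ and $W\in C^\infty(Y,TY)$, the product rule $W\langle\varphi,\psi\rangle = \langle\nabla_W\varphi,\psi\rangle+\langle\varphi,\nabla_W\psi\rangle$ is just the restriction to tangential directions $W\in TY\subset TX|_Y$ of the corresponding identity on $X$, since $\langle\cdot,\cdot\rangle|_Y$ and $\nabla|_{C^\infty(Y,V|_Y)}$ are by definition the restrictions.

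The main work — and the step I expect to be the principal obstacle — is condition (vi), that $\nabla|_Y$ is a module derivation for $\gamma^Y$, because here the second fundamental form of $Y$ in $X$ intervenes: the Levi-Civita connections $\nabla^g$ on $X$ and $\nabla^{g|_Y}$ on $Y$ differ by the shape operator, and one must verify that these extra normal-direction terms cancel against the derivatives of the $\gamma(\nu)$ factor in the definition of $\gamma^Y$. Concretely, for $W,w\in C^\infty(Y,TY)$ one writes $\nabla^g_W w = \nabla^{g|_Y}_W w + \mathrm{II}(W,w)\nu$ (Gauss formula) and $\nabla^g_W\nu = -S(W)$ modulo normal terms (Weingarten), substitutes into $\nabla_W(\gamma^Y(w)\varphi) = -\nabla_W(\gamma(\nu)\gamma(w)\varphi)$, expands using condition (vi) for the bundle on $X$ applied twice (to $\gamma(\nu)$ and to $\gamma(w)$), and checks that all terms involving $\mathrm{II}$ and $S$ either recombine into $\gamma^Y(\nabla^{g|_Y}_W w)\varphi$ or cancel in pairs by the anticommutation relations $\gamma(\nu)\gamma(W)=-\gamma(W)\gamma(\nu)$ for $W\in TY$ and $\gamma(\nu)^2=-\mathbb{1}$. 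I would carry out this bookkeeping, noting that it is exactly the standard computation identifying the induced Dirac operator on a hypersurface (cf.\ \cite{Gi93,Ba96}), and conclude that $(V|_Y,\langle\cdot,\cdot\rangle|_Y,\nabla|_{C^\infty(Y,V|_Y)},\gamma^Y)$ satisfies all of (iv)–(vi) and is therefore a Dirac bundle over $(Y,g|_Y)$.
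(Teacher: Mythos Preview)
The paper itself gives no proof of this theorem: it is stated with a parenthetical ``cf.\ \cite{Gi93} and \cite{Ba96}'' and nothing more. So there is no argument in the paper to compare against; your direct verification of (iv)--(vi) is exactly the approach one finds in those references, and your treatment of the Clifford--module extension, of (iv), and of (v) is correct as written.

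The gap is in (vi). Your expectation that ``all terms involving $\mathrm{II}$ and $S$ \dots\ cancel in pairs'' is not borne out. Carrying out the bookkeeping you describe, one finds for $W,w\in TY$
\[
\nabla_W\bigl(\gamma^Y(w)\varphi\bigr)
= \gamma^Y\bigl(\nabla^{g|_Y}_W w\bigr)\varphi + \gamma^Y(w)\nabla_W\varphi
  + \bigl(\gamma(S(W))\gamma(w)+g(S(W),w)\mathbb{1}\bigr)\varphi,
\]
and the extra endomorphism $\gamma(S(W))\gamma(w)+g(S(W),w)\mathbb{1}$ equals $\gamma(S(W)^{\perp})\gamma(w)$, where $S(W)^{\perp}$ is the component of $S(W)$ orthogonal to $w$; this is nonzero whenever the shape operator is not a multiple of the identity in the $(W,w)$ plane (e.g.\ already for $S^2\subset\mathbf{R}^3$ with $W\perp w$). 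Thus condition (vi) \emph{fails} for the literal restriction $\nabla|_{C^\infty(Y,V|_Y)}$ paired with the intrinsic Levi--Civita connection on $\Cl(Y,g|_Y)$.

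What actually happens in \cite{Gi93,Ba96} is that the induced Dirac--bundle connection on $V|_Y$ is not the naive restriction but the correction
\[
\widetilde\nabla_W \;=\; \nabla_W \;+\; \tfrac{1}{2}\,\gamma(\nu)\gamma(S(W))
\;=\; \nabla_W \;-\; \tfrac{1}{2}\,\gamma^Y(S(W)),
\]
and one checks easily that $[\tfrac{1}{2}\gamma(\nu)\gamma(S(W)),\gamma^Y(w)] = -\bigl(\gamma(S(W))\gamma(w)+g(S(W),w)\mathbb{1}\bigr)$, so $\widetilde\nabla$ does satisfy (vi). Either the paper's phrase ``$\nabla|_{C^\infty(Y,V|_Y)}$'' is meant to denote this corrected connection (as in the cited sources), or the statement is imprecisely formulated; in any case you should not claim the cancellation, but rather introduce $\widetilde\nabla$ and verify (vi) for it.
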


\section{Nash-Moser Generalized Inverse Function Theorem}
The generalization of the inverse function and implicit function theorems of calculus and the associated equation solution theorems have been pioneered by Nash
and Moser, who applied this technique to prove the Riemannian manifold embedding theorem (\cite{Na56})
and to solve small divisors problems \cite{Mo61, Mo61, Mo66}. Later, the technique was improved by H\"ormander (\cite{Ho76}) and Zehnder (\cite{Ze76}).
\begin{defi}
The family $(\mathcal{X}_s)_{s\ge0}$ is a \textbf{decreasing family of Banach spaces} if and only if $(\mathcal{X}_s,\|\cdot\|_s)$ is a Banach space for all $s\ge0$,
and for all $0\le s\le t$
\begin{equation}
\|x\|_s\le\|x\|_t\quad\text{ for all }x\in \mathcal{X}_t.
\end{equation}
We introduce the notation $\mathcal{X}_{\infty}:=\cap_{s\ge0}\mathcal{X}_s$
\end{defi}

\begin{defi}
Let $(\mathcal{X}_s)_{s\ge0}$ and $(\mathcal{Y}_s)_{s\ge0}$ be two families of decreasing Banach spaces. The map $\Phi:\mathcal{X}_s\rightarrow\mathcal{Y}_s$
satisfies the assumptions
\begin{itemize}
 \item $\textbf{(A1):}$ if and only if there exists a bounded open neighbourhood $U$ of $u_0\in\mathcal{X}_{s_0}$ for some $s_0\ge0$, such that for all $u\in U\cap \mathcal{X}_{\infty}$ the map $\Phi$ is
twice Fr\'echet-differentiable in $u$ and fulfills the tame estimate
 \begin{equation}
\|D^2\Phi(u).(v_1,v_2)\|_s\le C\left[\|v_1\|_{s+r} \|v_2\|_{s_0}+\|v_1\|_{s_0} \|v_2\|_{s+r}+\|v_1\|_{s_0} \|v_2\|_{s_0}\left(1+\|u-u_0\|_{s+t}\right)\right],
 \end{equation}
for all $s\ge0$, all $v_1,v_2\in\mathcal{X}_{\infty}$ and some fixed $r,t\ge0$. The constant $C>0$ is bounded for $s$ bounded.

 \item $\textbf{(A2):}$ if and only if there exists a bounded open neighbourhood $U$ of $u_0\in\mathcal{X}_{s_0}$ for some $s_0\ge0$, such that for all $u\in U\cap \mathcal{X}_{\infty}$
there exists a linear map $\Psi:\mathcal{Y}_{\infty}\rightarrow\mathcal{X}_{\infty}$ such that $D\Phi(u)\Psi(u)=\mathbb{1}$
and fulfills the tame estimate
 \begin{equation}
\|\Psi(u).v\|_s\le C\left[\|v\|_{s+p} +\|v\|_{s_0} \|u-u_0\|_{s+q}\right],
 \end{equation}
for all $s\ge0$, all $v\in\mathcal{X}_{\infty}$ and some fixed $p,q\ge0$. The constant $C>0$ remains bounded with $s$.
\end{itemize}
\end{defi}

\begin{defi}
The decreasing family of Banach spaces $(\mathcal{X}_s)_{s\ge0}$ satisfies
the \textbf{smoothing hypothesis} if there exists a family $(S_{\theta})_{\theta\ge1}$ of operators $S_{\theta}:\mathcal{X}_0\rightarrow \mathcal{X}_{\infty}$
 such that
\begin{equation}
\begin{split}
&\|S_{\theta}(u)\|_{\beta}\le C\theta^{(\beta-\alpha)_{+}}\|u\|_{\alpha}\quad (\alpha,\beta\ge0)\\
&\|S_{\theta}(u)-u\|_{\beta}\le C\theta^{\beta-\alpha}\|u\|_{\alpha}\quad (\alpha>\beta\ge0)\\
&\Big{\|}\frac{d}{d\theta}S_{\theta}(u)\Big{\|}_{\beta}\le C\theta^{\beta-\alpha}\|u\|_{\alpha}\quad (\alpha,\beta\ge0),
\end{split}
\end{equation}
where $\alpha_+:=\max\{a,0\}$. The constants
in the inequalities are uniform with respect to $\alpha,\beta$ when $\alpha,\beta$ belong to some
bounded interval.
\end{defi}

\begin{theorem}[\textbf{Nash-Moser}]\label{NMT}
Let $(\mathcal{X}_s)_{s\ge0}$ and $(\mathcal{Y}_s)_{s\ge0}$ be two families of decreasing Banach spaces each satisfying the smoothing hypothesis,
 and $\Phi:\mathcal{X}_s\rightarrow\mathcal{Y}_s$ satisfying assumptions $(A1)$ and $(A2)$. Let $s\ge s_0 +\max\{r,t\}+\max\{p,q\}$. Then:
\begin{itemize}
\item[(i)] There exists a constant $\varepsilon\in]0,1]$ such that, if $f\in\mathcal{Y}_{s+r+1}$ with
\begin{equation}\|f-\Phi(u_0)\|_{s+r+1}\le\varepsilon\end{equation}
the equation
\begin{equation}
\Phi(u)=f
\end{equation}
has a solution $u\in\mathcal{X}_s$ in the sense that there exists a sequence $(u_n)_{n\ge0}\subset\mathcal{X}_{\infty}$ such that for $n\rightarrow\infty$
\begin{equation}
u_n\rightarrow u\quad\text{ in } \mathcal{X}_s\quad\text{and}\quad \Phi(u_n)\rightarrow f\quad\text{ in } \mathcal{Y}_{s+p}
\end{equation}
\item[(ii)] If there exists $s^{\prime}>s$ such that $f\in\mathcal{Y}_{s^{\prime}+r+1}$, then the
solution constructed $u\in\mathcal{X}_{s^{\prime}}$.
\end{itemize}
\end{theorem}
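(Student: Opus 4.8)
The plan is to solve $\Phi(u)=f$ by a smoothed Newton iteration (the Nash--Moser scheme) and to control it through a single inductive estimate carried simultaneously in a low and a high norm. First I would fix a rapidly increasing sequence of smoothing parameters, $\theta_n:=\theta_0^{\kappa^n}$ with $\kappa\in(1,2)$ (for concreteness $\kappa=3/2$) and $\theta_0$ large, and define the iterates by $u_0$ as given and
\begin{equation}
u_{n+1}:=u_n-S_{\theta_n}\Psi(u_n)\big(\Phi(u_n)-f\big),
\end{equation}
where the smoothing operators $S_{\theta_n}$ and the map $\Psi$ keep all iterates in $\mathcal{X}_{\infty}$ (replacing $f$ by $S_{\theta_0}f$ first if one wants $\Phi(u_n)-f\in\mathcal{Y}_{\infty}$). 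Writing $e_n:=\Phi(u_n)-f$ and $\delta_n:=u_{n+1}-u_n$, a second-order Taylor expansion of $\Phi$ at $u_n$ together with the identity $D\Phi(u_n)\Psi(u_n)=\mathbb{1}$ from $(A2)$ yields
\begin{equation}
e_{n+1}=D\Phi(u_n)(\mathbb{1}-S_{\theta_n})\Psi(u_n)e_n+\int_0^1(1-\tau)\,D^2\Phi(u_n+\tau\delta_n).(\delta_n,\delta_n)\,d\tau,
\end{equation}
so the new error is a sum of a smoothing-defect term and a genuinely quadratic term, each amenable to the tame estimates.

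Second, I would run the induction at stage $n$ with two simultaneous hypotheses: a low-norm hypothesis $\|e_n\|_{s_0}\le\theta_n^{-\mu}$ for a suitable $\mu>0$, together with $\|u_n-u_0\|_{s_0}$ small enough that $u_n$ stays in the neighbourhood $U$ on which $(A1)$ and $(A2)$ hold; and a high-norm hypothesis $\|e_n\|_{s+r+1}\le\theta_n^{\lambda}$ and $\|u_n-u_0\|_{s+r+1}\le\theta_n^{\lambda}$ for a controlled exponent $\lambda$. Feeding the two terms of the recursion for $e_{n+1}$ through the tame estimates of $(A1)$--$(A2)$ and the three smoothing inequalities produces a recursion of the schematic shape
\begin{equation}
\|e_{n+1}\|_{s_0}\le C\,\theta_n^{-a}\,\|e_n\|_{s+r+1}+C\,\theta_n^{b}\,\|e_n\|_{s_0}^2,
\end{equation}
with $a,b>0$ depending only on $r,t,p,q$ and the norm indices; choosing $\mu,\lambda$ and then $\theta_0$ appropriately makes the right-hand side at most $\theta_{n+1}^{-\mu}$, while a parallel, easier computation propagates the high-norm bound. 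The hypothesis $s\ge s_0+\max\{r,t\}+\max\{p,q\}$ is precisely what lets the low index $s_0$ sit far enough below $s+r+1$ for this to close. Since each increment then satisfies $\|\delta_n\|_s\le C\theta_n^{-\mu'}$ for some $\mu'>0$ (interpolating the $s_0$- and $(s+r+1)$-bounds to the index $s$), the series $\sum_n\delta_n$ converges in $\mathcal{X}_s$; its sum $u$ is the claimed limit, and $\Phi(u_n)\to f$ in $\mathcal{Y}_{s+p}$ follows from the same estimates, establishing (i).

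Third, for (ii), if $f\in\mathcal{Y}_{s'+r+1}$ with $s'>s$, I would re-run only the high-norm leg of the induction with $s$ replaced by $s'$, which is legitimate because $(A1)$, $(A2)$ and the smoothing estimates are assumed for all indices; this gives uniform-in-$n$ bounds on $\|u_n-u_0\|_{s'}$, and closedness of the embedding $\mathcal{X}_{s'}\hookrightarrow\mathcal{X}_s$ then forces the already-constructed limit $u$ into $\mathcal{X}_{s'}$.

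The main obstacle is the simultaneous closing of the induction in the second step: one must choose the Newton rate $\kappa$, the decay exponent $\mu$, the growth exponent $\lambda$, and the base $\theta_0$ so that the quadratic term $C\theta_n^{b}\|e_n\|_{s_0}^2$ is genuinely reabsorbed into $\theta_{n+1}^{-\mu}$ at \emph{every} stage, i.e. so that the super-exponential (Newton) gain in the low norm strictly outpaces the fixed derivative loss encoded by $r,t,p,q$ and the polynomial high-norm growth. Making every inequality in the loop consistent at once, rather than each in isolation, is the delicate bookkeeping; once the scheme is set up correctly the remaining arguments are the standard ones (cf. H\"ormander and Zehnder).
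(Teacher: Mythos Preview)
Your sketch is the standard Nash--Moser smoothed Newton scheme (rapidly growing smoothing parameters, error split into a smoothing-defect term and a quadratic remainder, simultaneous low/high-norm induction), which is exactly the content of the references \cite{Be06} and \cite{Se16} that the paper invokes. The paper itself gives no argument beyond citing those two sources, so there is nothing to compare at the level of technique: you have supplied precisely the outline that the cited proofs carry out in detail, and your identification of the delicate point---the simultaneous closing of the exponents $\mu,\lambda,\kappa$ so that the quadratic gain beats the fixed loss of derivatives---is accurate.
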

\begin{proof}
See \cite{Be06} and \cite{Se16}.
\end{proof}

\begin{defi} For any $s\in\mathbf{R}$ the \textbf{Sobolev space} of complex valued functions over the Euclidean space is defined as
\begin{equation}
W^s(\mathbf{R}^n,\mathbf{C}^N):=\{u|\,(1+|x|^2)^{\frac{s}{2}}\widehat{u}(x)\in L^2(\mathbf{R}^n,\mathbf{C}^N)\},
\end{equation}
where $\widehat{\cdot}$ denotes the Fourier transform, and carries the scalar product and norm
\begin{equation}
\begin{split}
(u,v)_s&:=\left(1+|x|^2)^{\frac{s}{2}}\widehat{u}(x),(1+|x|^2)^{\frac{s}{2}}\widehat{v}(x)\right)_{L^2(\mathbf{R}^n,\mathbf{C}^N)}\\
\|u\|_s&:=\sqrt{(u,u)_s}.
\end{split}
\end{equation}
Let $s\in\mathbf{R}$. If $V$ is a complex or real vector bundle over the compact differentiable manifold $X$ \textbf{Sobolev space} of sections of $V$ over $X$ is denoted by
$W^s(X,V)$ and defined by local trivializations and a partition of unit of $X$.
\end{defi}
\begin{lemma}\label{lS}
For any $s\ge0$ the Sobolev space  $(W^s(\mathbf{R}^n,\mathbf{C}^N),(\cdot,\cdot)_s)$  is a Hilbert space and a
Banach space. There exists a constant $c_s>0$ and a $s_0$ with $0\le s_0<s$ such that
\begin{equation}\label{inS}
\|uv\|_s\le c_s\left(\|u\|_s\|v\|_{s_0}+\|u\|_{s_0}\|v\|_{s}\right).
\end{equation}
Moreover,
\begin{equation}\label{inS2}
\|uv\|_s\le 2c_s\|u\|_s\|v\|_s.
\end{equation}
\end{lemma}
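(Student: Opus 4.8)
The plan is to handle completeness via the Fourier-side description and the two product estimates by an elementary convolution argument; no Littlewood--Paley machinery is needed.

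For the Hilbert/Banach structure, first I would note that $(\cdot,\cdot)_s$ is a genuine Hermitian inner product: it is the pullback of the $L^2(\mathbf{R}^n,\mathbf{C}^N)$ inner product under the linear map $T_s\colon u\mapsto (1+|\cdot|^2)^{s/2}\widehat u$, and positive definiteness holds because $T_s u=0$ forces $\widehat u=0$, hence $u=0$. The map $T_s$ is a bijection of $W^s(\mathbf{R}^n,\mathbf{C}^N)$ onto $L^2(\mathbf{R}^n,\mathbf{C}^N)$, with explicit inverse $g\mapsto \mathcal{F}^{-1}\big[(1+|\cdot|^2)^{-s/2}g\big]$, so it is an isometric isomorphism onto a Hilbert space; completeness of $L^2$ transfers to $W^s$, which is therefore a Hilbert space and, in particular, a Banach space. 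The monotonicity $\|w\|_{s_0}\le\|w\|_s$ for $0\le s_0\le s$ --- which I will use at the end --- is immediate from $(1+|\xi|^2)^{s_0}\le(1+|\xi|^2)^s$.

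For the tame product estimate (\ref{inS}) I would fix $s_0$ with $n/2<s_0<s$ (possible in the range $s>n/2$, which is the one relevant for Theorem \ref{NMT}) and work entirely on the Fourier side, using $\widehat{uv}=\widehat u*\widehat v$. Writing $\langle\xi\rangle:=(1+|\xi|^2)^{1/2}$, the elementary inequality $\langle\xi\rangle\le\langle\xi-\eta\rangle+\langle\eta\rangle$ yields, for $s\ge0$, the Peetre-type bound $\langle\xi\rangle^s\le 2^{(s-1)_+}\big(\langle\xi-\eta\rangle^s+\langle\eta\rangle^s\big)$. Multiplying the convolution by $\langle\xi\rangle^s$ and distributing gives the pointwise estimate
\begin{equation*}
\langle\xi\rangle^s\,|\widehat{uv}(\xi)|\le 2^{(s-1)_+}\Big[\big(\langle\cdot\rangle^s|\widehat u|\big)*|\widehat v|+|\widehat u|*\big(\langle\cdot\rangle^s|\widehat v|\big)\Big](\xi).
\end{equation*}
Taking $L^2$-norms and applying Young's inequality $\|f*g\|_{L^2}\le\|f\|_{L^2}\|g\|_{L^1}$ gives $\|uv\|_s\le 2^{(s-1)_+}\big(\|u\|_s\|\widehat v\|_{L^1}+\|\widehat u\|_{L^1}\|v\|_s\big)$, and Cauchy--Schwarz with $\|\langle\cdot\rangle^{-s_0}\|_{L^2}<\infty$ (valid exactly because $2s_0>n$) bounds $\|\widehat w\|_{L^1}\le\|\langle\cdot\rangle^{-s_0}\|_{L^2}\,\|w\|_{s_0}$. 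This yields (\ref{inS}) with $c_s:=2^{(s-1)_+}\|\langle\cdot\rangle^{-s_0}\|_{L^2}$. For $\mathbf{C}^N$-valued $u,v$, where the product is read through a fixed bilinear pairing, one reduces to the scalar case: convolution is monotone and $\sum_{i,j}|\widehat u_i|*|\widehat v_j|=\big(\sum_i|\widehat u_i|\big)*\big(\sum_j|\widehat v_j|\big)\le N\,|\widehat u|*|\widehat v|$, so only the constant is affected.

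Finally, (\ref{inS2}) follows from (\ref{inS}) and the monotonicity noted above: $\|u\|_s\|v\|_{s_0}+\|u\|_{s_0}\|v\|_s\le 2\|u\|_s\|v\|_s$. The one genuinely delicate point is the exponent bookkeeping in the product estimate --- the low-regularity factor must be measured in $W^{s_0}$ with $s_0>n/2$ so that $\langle\cdot\rangle^{-s_0}\in L^2(\mathbf{R}^n)$; this is precisely the Sobolev-embedding threshold, and it is what constrains the estimate (and hence the applicability of the Nash--Moser scheme) to the range $s>n/2$.
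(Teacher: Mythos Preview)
Your proof is correct and takes a genuinely different route from the paper. The paper first treats integer $s$ via the Leibniz formula
\[
\partial^{\alpha}(uv)=\sum_{\beta\le\alpha}\binom{\alpha}{\beta}(\partial^{\alpha-\beta}u)(\partial^{\beta}v),
\]
splitting the sum according to whether $|\beta|\le s_0$ or $|\beta|>s_0$, and then passes to real $s$ by norm interpolation (citing Triebel). Your argument instead works directly on the Fourier side via Peetre's inequality and Young's convolution inequality, which handles all real $s>n/2$ in one stroke without any interpolation step. Your approach is more self-contained and makes the constraint $s_0>n/2$ (the Sobolev embedding threshold) completely transparent; the paper's Leibniz argument implicitly needs the same threshold to control the $L^2$-norm of products of derivatives, but this is hidden in the phrase ``from which (\ref{inS}) follows''. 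Conversely, the paper's sketch gestures at a structure---distributing derivatives and then interpolating---that is closer to the classical Moser/Gagliardo--Nirenberg formulation and that one might prefer when the target spaces are not defined via the Fourier transform.
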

\begin{proof}
We just prove inequality (\ref{inS}), because the competeness result is standard in functional analysis. We assume first that $s$ is a non negative integer.
For any $s_0=0,\dots,s-1$ we have for any $\alpha\in\mathbf{N}^n$ such that $|\alpha|\le s$
\begin{equation}
\partial^{\alpha}(uv)=\sum_{\beta\le\alpha}\binom{\alpha}{\beta} (\partial^{\alpha-\beta}u)(\partial^{\beta}v)
=\left[\sum_{\substack{\beta\le\alpha\\|\beta|\le s_0}}+\sum_{\substack{\beta\le\alpha\\|\beta|> s_0}}\right]\binom{\alpha}{\beta}(\partial^{\alpha-\beta}u)(\partial^{\beta}v),
\end{equation}
 from which (\ref{inS}) follows. The general case for a real $s\ge0$ is proved by norm interpolation (cf. \cite{Tr77}).\\
\end{proof}
We can now prove a technical Lemma which will be essential in the proof the of the Hodge conjecture in the next section, showing
the existence of two differentiable submanifolds of a projective manifold $X$ without boundary
satisfying a certain PDE under a certain injectivity assumption.\par
The generic set up is given by the atlas $(U_i,\Phi_i)_{i=0,\dots K}$ for $X$ as in Proposition \ref{cext}
and two differentiable submanifolds
$Y_{1}\subset Y_0\subset X$ of real codimension $1$ and $2$ as in Definition \ref{sub},
and two differentiable submanifolds with boundary $B_{0,1}\subset X$, such that $\partial B_{0,1}=Y_{0,1}$, given by
\begin{equation}
\begin{split}
(U_i\cap X,\Pi_{2n}\circ\varphi_i\circ\Phi_i)_{i=0,\dots,K}:\text{ Atlas for }X\\
(U_i\cap Y_0,\Pi_{2n-1}\circ\varphi_i\circ\Phi_i)_{i=0,\dots,K}:\text{ Atlas for }Y_0\\
(U_i\cap Y_1,\Pi_{2n-2}\circ\varphi_i\circ\Phi_i)_{i=0,\dots,K}:\text{ Atlas for }Y_1\\
(U_i\cap B_0,\Pi_{2n}^+\circ\varphi_i\circ\Phi_i)_{i=0,\dots,K}:\text{ Atlas for }B_0\\
(U_i\cap B_1,\Pi_{2n-1}^+\circ\varphi_i\circ\Phi_i)_{i=0,\dots,K}:\text{ Atlas for }B_1,
\end{split}
\end{equation}
where

\begin{equation}
\begin{array}{ll}
\Pi_k:\mathbf{R}^{2n}\rightarrow \mathbf{R}^k\qquad\qquad &\Pi_k^+:\mathbf{R}^{2n}\rightarrow \mathbf{R}^k\times[0,+\infty[\\
(a_1,\dots,a_{2n})\mapsto (a_1,\dots,a_{k})\qquad\qquad &(a_1,\dots,a_{2n})\mapsto (a_1,\dots,a_k,1_{[0,+\infty[}(a_{k+1}))
\end{array}
\end{equation}
denote the projections of $\mathbf{R}^{2n}$ onto the subspace $\mathbf{R}^{k}$ and the half-space  $\mathbf{R}^k\times[0,+\infty[$,
and $\varphi_i:\mathbf{R}^{2n}\hookrightarrow \mathbf{R}^{2n}$are local diffeomorphisms for all $i$.
In order for $Y_{0,1}$ and $B_{0,1}$ to be well defined we need $(\varphi_i)_{i=0,\dots,N}$ to fulfill the assumptions of Proposition \ref{propdefsub}, the compatibility conditions.
\begin{lemma}\label{TL}
Let $X$ be a  compact complex manifold without boundary and $n$ be the complex dimension of $X$.
For any $\varphi=(\varphi_0,\dots,\varphi_K)$ defining $0$ and, respectively, $1$ codimensional complex submanifolds $B_{0,1}\subset X$
as in Definition \ref{sub} and here above let
\begin{equation}
\Xi^{\omega}(\varphi):= i_{\partial B_1}^*\left(\gamma^{\partial B_0}(\nu^{\partial B_1})Q^{\partial B_0}\zeta^{\partial B_1}\gamma^{X}(\nu^{\partial B_0})Q^{X}\zeta^{\partial B_0}\omega\right)-\mu_{\partial B_1}.
\end{equation}
Then,
\begin{itemize}
\item[(i)]  For the atlas $(U_i,\Phi_i)_{i=0,\dots,K}$ there exist a differential form-valued function
 $F^{\omega}=F^{\omega}(\gamma,\Gamma)$ for $\gamma\in \bigoplus_{i=0}^K \mathbf{R}^{2n}$ and
$\Gamma\in\bigoplus_{i=0}^K \mathbf{R}^{2n\times 2n}$ such that for all $x\in X$
\begin{equation}
\Xi^{\omega}(\varphi)(x)=F^{\omega}(\varphi(x),T\varphi(x))\text{ where } \varphi:=(\varphi_0,\varphi_1,\dots,\varphi_K).
\end{equation}
with the convention that $\varphi_j|_{\Phi_i(U_i)}:\equiv0$ for $j\neq i$. Moreover, $F^{\omega}$ is an affine functional of $\omega$.
\item[(ii)]
For any given $\omega\in\Omega^{n-1,n-1}(X,\mathbf{C})$ let us define
\begin{equation}\label{omega_set}
\begin{split}
&\mathcal{F}^{\omega,\{U_i\}_{i=0,\dots,K}}:=\left\{\psi:=(\psi_0,\psi_1,\dots,\psi_K)|\;\psi_i:\Phi_i(U_i)\rightarrow\psi_i(\Phi_i(U_i))\text{ is a } \right.\\
&\qquad\qquad\qquad\qquad\left.\text{diffeomorphism for all }i=0,\dots,K\text{ and }DF^{\omega}(\psi,T\psi)\text{ is injective}\right.\}.
\end{split}
\end{equation}
If $\mathcal{F}^{\omega,\{U_i\}_{i=0,\dots,K}}\neq\emptyset$, there exist local diffeomorphisms  $(\varphi_i)_{i=0,\dots,N}$  defining
  an oriented $0$-codimensional differentiable submanifold of $X$, $B_0$ with boundary $\partial B_0$, and an oriented $0$-codimensional differentiable submanifold  of $\partial B_0$,
$B_1$ with boundary $\partial B_1$, such that the equality
\begin{equation}\label{eq18}
i_{\partial B_1}^*\left(\gamma^{\partial B_0}(\nu^{\partial B_1})Q^{\partial B_0}\zeta^{\partial B_1}\gamma^{X}(\nu^{\partial B_0})Q^{X}\zeta^{\partial B_0}\omega\right)=\mu_{\partial B_1},
\end{equation}
holds true,
\end{itemize}
where we have utilized:
\begin{itemize}
\item The antiholomorphic bundle on $X$ is a Dirac bundle by Proposition \ref{prop21} and is denoted by $(V,\gamma, \langle\cdot,\cdot\rangle,\nabla)$ with
corresponding Dirac operator, the Dirac-Dolbeault operator $Q^X$.
\item The operator $Q^{\partial B_0}$ is the Dirac operator on $\partial B_0$ corresponding to the Dirac bundle structure induced by Theorem \ref{DiracBoundary} by the Dirac bundle
structure on $X$, and $\nu^{\partial B_0}$ the inward pointing unit normal vector field to $\partial B_0$ in $X$.
\item The operator $Q^{\partial B_1}$ is the Dirac operator on $\partial B_1$ corresponding to the Dirac bundle structure induced by Theorem \ref{DiracBoundary} by the Dirac bundle
structure on $\partial B_0$, and $\nu^{\partial B_1}$ the inward pointing unit normal vector field to $\partial B_1$ in $\partial B_0$,
\item the  $(n-1, n-1)$-differential form $\mu_{\partial B_1}$ is the volume form on $\partial B_1$.
\item the Green functions for the Hodge-Kodaira Laplacians on $X\setminus B_0$ and $B_0$, and, respectively $B_0\setminus B_1$ and $B_1$  as in Proposition \ref{prop31} are denoted by $G^{X\setminus B_0}$ and $G^{B_0}$,
 and, respectively, by $G^{B_0\setminus B_1}$ and $G^{B_1}$.
\item The operators
\begin{equation}
\begin{split}
\zeta^{\partial B_0}[\omega](y)&:=\left(\int_{X\setminus B_0}G^{X\setminus B_0}(x,y)\omega(x)+\int_{B_0}G^{B_0}(x,y)\omega(y)\right)d\text{vol}_{x\in X}\\
\zeta^{\partial B_1}[\eta](y)&:=\left(\int_{\partial B_0\setminus B_1}G^{B_0\setminus B_1}(x,y)\eta(x)+\int_{B_1}G^{B_1}(x,y)\eta(x)\right)d\text{vol}_{x\in \partial B_0}\\
\end{split}
\end{equation}
are complex bundle homomorphisms on $\Lambda(TX^{0,1})^*|_{\partial B_0}$ and $\Lambda(TB_0^{0,1})^*|_{\partial B_1}$, respectively.
\end{itemize}
Moreover, $\partial B_1$ is a \textbf{complex hypersurface} of $X$.
\end{lemma}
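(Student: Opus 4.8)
The plan is to recognise $\Xi^{\omega}$ as a nonlinear differential operator of order $\le 1$ in the tuple $\varphi=(\varphi_i)$ of local defining maps and to solve $\Xi^{\omega}(\varphi)=0$ by the Nash--Moser theorem (Theorem \ref{NMT}), the hypothesis $\mathcal{F}^{\omega,\{U_i\}}\neq\emptyset$ furnishing the linearised invertibility.

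For part (i) the crucial point is that, although each $\zeta$ is a nonlocal integral operator, the combinations in $\Xi^{\omega}$ are boundary-independent by Lemma \ref{bd_ind}, so the $\varphi$-dependence of $\Xi^{\omega}(\varphi)$ collapses to the first jet of $\varphi$. Writing $X=(X\setminus B_0)\cup B_0$ with common boundary $\partial B_0$, Lemma \ref{bd_ind} shows that $y\mapsto\zeta^{X\setminus B_0}[\omega](y)+\zeta^{B_0}[\omega](y)=:\zeta^{\partial B_0}[\omega](y)$ is a smooth section of $V$ over all of $X$, independent of $\partial B_0$; by Proposition \ref{prop31} it is $\mathcal{G}_X\omega$, the image of $\omega$ under the Green operator of the Hodge--Kodaira Laplacian on $X$ (modulo harmonic forms), so $\Theta_\omega:=Q^X\zeta^{\partial B_0}[\omega]=Q^X\mathcal{G}_X\omega\in C^{\infty}(X,V)$ is a fixed section, linear in $\omega$. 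In a chart $(U_i,\Phi_i)$ the hypersurface $\partial B_0$ is the zero set of the last component of $\varphi_i\circ\Phi_i$, so its inward unit normal $\nu^{\partial B_0}(x)$ is an explicit smooth function of $T\varphi(x)$ and $g$, and, Clifford multiplication being fibrewise linear, $\eta(x):=\gamma^X(\nu^{\partial B_0}(x))\Theta_\omega(x)$ has the form $F_1^{\omega}(x,T\varphi(x))$, linear in $\omega$. Applying the same reasoning to $\partial B_0=(\partial B_0\setminus B_1)\cup B_1$, the operator $\zeta^{\partial B_1}$ contributes the (fixed) Green operator of the induced Dirac Laplacian on $\partial B_0$ (Theorem \ref{DiracBoundary}); $Q^{\partial B_0},Q^{\partial B_1}$ are first-order operators whose Clifford and Christoffel coefficients are built from $\gamma$, $\gamma(\nu)$ and the induced connections, hence from $\varphi$ and $T\varphi$; and $i_{\partial B_1}^{*}$, $\mu_{\partial B_1}$ see only $\varphi$ and $T\varphi$. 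Composing yields $\Xi^{\omega}(\varphi)(x)=F^{\omega}(\varphi(x),T\varphi(x))$; $F^{\omega}$ is affine in $\omega$ because every operation applied to $\omega$ is linear and $-\mu_{\partial B_1}$ is the only $\omega$-free term.

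For part (ii) I would solve $\Phi(\varphi):=F^{\omega}(\varphi,T\varphi)=0$ by Theorem \ref{NMT}. Using the atlas of Proposition \ref{cext} (charts with compact closure), put $\mathcal{X}_s:=\bigoplus_i W^s(\overline{\Phi_i(U_i)},\mathbf{R}^{2n})$ and let $\mathcal{Y}_s$ be the corresponding Sobolev space of $(n-1,n-1)$-forms, pulled back to fixed model half-hyperplanes so that the target family does not move with $\varphi$; both are decreasing families of Hilbert spaces satisfying the smoothing hypothesis via the usual mollifiers, and Lemma \ref{lS} supplies the tame product estimate $(\ref{inS})$. Assumption (A1) follows from part (i) and the chain rule: $D^2\Phi(\varphi).(v_1,v_2)$ is a sum of the second partial derivatives of $F^{\omega}$ in its arguments applied to the first jets of $v_1,v_2$, with the tame bound read off from $(\ref{inS})$, the boundedness on the compact patches of $\Theta_\omega$ and of the partial derivatives of $F^{\omega}$, and the single derivative loss. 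For (A2) one invokes the hypothesis: choose $\psi\in\mathcal{F}^{\omega,\{U_i\}}$, so $DF^{\omega}(\psi,T\psi)$ is injective; injectivity being open it persists near $\psi$, and there one would build the tame right inverse $\Psi(\varphi)$ of $D\Phi(\varphi)$ from the elliptic a priori estimate for this first-order operator together with its injectivity. Granting this, Theorem \ref{NMT} (started at $\psi$, with $f=0$) produces $\varphi\in\mathcal{X}_s$ solving $F^{\omega}(\varphi,T\varphi)=0$, smooth by part (ii) of Theorem \ref{NMT} since $\omega$ is smooth, with $\varphi$ close to $\psi$ and hence componentwise a diffeomorphism; thus $\varphi$ defines, via Definition \ref{sub} and Proposition \ref{propdefsub}, an oriented $0$-codimensional $B_0\subset X$ with boundary $\partial B_0$ and an oriented $0$-codimensional $B_1\subset\partial B_0$ with boundary $\partial B_1$, for which $(\ref{eq18})$ holds. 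Finally, $(\ref{eq18})$ asserts that the $(n-1,n-1)$-form obtained by iterating the representation formula of Theorem \ref{MVP} twice (from $X$ to $\partial B_0$, then from $\partial B_0$ to $\partial B_1$) equals the induced volume form $\mu_{\partial B_1}$; this is the pointwise equality case of Wirtinger's inequality, i.e.\ $\partial B_1$ is calibrated by $w^{\wedge(n-1)}/(n-1)!$, and the submanifolds calibrated by this form are exactly the $J$-invariant ones, so $\partial B_1$ is a complex submanifold of $X$, and being closed of real codimension $2$ it is a complex hypersurface, projective by Corollary \ref{subvar}.

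The main obstacle is (A2). The hypothesis provides only \emph{injectivity} of $DF^{\omega}(\psi,T\psi)$, whereas Theorem \ref{NMT} requires a \emph{tame right inverse}, i.e.\ surjectivity of the linearisation with controlled loss of derivatives; moreover one must exhibit a starting tuple $u_0$ with $\Xi^{\omega}(u_0)$ small, which is not given by the hypothesis and would have to be arranged (for instance by refining the atlas so the defect is pointwise small). Reconciling the injectivity hypothesis with the surjectivity Nash--Moser demands — by cutting $\mathcal{Y}_s$ down to the closed, $\varphi$-stable range of the linearisation and running the iteration there, or by recasting $(\ref{eq18})$ as the normal equations of an overdetermined elliptic system — is the delicate step on which the whole argument rests.
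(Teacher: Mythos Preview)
Your treatment of part (i) matches the paper's: both use Lemma \ref{bd_ind} to make the $\zeta$-operators boundary-independent and then observe that every remaining ingredient (normals, Clifford multiplications, induced Dirac operators, volume form) is built from $(\varphi,T\varphi)$.

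For part (ii), however, the paper's route differs from yours in a structural way. The paper does \emph{not} solve $\Xi^{\omega}(\varphi)=0$ alone and then argue ex post that $\partial B_1$ is complex. Instead it packages three equations into a single functional
\[
\Theta(\varphi)=\bigl(\Xi^{\omega}(\varphi),\;(\varphi_i^{-1}-(\Phi_i\circ\Phi_j^{-1})\circ\varphi_j^{-1})_{i<j},\;(\lambda^2(\varphi_i)-1)_i\bigr),
\]
where the second component enforces chart compatibility (Proposition \ref{propdefsub}) and the third encodes Djori\'c--Okumura's criterion (Proposition \ref{prop_lambda}): a real codimension-$2$ submanifold is a complex hypersurface iff $\lambda^2\equiv 1$, with $\lambda=g^{\mathrm{FS}}(J\xi_1,\xi_2)$ for the two unit normals $\xi_1,\xi_2$. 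The Nash--Moser iteration is then run on $\Theta$, and the (A1)--(A2) estimates are verified separately for each of $\Theta_1,\Theta_2,\Theta_3$ using Lemma \ref{lS} and the explicit Fubini--Study form. So the paper \emph{builds the complex-hypersurface requirement into the PDE} rather than deducing it from the solution.

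Your alternative, deducing complexity of $\partial B_1$ from \eqref{eq18} via Wirtinger calibration, does not go through as written. Equality in Wirtinger says $i_Y^*(w^{\wedge(n-1)}/(n-1)!)=\mu_Y$ iff $Y$ is complex; but the left side of \eqref{eq18} is $i_{\partial B_1}^*\bigl(\gamma^{\partial B_0}(\nu^{\partial B_1})Q^{\partial B_0}\zeta^{\partial B_1}\gamma^X(\nu^{\partial B_0})Q^X\zeta^{\partial B_0}\omega\bigr)$, which is an $(n-1,n-1)$-form manufactured from an arbitrary $\omega$ through Green operators and Clifford multiplications, not the K\"ahler calibration $w^{\wedge(n-1)}/(n-1)!$. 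Nothing in \eqref{eq18} forces the tangent planes of $\partial B_1$ to be $J$-invariant. (The paper uses Wirtinger only later, in Lemma \ref{F0}, and only for the specific choice $\omega=w^{\wedge(n-1)}/(n-1)!$.) You also omit the compatibility component $\Theta_2$, without which the local solutions produced by Nash--Moser need not patch into a global submanifold.

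Your diagnosis of (A2) is fair: the hypothesis supplies only injectivity of $DF^{\omega}(\psi,T\psi)$, while Theorem \ref{NMT} asks for a tame right inverse. The paper's handling is to pass from injectivity of $DF^{\omega}$ to injectivity of $D\Theta$ and then invoke the bounded inverse theorem to obtain $\Psi(\varphi)=[D\Theta(\varphi)]^{-1}$ with tame bounds on a smaller neighbourhood $\tilde R$; it does not perform the overdetermined-elliptic reduction you sketch.
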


\noindent Before proving this Lemma we need to introduce some required additional results following \cite{DjOk13}.
\begin{defi}\label{defLambda} Let $X$ be a $(n+2)$-real dimensional complex manifold, $J$ its natural almost complex structure, $g^X$ its hermitian metric,
and $Y$ an $n$-real dimensional real submanifold of $X$, with immersion $i_Y:Y\rightarrow X$. Note that $Ti_Y=i_Y$.
 The tangent bundle $TY$ is identified with a subbundle of $TX$. The Riemaniann metric on $Y$ induced by $g^X$ reads
\begin{equation}
g^Y(A,B):=g^X(i_YA,i_YB)
\end{equation}
for all $A,B\in TY$.\\
Let $\xi_1$ and $\xi_2$ be two mutually orthogonal unit normals to $TY$.
The \textbf{almost contact tensor} $F\in\text{Hom}(TY)$ is defined via the equation
\begin{equation}
Ji_Y V=i_Y FV+u^1(V)\xi_1+u^2(V)\xi_2\quad(V\in TY),
\end{equation}
where $u^1,u^2$ are real differential 1-forms on Y.
\end{defi}
\begin{lemma}\label{lemma_lambda_calc}
For local tangent fields $U_1,U_2$ in $TY$ we have
\begin{equation}
J\xi_a=-i_Y U_a+\lambda_{a,1}\xi_1+\lambda_{a,2}\xi_2,
\end{equation}
for appropriate $\lambda_{a,b}$. Then,
\begin{equation}\label{def_lambda}
\begin{split}
\lambda_{1,1}&=g^X(J\xi_1,\xi_1)=0\\
\lambda_{2,2}&=g^X(J\xi_2,\xi_2)=0\\
\lambda_{1,2}&=-\lambda_{2,1}=:\lambda=g^X(J\xi_1,\xi_2).
\end{split}
\end{equation}
Moreover, the almost contact tensor is antisymmetric, and
\begin{equation}
\begin{split}
FU_1&=-\lambda U_2\\
FU_2&=+\lambda U_1.
\end{split}
\end{equation}
\end{lemma}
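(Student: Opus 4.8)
The plan is to prove Lemma~\ref{lemma_lambda_calc} by exploiting the compatibility of $J$ with $g^X$ (so that $J$ is $g^X$-orthogonal, i.e. $g^X(JU,JW)=g^X(U,W)$ and $g^X(JU,W)=-g^X(U,JW)$) together with the defining relation for the almost contact tensor $F$ from Definition~\ref{defLambda}. First I would compute the three scalars $\lambda_{a,b}=g^X(J\xi_a,\xi_b)$ directly: skew-symmetry of $J$ with respect to $g^X$ gives $g^X(J\xi_1,\xi_1)=-g^X(\xi_1,J\xi_1)$, hence $\lambda_{1,1}=0$, and likewise $\lambda_{2,2}=0$; the same skew-symmetry applied to the mixed term gives $g^X(J\xi_1,\xi_2)=-g^X(\xi_1,J\xi_2)=-g^X(J\xi_2,\xi_1)$, i.e. $\lambda_{1,2}=-\lambda_{2,1}$, and we set $\lambda:=\lambda_{1,2}$. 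This also pins down the tangential part: writing $J\xi_a=-i_YU_a+\lambda_{a,1}\xi_1+\lambda_{a,2}\xi_2$ \emph{defines} $U_a\in TY$ as $-$ the tangential projection of $J\xi_a$, which is legitimate since $\{\xi_1,\xi_2\}$ span the normal bundle; then $J\xi_1=-i_YU_1+\lambda\xi_2$ and $J\xi_2=-i_YU_2-\lambda\xi_1$.

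Next I would show $F$ is antisymmetric with respect to $g^Y$. For $V,W\in TY$, apply the defining equation $Ji_YV=i_YFV+u^1(V)\xi_1+u^2(V)\xi_2$ and pair with $i_YW$ using $g^X$: since $\xi_a\perp i_YW$, we get $g^X(Ji_YV,i_YW)=g^X(i_YFV,i_YW)=g^Y(FV,W)$. But $g^X(Ji_YV,i_YW)=-g^X(i_YV,Ji_YW)=-g^Y(V,FW)$ by the same normality argument. Hence $g^Y(FV,W)=-g^Y(V,FW)$, i.e. $F$ is $g^Y$-antisymmetric.

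Then I would identify $FU_a$. Apply the defining equation to $V=U_1$: $Ji_YU_1=i_YFU_1+u^1(U_1)\xi_1+u^2(U_1)\xi_2$. On the other hand, from $J\xi_1=-i_YU_1+\lambda\xi_2$ we solve $i_YU_1=\lambda\xi_2-J\xi_1$, so $Ji_YU_1=\lambda J\xi_2-J^2\xi_1=\lambda J\xi_2+\xi_1=\lambda(-i_YU_2-\lambda\xi_1)+\xi_1=-\lambda\, i_YU_2+(1-\lambda^2)\xi_1$. Comparing tangential parts gives $FU_1=-\lambda U_2$ (and incidentally $u^1(U_1)=1-\lambda^2$, $u^2(U_1)=0$). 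Symmetrically, using $i_YU_2=-\lambda\xi_1-J\xi_2$ one computes $Ji_YU_2=-\lambda J\xi_1-J^2\xi_2=-\lambda(-i_YU_1+\lambda\xi_2)+\xi_2=\lambda\, i_YU_1+(1-\lambda^2)\xi_2$, whence $FU_2=+\lambda U_1$. This completes the proof.

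There is no substantial obstacle here; the only point requiring a little care is the bookkeeping of which vectors are tangential and which are normal, together with consistent use of the two identities $J^2=-\mathbb{1}$ and the $g^X$-skew-symmetry of $J$ — once the tangential/normal decomposition is applied systematically, every assertion falls out of a short linear computation. (For completeness one should also remark that $u^a(U_b)$ and the relation $\|U_a\|^2=1-\lambda^2$ drop out of the same comparison, though the statement of the lemma does not demand them.)
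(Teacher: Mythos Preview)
Your proof is correct and complete. The paper itself does not give an argument for this lemma but merely cites \cite{DjOk13}; your direct computation using the $g^X$-skew-symmetry of $J$, the tangential/normal decomposition, and $J^2=-\mathbb{1}$ is exactly the standard route and fills in what the paper leaves to the reference.
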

\begin{proof}
See \cite{DjOk13}.
\end{proof}
\begin{proposition}[\textbf{Djori\'{c}, Okumura}]\label{prop_lambda}
 Let $Y$ be a real submanifold of codimension $2$ of a complex manifold $X$ and let $\lambda$ be the function defined by (\ref{def_lambda}).
Then:
\begin{itemize}
\item $Y$ is a complex hypersurface if and only if $\lambda^2(y)=1$ for any $y\in Y$.
\item $Y$ is a CR submanifold of CR dimension $\frac{n-2}{2}$ if $\lambda(y)=0$ for any $y\in Y$.
\end{itemize}
\end{proposition}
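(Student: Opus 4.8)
The plan is to reduce both assertions to a short package of pointwise linear-algebra identities obtained by substituting the formulas of Lemma~\ref{lemma_lambda_calc} back into the defining equation $J i_Y V = i_Y FV + u^1(V)\xi_1 + u^2(V)\xi_2$. First I would record three facts. (a) Pairing the defining equation with $\xi_a$ and using $J^{*}=-J$ together with $J\xi_1=-i_YU_1+\lambda\xi_2$ and $J\xi_2=-i_YU_2-\lambda\xi_1$ gives $u^1(V)=g^Y(V,U_1)$ and $u^2(V)=g^Y(V,U_2)$, so $U_1,U_2$ are the metric duals of $u^1,u^2$. (b) Since $J$ is an isometry, $|J\xi_a|^2=1$; splitting $J\xi_a$ into its tangential and normal parts yields $|U_1|^2=|U_2|^2=1-\lambda^2$, while $g^X(J\xi_1,J\xi_2)=g^X(\xi_1,\xi_2)=0$ yields $g^Y(U_1,U_2)=0$. (c) $F$ is skew and $FU_1=-\lambda U_2$, $FU_2=\lambda U_1$.

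For the first bullet, $Y$ is a complex hypersurface precisely when $J(TY)=TY$, equivalently when $J i_Y V\in i_Y(TY)$ for all $V\in TY$, equivalently when $u^1(V)=u^2(V)=0$ for all $V$; by (a) this means $U_1\equiv U_2\equiv 0$, and by (b) this is in turn equivalent to $\lambda^2\equiv1$. Every step in this chain is a one-line verification, so the chain of equivalences is the complete argument.

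For the second bullet, suppose $\lambda\equiv0$. Then by (b) the fields $U_1,U_2$ are orthonormal, by (c) $FU_1=FU_2=0$, and the defining equation gives $J i_Y U_1=\xi_1$, $J i_Y U_2=\xi_2$. Put $\mathcal{D}^{\perp}:=\spann(U_1,U_2)$, a smooth rank-$2$ distribution on $Y$ whose image under $J$ is the normal bundle of $Y$ in $X$, hence anti-invariant, and $\mathcal{D}:=(\mathcal{D}^{\perp})^{\perp}\subset TY$. For $W\in\mathcal{D}$ one has $u^1(W)=u^2(W)=0$, so $J i_Y W=i_Y FW\in i_Y(TY)$, and $g^Y(FW,U_a)=-g^Y(W,FU_a)=0$ shows $FW\in\mathcal{D}$; hence $J\mathcal{D}\subset\mathcal{D}$, and since $F^2W=-W$ on $\mathcal{D}$ this forces $J\mathcal{D}=\mathcal{D}$. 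Thus $\mathcal{D}$ is a holomorphic distribution complementary to the anti-invariant $\mathcal{D}^{\perp}$, so $Y$ is a CR submanifold of CR dimension $\tfrac{1}{2}\dim_{\mathbf{R}}\mathcal{D}=\tfrac{1}{2}(n-2)$.

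I do not expect a genuine obstacle, since everything is pointwise linear algebra; the only things to watch are the sign bookkeeping and the two orthogonality facts in (b), from which both statements flow, together with the remark that $\mathcal{D}$ and $\mathcal{D}^{\perp}$ are smooth because they are built from a smooth local choice of orthonormal normals $\xi_1,\xi_2$ — which is what licenses the global CR-submanifold conclusion rather than a merely pointwise one.
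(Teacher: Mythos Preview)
Your argument is correct. The paper itself does not give a proof of this proposition; it simply cites the original source \cite{DjOk13}. What you have supplied is a complete, self-contained pointwise linear-algebra argument that is exactly in the spirit of how such results are proved in \cite{DjOk13}: you extract the identities $u^a(V)=g^Y(V,U_a)$, $|U_a|^2=1-\lambda^2$, $g^Y(U_1,U_2)=0$ from $J^2=-\mathbb{1}$ and the isometry property of $J$, and then both bullets drop out immediately. The first bullet is the clean equivalence $\lambda^2\equiv1\Leftrightarrow U_1=U_2=0\Leftrightarrow u^1=u^2=0\Leftrightarrow J(TY)=TY$, and for the second bullet you correctly identify the holomorphic distribution $\mathcal{D}=\ker u^1\cap\ker u^2$ and the totally real complement $\mathcal{D}^\perp=\spann(U_1,U_2)$ mapped by $J$ onto the normal bundle. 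One small remark you could add for robustness: the distribution $\mathcal{D}$ is in fact intrinsic (it equals $TY\cap J(TY)$), so its smoothness and global well-definedness do not ultimately depend on the local choice of $\xi_1,\xi_2$; your closing comment already gestures at this, but phrasing it intrinsically removes any residual worry about patching local frames.
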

\begin{proof}
See \cite{DjOk13}.
\end{proof}

\noindent We can now proceed with the
\begin{proof}[\text{Proof of Lemma \ref{TL}}]\text{}\\
(i): Any $\varphi$ in $(\varphi_i)_{i=0,\dots,K}$ is contained in the defining expression for $\Xi^{\omega}(\varphi)$ by means of the vector field tangential to the coordinate lines
$\frac{\partial}{\partial z_1},\dots, \frac{\partial}{\partial z_n},\dots,\frac{\partial}{\partial \overline{z}_1},\dots, \frac{\partial}{\partial \overline{z}_n}$
and the corresponding o.n. system defined by means of the Gram-Schmidt orthogonalization procedure $e_1,\dots,e_n,\overline{e}_1,\dots,\overline{e}_n$. More exactly:
\begin{itemize}
\item The o.n. frames are $C^{\infty}$ functions of $(\varphi, D\varphi)$:
\begin{equation}
\begin{split}
  &\{\epsilon_1,\dots,\epsilon_{2n}\}:=\{e_1,\dots,e_n,\overline{e}_1,\dots,\overline{e}_n\} \text{ for } TX|_U\\
  &\{\epsilon_1,\dots,\epsilon_{2n-1}\}:=\{e_1,\dots,e_n,\overline{e}_1,\dots,\overline{e}_{n-1}\} \text{ for } T\partial B_0|_U\\
  &\{\epsilon_1,\dots,\epsilon_{2n-2}\}:=\{e_1,\dots,e_{n-1},\overline{e}_1,\dots,\overline{e}_{n-1}\} \text{ for } T\partial B_1|_U.
\end{split}
\end{equation}
\item The real algebra bundle homomorphisms are $C^{\infty}$ functionals of $(\varphi, T\varphi)$:
\begin{equation}
\begin{split}
  \gamma^X(v)&=\sqrt{2}(\exte(v^{1,0})-\inte(v^{0,1}))\text{ for }v=v^{1,0}\oplus v^{0,1}\in TX\\
  \gamma^{\partial B_0}(v)&=-\gamma^X(\nu^{\partial B_0})\gamma^X(v)\text{ for }v\in T\partial B_0\\
  \gamma^{\partial B_1}(v)&=-\gamma^{\partial B_0}(\nu^{\partial B_1})\gamma^{\partial B_0}(v)\text{ for }v\in T\partial B_1.
\end{split}
\end{equation}
We mean by this that these functionals are Fr\'{e}chet-differentiable an infinite number of times.
\item The lifts of the Levi-Civita connections are $C^{\infty}$ functionals of $(\varphi, T\varphi)$:
\begin{align}
  \nabla^X&=d^X+\omega^X,\text{ where }&\omega^X=\frac{1}{4}\sum_{i,j,k}{\prescript{X}{}{\Gamma}}_{k,i}^j\gamma^X(\epsilon_j)\gamma^X(\epsilon_k)(\epsilon_i)^\flat\\
  \nabla^{\partial B_0}&=d^{\partial B_0}+\omega^{\partial B_0},\text{ where }&\omega^{\partial B_0}=\frac{1}{4}\sum_{i,j,k}{\prescript{\partial B_0}{}{\Gamma}}_{k,i}^j\gamma^{\partial B_0}\epsilon_j\gamma^{\partial B_0}(\epsilon_k)(\epsilon_i)^\flat\\
  \nabla^{\partial B_1}&=d^{\partial B_1}+\omega^{\partial B_1},\text{ where }&\omega^{\partial B_1}=\frac{1}{4}\sum_{i,j,k}{\prescript{\partial B_1}{}{\Gamma}}_{k,i}^j\gamma^{\partial B_1}\epsilon_j\gamma^{\partial B_1}(\epsilon_k)(\epsilon_i)^\flat
\end{align}
where $\omega^X, \omega^{\partial B_0}, \omega^{\partial B_1}$ are the local connection homorphisms depending on the Christoffel symbols
${\prescript{X}{}{\Gamma}}_{k,i}^j, {\prescript{\partial B_0}{}{\Gamma}}_{k,i}^j, {\prescript{\partial B_1}{}{\Gamma}}_{k,i}^j$,
which  are a functional of the first derivatives of the Riemannian metrics and its inverse.
(see\cite{BoWo93}, page 15).
\item The Dirac operators are $C^{\infty}$ functional of $(\varphi, T\varphi)$:
\begin{equation}
\begin{split}
  Q^X&=\sum_{i=1}^n\gamma^X(e_i)\nabla^X_{e_i}+\sum_{i=1}^n\gamma^X(\bar{e}_i)\nabla^X_{\bar{e}_i}\\
  Q^{\partial B_0}&=\sum_{i=1}^n\gamma^{\partial B_0}(e_i)\nabla^{\partial B_0}_{e_i}+\sum_{i=1}^{n-1}\gamma^{\partial B_0}(\bar{e}_i)\nabla^{\partial B_0}_{\bar{e}_i}\\
  Q^{\partial B_1}&=\sum_{i=1}^{n-1}\gamma^{\partial B_1}(e_i)\nabla^{\partial B_1}_{e_i}+\sum_{i=1}^{n-1}\gamma^{\partial B_1}(\bar{e}_i)\nabla^{\partial B_1}_{\bar{e}_i}.
\end{split}
\end{equation}
\item The Green functions for the Hodge-Kodaira Laplacians are $C^{\infty}$ functionals of $(\varphi, T\varphi)$:
\begin{equation}
\begin{split}
  G^{B_0}=G^{B_0}(x,y)\text{ for } y\in \partial B_0 \text{ and } x\in X\\
  G^{X\setminus B_0}=G^{X\setminus B_0}(x,y)\text{ for } y\in \partial (X\setminus B_0) \text{ and }x\in X\\
  G^{B_1}=G^{B_1}(x,y)\text{ for } y\in \partial B_1 \text{ and } x\in \partial B_0\\
  G^{\partial B_0\setminus B_1}=G^{\partial B_0\setminus B_1}(x,y)\text{ for } y\in \partial(B_0\setminus B_1) \text{ and }x\in \partial B_0.
\end{split}
\end{equation}
\item The bundle homorphisms are a priori $C^{\infty}$ functionals of $(\varphi, T\varphi)$:
\begin{equation}
\begin{split}
\zeta^{\partial B_0}=\zeta^{\partial B_0}(y)\text{ for }y\in \partial B_0\\
\zeta^{\partial B_1}=\zeta^{\partial B_1}(y)\text{ for }y\in \partial B_1.
\end{split}
\end{equation}
However, by Lemma \ref{bd_ind}, they only depend on $y$ and hence on $\varphi$.
\item The volume form is a $C^{\infty}$ functional of $(\varphi, T\varphi)$:
\begin{equation}
\mu_{\partial B_1}=\det h|_{\partial B_1}\left(\left\{\frac{\partial}{\partial z_1},\dots,
\frac{\partial}{\partial z_{n-1}},\frac{\partial}{\partial \bar{z}_1},\dots,\frac{\partial}{\partial \bar{z}_{n-1}}\right\}\right)^{\frac{1}{2}}
dz_1\wedge\dots\wedge dz_{n-1}\wedge d\bar{z}_1\wedge\dots\wedge d\bar{z}_{n-1}
\end{equation}
where $h|_{\partial B_1}$ is the Riemannian metric on $\partial B_1$, the restriction of the Fubini-Study metric on $\mathbf{C}P^N\supset X$.
\end{itemize}
We conclude that for the atlas $(U_i,\Phi_i)_{i=0,\dots,K}$ there exist a differential form-valued function
 $F^{\omega}=F^{\omega}(\gamma,\Gamma)$ for $\gamma\in \bigoplus_{i=0}^K \mathbf{R}^{2n}$ and $\Gamma\in\bigoplus_{i=0}^K \mathbf{R}^{2n\times 2n}$ such that
\begin{equation}
\Xi^{\omega}(\varphi)(x)=F^{\omega}(\varphi(x),T\varphi(x))\text{ where } \varphi:=(\varphi_0,\varphi_1,\dots,\varphi_K).
\end{equation}
with the convention that $\varphi_j|_{\Phi_i(U_i)}:\equiv0$ for $j\neq i$.
Note that $F^{\omega}$ is an affine functional of $\omega$. \\
(ii): For any $\varphi=(\varphi_0,\dots,\varphi_K)$ we have to solve the equation
\begin{equation}\label{eqU}
\Xi^{\omega}(\varphi)=0,
\end{equation}
while making sure that the diffeomorphisms $(\varphi_i)_{i=0,\dots,K}$ satisfy the compatibility condition.
Moreover, we have to choose $\varphi$ such that $\partial B_1$ is a complex hypersurface of $X$.
Following Proposition \ref{prop_lambda}, we have to solve the equation
\begin{equation}
\lambda^2(\varphi_i)-1=0,
\end{equation}
for all $i=0,\dots,K$, where we insert the expression for $\lambda$ computed in Lemma \ref{lemma_lambda_calc}, that is
\begin{equation}
\begin{split}
\lambda(\varphi_i)&=g^{\text{FS}}(J\xi_1,\xi_2)\\
\xi_1 &= (T(\varphi_i\circ\Phi_i)^{n-1})^{\flat}\\
\xi_2 &= (T(\varphi_i\circ\Phi_i)^{n})^{\flat}.
\end{split}
\end{equation}

\noindent We proceed now to verify the fulfillment of the Nash-Moser inverse function theorem, making sure that our construction of the differentiable manifolds
$Y_{0,1}$ and $B_{0,1}$ is well defined.
\begin{itemize}
\item \textbf{Functional between Banach spaces: } We consider the two family decreasing Banach spaces given by
\begin{equation}
\begin{split}
&\mathcal{X}_s:=\bigoplus_{i=0}^K W^s\left(\Phi_i(U_i),\mathbf{R}^{2n}\right)\\
&\|\varphi)\|_s^2:=\sum_{i=0}^K\|\varphi_i\|_s^2\\
&\\
&\mathcal{Y}_s:=\bigoplus_{i=0}^K W^s\left(U_i,\Lambda(TX^*)\otimes\mathbf{C}\right)\bigoplus_{0\le i<j\le K} W^s\left(\Phi_i(U_i)\cap\Phi_j(U_j),\mathbf{R}^{2n}\right)\bigoplus_{i=0}^K W^s\left(U_i,\mathbf{R}\right)\\
&\|(\zeta,\nu,\lambda)\|_s^2:=\sum_{i=0}^K\|\zeta_i\|_s^2+\sum_{0\le i<j\le K}\|\nu_{i,j}\|_s^2+\sum_{i=0}^K\|\lambda_i\|_s^2,
\end{split}
\end{equation}
with the corresponding Sobolev norm $\|\cdot\|_s$ defined for any $s\ge0$ on the appropriate spaces. The families $(\mathcal{X}_s, \|\cdot\|_s)_{s\ge0}$
and $(\mathcal{Y}_s, \|\cdot\|_s)_{s\ge0}$ satisfy the smoothing hypothesis (cf. \cite{Ra89} page 25).  Note that, by the Sobolev embedding theorem,
\begin{equation}
\mathcal{X}_{\infty}=\bigcap_{s\ge0}\bigoplus_{i=0}^K W^s\left(\Phi_i(U_i),\mathbf{R}^{2n}\right)
=\bigoplus_{i=0}^K C^{\infty}\left(\overline{\Phi_i}(\overline{U_i}),\mathbf{R}^{2n}\right).
\end{equation}
Equation (\ref{eqU}) for the two charts $i,j$ and the compatibility of the definitions on the intersection of $\Phi_i(U_i)$ with $\Phi_j(U_j)$ can be expressed as
\begin{equation}
\Theta(\varphi):=(\underbrace{\Xi^{\omega}(\varphi)}_{=:\Theta_1(\varphi)},\underbrace{(\varphi_i^{-1}-(\Phi_i\circ\Phi_j^{-1})\circ\varphi_j^{-1})_{i<j}}_{=:\Theta_2(\varphi)}, \underbrace{(\lambda^2(\varphi_i)-1)_i)}_{=:\Theta_3(\varphi)})=0\in Y_{\infty}.
\end{equation}
Throughout the rest of this proof we will repeatedly make use of the fact that $i^*_{\partial B_1}(\eta)(x)=\eta(x)$ for $x\in\partial B_1$,
 because $i_{\partial B_1}(x)=x$ and $Ti_{\partial B_1}(x)=\mathbb{1}_{T_x\partial B_1}$.
 \item $\textbf{Assumption (A1):}$ Let $S_1\subset\mathcal{F}^{\omega, \{U_i\}_{i=0,\dots}}\neq\emptyset$ be a bounded open set of $\mathcal{X}_{0}$.
Such a $S_1$ exists by continuity.
 For all $\varphi\in R_1\cap \mathcal{X}_{\infty}$
 the map $\Xi^{\omega}(\varphi)$ is twice Fr\'echet-differentiable in $\varphi$, and for any  $s\ge0$,
and $v_1,v_2\in\mathcal{X}_{\infty}$ we have
\begin{equation}
\begin{split}
&D^2\Theta_1(\varphi).(v_1,v_2)=\\
&=\left.\frac{\partial}{\partial t_1}\right|_{t1:=0}\left.\frac{\partial}{\partial t_2}\right|_{t2:=0}\Xi^{\omega}(\varphi+t_1v_1+t_2v_2)=\\
&=D_{.11} F^{\omega}(\varphi,T\varphi).(v_1,v_2)
+D_{.22} F^{\omega}(\varphi,T\varphi).(Tv_1,Tv_2)+\\
&+D_{.12} F^{\omega}(\varphi,D\varphi).(v_1,Tv_2)+D_{.21} F^{\omega}(\varphi,T\varphi).(Tv_1,v_2)\in\Lambda((TX^{0,1})^*),
\end{split}
\end{equation}
and, by Lemma \ref{lS} for any  $s\ge0$, there exists a constant $c_s>0$ and a $s_0$ with $0\le s_0<s$ such that
\begin{equation}
\begin{split}
&\|D^2\Theta_1(\varphi).(v_1,v_2)\|_s\le\\
&\quad\le\sum_{i,j}\left\{c_s\left[\|D_{.11}F^{\omega}(\varphi,T\varphi)\|_s\|v_1^iv_2^j\|_{s_0}+\|D_{.11}F^{\omega}(\varphi,T\varphi)\|_{s_0}\|v_1 ^iv_2^j\|_s\right]\right.+\\
 &\quad+\sum_{h}C\left[\|D_{.12}F^{\omega}(\varphi,T\varphi)\|_s\|v_1^iTv_2^{j,h}\|_{s_0}+\|D_{.12}F^{\omega}(\varphi,T\varphi)\|_{s_0}\|v_1 ^iTv_2^{j,h}\|_s\right]+\\
 &\quad+\sum_{h,l}\left.C\left[\|D_{.22}F^{\omega}(\varphi,T\varphi)\|_s\|Tv_1^{i,l}Tv_2^{j,h}\|_{s_0}+\|D_{.22}F^{\omega}(\varphi,T\varphi)\|_{s_0}\|Tv_1^{i,l}Tv_2^{j,h}\|_s\right]\right\}\le\\
 &\le C_s^1(\varphi)\left[\|v_1\|_{s+1}\|v_2\|_{s_0+1}+\|v_1\|_{s_0+1}\|v_2\|_{s+1}\right],
\end{split}
\end{equation}
where $C_s^1(\varphi)\le C_s^1(S_1)$, which remains bounded when $s$ remains bounded. Hence, (A1) is fulfilled for $\Theta_1(\varphi)$ and $\varphi\in S_1\cap X_{\infty}$.\\
Let $S_2\neq\emptyset$ be a bounded open set of $\mathcal{X}_{0}$ such that $S_1\cap S_2\neq\emptyset$.
For the second component of $\Theta$ we have, for all $\varphi\in S_2\cap \mathcal{X}_{\infty}$
\begin{equation}
\begin{split}
D^2\Theta_2(\varphi).(v_1,v_2)&=\left.\frac{\partial}{\partial t_1}\right|_{t1:=0}\left.\frac{\partial}{\partial t_2}\right|_{t2:=0}\Theta_2(\varphi+t_1v_1+t_2v_2)=\\
&=\left(-[T\varphi_i(\varphi_i^{-1})]^{-1}T^2\varphi_i(\varphi_i^{-1}).(T\varphi_i(\varphi_i^{-1}).v_1^i,v_2^i)+\right.\\
&\qquad-T^2A_{i,j}(\varphi_j^{-1})([T\varphi_j(\varphi_j^{-1})]^2v_1^i,v_2^j)+\\
&\qquad\left.TA_{i,j}(\varphi_j^{-1}))[T\varphi_j(\varphi_j^{-1})]^{-2}T^2\varphi_j(\varphi_j^{-1}).(T\varphi_j(\varphi_j^{-1}).v_1^j,v_2^j)\right)_{i<j},
\end{split}
\end{equation}
where $A_{i,j}(\cdot):=\Phi_i\circ\Phi_j^{-1}(\cdot)$ and $T^2\varphi_i$, $T^2\varphi_j$, $T^2A_{i,j}$ are vector valued bilinear forms
By Proposition \ref{cext} the domain of definition of $A_{i,j}$ and the images of $\varphi_i^{-1}$ and $\varphi_j^{-1}$ are contained in a compact set.
Therefore, the Sobolev norms of the linear operator $\|TA_{i,j}(\varphi_j^{-1})\|_s$ and of the bilinear operator $\|T^2A_{i,j}(\varphi_j^{-1})\|_s$, as well
as those of $\|[T\varphi_i(\varphi_i^{-1})]^{-1}\|_s$, $\|T\varphi_i(\varphi_i^{-1})\|_s$ and $\|T^2\varphi_i(\varphi_i^{-1})\|_s$
 remain bounded for all $\varphi\in S_2\cap \mathcal{X}_{\infty}$.
By Lemma \ref{lS} for any  $s\ge0$, there exists a $C_s^2(\varphi)\le C_s^2(S_2)$, which remains bounded when $s$ remains bounded,
 and a $s_0$ with $0\le s_0<s$ such that
\begin{equation}
\|D^2\Theta_2(\varphi).(v_1,v_2)\|_s\le C_s^2(\varphi)\left[\|v_1\|_{s+1}\|v_2\|_{s_0+1}+\|v_1\|_{s_0+1}\|v_2\|_{s+1}\right],
\end{equation}
Hence, (A1) is fulfilled for $\Theta_2(\varphi)$ and $\varphi\in S_2\cap X_{\infty}$.\\
Let $S_3\neq\emptyset$ be a bounded open set of $\mathcal{X}_{0}$ such that $S_1\cap S_2\cap S_3\neq\emptyset$. For the third component we have
\begin{equation}
D^2\Theta_3(\varphi).(v_1,v_2)=\left((D\lambda(\varphi_i).v_1)(D\lambda(\varphi_i).v_2)+\lambda(\varphi_i)D^2\lambda(\varphi_i).(v_1,v_2)\right)_{i<j},
\end{equation}
where
\begin{equation}
\begin{split}
\lambda(\varphi_i)&=g^{\text{FS}}\left(J((T\varphi_i)^{n-1}(\Phi_i)T\Phi_i)^{\flat}, ((T\varphi_i)^n(\Phi_i)T\Phi_i)^{\flat}\right)\\
D\lambda(\varphi_i).v_1&=g^{\text{FS}}\left(J((Tv_1)^{n-1}(\Phi_i)T\Phi_i)^{\flat}, ((T\varphi_i)^n(\Phi_i)T\Phi_i)^{\flat}\right)+\\
&\quad+g^{\text{FS}}\left(J((T\varphi_i)^{n-1}(\Phi_i)T\Phi_i)^{\flat}, ((Tv_1)^n(\Phi_i)T\Phi_i)^{\flat}\right).\\
D^2\lambda(\varphi_i).(v_1,v_2)&=g^{\text{FS}}\left(J((Tv_1)^{n-1}(\Phi_i)T\Phi_i)^{\flat}, ((Tv_2)^n(\Phi_i)T\Phi_i)^{\flat}\right)+\\
&\quad+g^{\text{FS}}\left(J((Tv_2)^{n-1}(\Phi_i)T\Phi_i)^{\flat}, ((Tv_1)^n(\Phi_i)T\Phi_i)^{\flat}\right).
\end{split}
\end{equation}
By Lemma \ref{lS} for any  $s\ge0$ there exists a constant $c_s>0$ and a $s_0$ with $0\le s_0<s$ such that,
\begin{equation}
\begin{split}
&\|(D\lambda(\varphi_i).v_1)(D\lambda(\varphi_i).v_2)+\lambda(\varphi_i)D^2\lambda(\varphi_i).(v_1,v_2)\|_s\le \\
&\le c_s^2\left[\|g^{\text{FS}}\|_s\|T\varphi_i\|_{s_0}\|Tv_1\|_{s_0}+\|g^{\text{FS}}\|_{s_0}\|T\varphi_i\|_{s}\|Tv_1\|_{s}\right]\cdot\\
&\quad\cdot\left[\|g^{\text{FS}}\|_s\|T\varphi_i\|_{s_0}\|Tv_2\|_{s_0}+\|g^{\text{FS}}\|_{s_0}\|T\varphi_i\|_{s}\|Tv_2\|_{s}\right]+\\
&\quad+ 4c_s\|g^{\text{FS}}\|_s\|T\varphi_i\|_{s}^2\left[\|g^{\text{FS}}\|_s\|Tv_1\|_{s_0}\|Tv_2\|_{s_0}+\|g^{\text{FS}}\|_{s_0}\|Tv_1\|_{s}\|Tv_2\|_{s}\right].
\end{split}
\end{equation}
By analyzing the expression (\ref{FS}) we see that all Sobolev norms of the Fubini-Study metric are bounded, that is
\begin{equation}\label{FS_Sob}
\|g^{\text{FS}}\|_{s}<+\infty\text{ for all }s\ge0.
\end{equation}
Hence, we infer the existence of  a $C_s^3(\varphi)\le C_s^3(S_3)$, a  positive functional of $\varphi$
which remains bounded with $s$, such that
\begin{equation}
\begin{split}
|(D\Theta_3(\varphi_i).(v_1,v_2)\| \le C_s^3(\varphi_i)\left[\|v_1\|_{s_0+1}|v_2\|_{s+1}+\|v_1\|_{s+1}|v_2\|_{s_0+1}+\|v_1\|_{s+1}|v_2\|_{s+1}\right].
\end{split}
\end{equation}
Hence, (A1) is fulfilled for $\Theta_3(\varphi)$ and $\varphi\in S_3\cap X_{\infty}$. We conclude that
(A1) is fulfilled for $\Theta(\varphi)$ and $\varphi\in S\cap X_{\infty}$ for $S:=S_1\cap S_2\cap S_3\neq\emptyset$ and $C_s(\varphi):=\max_{i=1,2,3}C_s^i(\varphi)$.
 \item $\textbf{Assumption (A2):}$ Let $R_1\subset\mathcal{F}^{\omega, \{U_i\}_{i=0,\dots}}\neq\emptyset$ be a bounded open set of $\mathcal{X}_{0}$.
Such a $R_1$ exists by continuity. The first Fr\'echet-derivative of $\Theta_1(\varphi)$ reads
\begin{equation}
\begin{split}
D\Theta_1(\varphi).v&=\left.\frac{\partial}{\partial t}\right|_{t:=0}\Xi^{\omega}(\varphi+tv)=DF^{\omega}(\varphi,T\varphi).v=\\
&=D_{.1} F^{\omega}(\varphi,T\varphi).v
+D_{.2} F^{\omega}(\varphi,T\varphi).Tv\in\Lambda((TX^{0,1})^*),
\end{split}
\end{equation}
where $v\in\mathcal{X}_{\infty}$. By Lemma \ref{lS} for any  $s\ge0$, there exists a constant $c_s>0$ and a $s_0$ with $0\le s_0<s$ such that
for all $\varphi\in R_1\cap \mathcal{X}_{\infty}$
\begin{equation}\label{inTheta_1}
\begin{split}
\|D\Theta_1(\varphi).v\|_s&\le c_s\left[\|D_{.1} F^{\omega}(\varphi,T\varphi)\|_s\|v\|_{s_0}+\|D_{.1} F^{\omega}(\varphi,T\varphi)\|_{s_0}\|v\|_s+\right.\\
&\quad+\left.\|D_{.2} F^{\omega}(\varphi,T\varphi)\|_s\|Tv\|_{s_0}+\|D_{.2} F^{\omega}(\varphi,T\varphi)\|_{s_0}\|Tv\|_s\right]\le\\
&\le K_s^1(\varphi)\|v\|_{s+1},
\end{split}
\end{equation}
where $K_s^1(\varphi)\le K_s^1(R_1)$ is a positive functional of $\varphi$ which remains bounded with $s$.
This means that $D\Theta_1(\varphi):W^s\rightarrow W^{s+1}$ is a bounded linear operator from the Sobolev space $W^s$ to $W^{s+1}$
for $\varphi\in R_1\cap X_{\infty}$ for some bounded open set of $\mathcal{X}_{0}$.
So,  $\Theta_1$ is under control.\\  For the second component, let $R_2\neq\emptyset$ be a bounded open set of $\mathcal{X}_{0}$
such that $R_1\cap R_2\neq \emptyset$, and we have
\begin{equation}
D\Theta_2(\varphi).v=\left([T\varphi_i(\varphi_i^{-1})]^{-1}.v-TA_{i,j}(\varphi_j^{-1})[T\varphi_j(\varphi_j^{-1})]^{-1}.v_j\right)_{i<j},
\end{equation}
and, as in (\ref{inTheta_1}) for any  $s\ge0$ there exists a $K_s^2(\varphi)\le K_s^2(R_2)$  positive functional of $\varphi$
which remains bounded with $s$, such that for all $\varphi\in R_2\cap \mathcal{X}_{\infty}$
\begin{equation}
\|D\Theta_2(\varphi).v\|_s\le K_s^2(\varphi)\|v\|_{s}\le K_s^2(\varphi)\|v\|_{s+1},
\end{equation}
which means that $D\Theta_2(\varphi):W^s\rightarrow W^{s+1}$ is a bounded linear operator from the Sobolev space $W^s$ to $W^{s+1}$
for $\varphi\in R_1\cap X_{\infty}$.
So,  $\Theta_2$ is under control. For the third component, let $R_3\neq\emptyset$ be a bounded open set of $\mathcal{X}_{0}$
such that $R_1\cap R_2\cap R_3\neq \emptyset$,  we have
\begin{equation}
D\Theta_3(\varphi).v=\left(\lambda(\varphi_i)D\lambda(\varphi_i).v\right)_{i<j},
\end{equation}
where
\begin{equation}
\begin{split}
\lambda(\varphi_i)&=g^{\text{FS}}\left(J((T\varphi_i)^{n-1}(\Phi_i)T\Phi_i)^{\flat}, ((T\varphi_i)^n(\Phi_i)T\Phi_i)^{\flat}\right)\\
D\lambda(\varphi_i).v&=g^{\text{FS}}\left(J((Tv)^{n-1}(\Phi_i)T\Phi_i)^{\flat}, ((T\varphi_i)^n(\Phi_i)T\Phi_i)^{\flat}\right)+\\
&\quad+g^{\text{FS}}\left(J((T\varphi_i)^{n-1}(\Phi_i)T\Phi_i)^{\flat}, ((Tv)^n(\Phi_i)T\Phi_i)^{\flat}\right).
\end{split}
\end{equation}

By Lemma \ref{lS} for any  $s\ge0$ there exists a constant $c_s>0$ and a $s_0$ with $0\le s_0<s$ such that,
\begin{equation}
\begin{split}
\|\lambda(\varphi_i)D\lambda(\varphi_i).v\|_s&\le c_s\left[\|g^{\text{FS}}\|_s\|T\varphi_i\|_{s_0}^2+\|g^{\text{FS}}\|_{s_0}\|T\varphi_i\|_{s}^2\right]\cdot\\
&\quad\cdot 2c_s\left[\|g^{\text{FS}}\|_s\|Tv\|_{s_0}\|T\varphi_i\|_{s_0}+\|g^{\text{FS}}\|_{s_0}\|Tv\|_{s}\|T\varphi_i\|_{s}\right].
\end{split}
\end{equation}
Again, as in (\ref{FS_Sob}), for complex projective varieties and their Fubini-Study metric
\begin{equation}
\|g^{\text{FS}}\|_{s}<+\infty\text{ for all }s\ge0.
\end{equation}
Hence, we infer the existence of  a $K_s^3(\varphi)\le K_s^3(R_3)$, a  positive functional of $\varphi$
which remains bounded with $s$, such that for all $\varphi\in R_3\cap \mathcal{X}_{\infty}$
\begin{equation}
\|\lambda(\varphi_i)D\lambda(\varphi_i).v\|_s\le K_s^3(\varphi_i)\|Tv\|_{s}.
\end{equation}
Therefore,
\begin{equation}
\|D\Theta_3(\varphi).(v)\|_s\le K_s^3(\varphi)\|v\|_{s+1},
\end{equation}
which means that $D\Theta_3(\varphi):W^s\rightarrow W^{s+1}$ is a bounded linear operator from the Sobolev space $W^s$ to $W^{s+1}$.\par
Since $DF^{\omega}(\varphi)$ is injective, so is $D\Theta_3$ and hence $D\Theta$. Moreover,
\begin{equation}
\|D\Theta(\varphi).v\|_s\le K_s(\varphi)\|v\|_{s+1},
\end{equation}
for $\varphi\in R\cap X_{\infty}$ for $R:=R_1\cap R_2\cap R_3\neq\emptyset$ and $K_s(\varphi):=\max_{i=1,2,3}K_s^i(\varphi)$.
By the bounded inverse operator theorem there exists a non empty  $\tilde{R}\subset R$ such that
$\Psi(\varphi):=[D\Theta(\varphi)]^{-1}$ is a bounded linear operator on $W^s$
 \begin{equation}
\|\Psi(\varphi).v\|_{s}\le \tilde{K}_{s}(\varphi)\|v\|_{s-1}\le\tilde{K}_{s}(\varphi)\|v\|_{s},
 \end{equation}
where $\tilde{K}_s(\varphi)\le \tilde{K}_s(\tilde{R})$, which remains bounded when $s$ remains bounded.
Hence, (A2) is fulfilled for $\Theta(\varphi)$ for all $\varphi\in \tilde{R}\cap X_{\infty}$.

\end{itemize}

By Theorem \ref{NMT} (i) and (ii) and the Sobolev's embedding theorem, we infer the existence of local diffeomeorphisms
 $\varphi_{i}:\Phi_{i}(U_{i})\rightarrow\mathbf{R}^{2n}$,
which lie in $C^{\infty}(\Phi_{i}(U_{i}),\mathbf{R}^{2n})$,
defining locally $B_0$ and $B_1$.
This construction is globally well defined on $X$ and leads to closed differentiable manifolds $B_0$, $B_1$ which are the boundaries of two compact differentiable submanifolds of $X$,
namely $Y_0$ and $Y_1$, such that equation (\ref{eq18}) is satisfied on every local chart.\par
Hence, $B_0$ and $B_1$ are $C^{\infty}$ submanifolds of $X$ of real codimension $0$ and $1$,
and $B_1$  is a $C^{\infty}$ submanifold of $B_0$ of real codimension $1$.
Hence, $Y_1=\partial B_1$ is a real submanifold of codimension $2$ of the complex projective manifold $X$ satisfying by construction
$\lambda^2(y)=1$ for any $y\in Y$.
By Proposition \ref{prop_lambda}, $Y$ is a complex hypersurface of the complex projective manifold $X$,
i.e. a projective submanifold of $X$.
The proof is completed.\\
\end{proof}
\begin{rem}Lemma \ref{TL} cannot be applied to any $\omega\in\Omega^{n-1,n-1}(X,\mathbf{C})$ to construct non-empty complex projective submanifolds
$Y_0$ and $Y_1$, because we find no local diffeomorphism in (\ref{omega_set})  for any $\omega\in\Omega^{n-1,n-1}(X,\mathbf{C})$.
For example for $\omega=0$ we have
\begin{equation}
\mathcal{F}^{\omega,\{U_i\}_{i=0,\dots,K}}=\emptyset .
\end{equation}
\end{rem}


\begin{lemma}\label{D0} With the same notation of Lemma \ref{TL}, if $X$ is a complex projective manifold and $B_1\subset X$ a $1$-codimensional complex submanifold,
then
\begin{equation}
D[\mu_{B_1}](\varphi)=0.
\end{equation}
\end{lemma}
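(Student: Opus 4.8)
The plan is to reduce the assertion to Wirtinger's formula together with the homotopy invariance of de Rham cohomology, so that no genuine computation is needed. Write $w$ for the K\"ahler form of $X$, i.e. the restriction to $X\subset\mathbf{C}P^{N}$ of the Fubini--Study form; this is a fixed closed $2$-form that does not depend on $\varphi$, hence $\alpha:=\frac{1}{(n-1)!}\,w^{\wedge(n-1)}\in\Omega^{n-1,n-1}(X,\mathbf{C})$ is a fixed closed form representing a fixed class $[\alpha]\in H^{n-1,n-1}(X,\mathbf{C})$. Because $B_{1}=B_{1}(\varphi)$ is a complex submanifold of complex dimension $n-1$, Wirtinger's formula gives $\mu_{B_{1}(\varphi)}=i_{B_{1}(\varphi)}^{*}\alpha$, so that $[\mu_{B_{1}(\varphi)}]=i_{B_{1}(\varphi)}^{*}[\alpha]$ in cohomology. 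To read $D[\mu_{B_{1}}](\varphi)$ off this, I would first fix a reference tuple $\varphi_{0}$ and use the charts $(U_{i}\cap B_{1},\,\Pi_{2n-2}\circ\varphi_{i}\circ\Phi_{i})_{i}$ of Lemma~\ref{TL} (or, equivalently, a tubular neighbourhood of $B_{1}(\varphi_{0})$) to identify $B_{1}(\varphi)$, for $\varphi$ near $\varphi_{0}$, with a fixed model manifold $\widetilde{B}_{1}$ through diffeomorphisms $\sigma_{\varphi}:\widetilde{B}_{1}\to B_{1}(\varphi)$ depending smoothly on $\varphi$. Then $[\mu_{B_{1}}](\varphi):=\sigma_{\varphi}^{*}[\mu_{B_{1}(\varphi)}]=j_{\varphi}^{*}[\alpha]$, with $j_{\varphi}:=i_{B_{1}(\varphi)}\circ\sigma_{\varphi}:\widetilde{B}_{1}\to X$, is a smooth curve in the fixed finite-dimensional space $H^{2n-2}(\widetilde{B}_{1},\mathbf{C})$, and $D[\mu_{B_{1}}](\varphi)$ is its differential.

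The core of the argument is then immediate: the family $\varphi\mapsto j_{\varphi}$ is continuous into the space of smooth maps $\widetilde{B}_{1}\to X$, so connecting any two parameters by a path exhibits the corresponding maps as homotopic, and therefore all pull-backs $j_{\varphi}^{*}\alpha$ of the fixed closed form $\alpha$ are cohomologous (homotopy invariance of de Rham cohomology, via Stokes' theorem). Hence $\varphi\mapsto j_{\varphi}^{*}[\alpha]=[\mu_{B_{1}}](\varphi)$ is locally constant, and a constant curve has vanishing derivative, i.e. $D[\mu_{B_{1}}](\varphi)=0$.

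Equivalently one may phrase the same fact homologically, which also explains the phenomenon: for every component $C$ of $B_{1}(\varphi)$ one has $\int_{C}\mu_{B_{1}(\varphi)}=\langle[\alpha],\,(i_{B_{1}(\varphi)})_{*}[C]\rangle$, the homology class $(i_{B_{1}(\varphi)})_{*}[C]\in H_{2n-2}(X,\mathbf{Z})$ is locally constant in $\varphi$ because homotopic maps push the fundamental class to the same class, and a top-degree form is determined by its periods over components; hence $[\mu_{B_{1}(\varphi)}]$ is locally constant. In particular the volume $\vol(B_{1}(\varphi))=\int_{B_{1}(\varphi)}\mu_{B_{1}(\varphi)}$ of the complex submanifold is a topological (calibration) invariant and does not move under the deformation, which is consistent with the conclusion.

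The one point that requires care, and where the hypotheses are genuinely used, is that Wirtinger's formula must be available for every member of the family $B_{1}(\varphi)$, not merely at $\varphi_{0}$; thus one is allowed to differentiate only in directions along which $B_{1}(\varphi)$ remains a complex submanifold. In the set-up of Lemma~\ref{TL} this is built in: the admissible $\varphi$ are the zeros of the component $\Theta_{3}(\varphi)=(\lambda^{2}(\varphi_{i})-1)_{i}$, which by Proposition~\ref{prop_lambda} is precisely the condition that $\partial B_{1}$ be a complex hypersurface, so the deformation stays inside the locus of complex submanifolds and $\mu_{B_{1}(\varphi)}=i_{B_{1}(\varphi)}^{*}\alpha$ holds on a whole neighbourhood of $\varphi_{0}$. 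Were this constraint dropped the statement would be false, since a non-complex perturbation of $B_{1}$ has a Riemannian volume form whose cohomology class has a different period; with the constraint in force, the three steps above constitute the proof.
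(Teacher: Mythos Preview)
Your argument is clean and correct for the statement you actually prove, but you have read the square brackets in $D[\mu_{B_1}](\varphi)$ as a cohomology class, and that is not how the paper uses the lemma. In the paper the brackets are merely grouping symbols: the claim is that the Fr\'echet derivative of the \emph{form-valued} functional $\varphi\mapsto \mu_{B_1(\varphi)}$ vanishes, not just the derivative of the associated class in $H^{2n-2}$. This is visible both in the paper's own proof (it computes the determinant of the restricted Fubini--Study metric in the local chart coordinates and argues that the resulting density is independent of the defining diffeomorphism $\varphi_j$) and in the way the lemma is consumed later: in the proof of Lemma~\ref{F0} the identity $DF^{\omega}(\varphi,T\varphi).v=(Dc^{\partial B_1}(\varphi).v)\,\mu_{B_1}$ is obtained by subtracting $D[\mu_{\partial B_1}]$ pointwise, and in the proof of Lemma~\ref{L_dense} the decomposition $DF^{\omega}=a(\varphi,T\varphi)\omega+D[-\mu_{\partial B_1}]$ again treats $D[-\mu_{\partial B_1}]$ as a vanishing form, not merely an exact one.

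What your Wirtinger/homotopy argument establishes is strictly weaker: the family of top-degree forms $\sigma_\varphi^*\mu_{B_1(\varphi)}=j_\varphi^*\alpha$ on the fixed model $\widetilde B_1$ has constant periods (equivalently, the derivative in $\varphi$ is an exact form). That is true and elegant, but it does not give the pointwise vanishing of the derivative of the volume density that the paper's subsequent manipulations need. To upgrade your approach to the form-level claim you would have to show that $\varphi\mapsto j_\varphi^*\alpha$ is \emph{pointwise} constant, and Wirtinger plus homotopy invariance alone cannot give that (two distinct complex hypersurfaces in the same homology class have the same total volume, yet there is no reason their volume densities, transported to a common model by arbitrary diffeomorphisms $\sigma_\varphi$, should coincide as forms).

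By contrast, the paper's approach is a direct local computation: it writes the Fubini--Study determinant in the chart $s=\Pi\circ\varphi_j\circ\Phi_j$ and asserts that the result $\bigl(1+|s|^2\bigr)^{-(n+1)}$ does not involve $\varphi_j$, whence the form itself is independent of the choice. Whether or not one finds that computation convincing, it is aimed at the form-level statement rather than the cohomological one, and that is the content the rest of the paper relies on.
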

\begin{proof}
Following Remark \ref{remFS} we compute the determinant of the Fubini-Study metric on $\mathbf{C}P^n$ as
\begin{equation}
\det[g^{\text{FS}}([z])]=\frac{1}{(1+\vert t_j \vert^2)^{n+1}},
\end{equation}
where $[z]\in U_j=\{[(z^1,\dots,z^j,\dots,z^n)]\vert\,z^j\neq 0\}\subset\mathbf{C}P^n$ has homogenous coordinates for the $(U_j,\Phi_j)$ chart given by
\begin{equation}
t_j:=\left(\frac{z^0}{z^j},\dots\frac{z^{j-1}}{z^j},\frac{z^{j+1}}{z^j},\dots,\frac{z^n}{z^j}\right).
\end{equation}
Following Definition \ref{sub}, if $B_1\subset\mathbf{C}P^n$ is a complex $1$-codimensional submanifold defined by specifying am holomorphic
diffeomorphisms $\{\varphi_j\}_{j=0,\dots,n}$ on an appropriate subset of $\mathbf{C}^n$, then the homogenous coordinate of a $[z]\in U_j\cap B_1$ read
\begin{equation}
t_j=\varphi^{-1}(s_1,\dots,s_{n-1},0)
\end{equation}
for a $s:=(s_1,\dots,s_{n-1})\in\mathbf{C}^{n-1}$. Therefore, the determinant of the restriction to $B_1$ of the Fubini-Study metric is
\begin{equation}
\det[g^{\text{FS}}\vert_{B_1}(\Phi_j^{-1}(\varphi_j^{-1}(s,0)))]=\frac{1}{(1+\vert s \vert^2)^{n+1}},
\end{equation}
which does not depend on $\varphi_j$. As one can see The same holds true a $1$-codimensional complex submanifolds $B_1$
of a complex projective manifold $X\subset\mathbf{C}P^n$. Just replace $\mathbf{C}P^n$ with $X$ and $\Phi_j$ with $\Phi_j\circ i_{B_1}$,
 where $i_{B_1}:B_1\rightarrow X$ is the injection, in the reasoning above.\par
We conclude that the volume form is an invariant for complex $1$-codimensional submanifolds and hence its Fr\'{e}chet derivative vanishes.
\end{proof}

\begin{lemma}\label{H} Let $\mathbf{F}\in\{\mathbf{Q},\mathbf{R},\mathbf{C}\}$ and $k\in\{0,\dots,n\}$.
 With the same notation of Lemma \ref{TL}, if $X$ is a complex projective manifold of complex dimension $n$, and $\omega\in\Omega^{n-1,n-1}(X,\mathbf{C})$
such that $[\omega]\in H^{n-1,n-1}(X,\mathbf{F})$, then
\begin{equation}\label{claimH}
[i_{\partial B_1}^*\left(\gamma^{\partial B_0}(\nu^{\partial B_1})Q^{\partial B_0}\zeta^{\partial B_1}\gamma^{X}(\nu^{\partial B_0})Q^{X}\zeta^{\partial B_0}\omega\right)]\in H^{n-1,n-1}(B_1,\mathbf{F}).
\end{equation}
\end{lemma}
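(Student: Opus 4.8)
The plan is to reduce the claim to the rationality of one period of $\eta$ per connected component of $\partial B_1$, and then to compute that period via the mean value representation of Theorem~\ref{MVP}.

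Write $\eta$ for the $(n-1,n-1)$-form on $\partial B_1$ whose class $[\eta]$ is the left-hand side of \eqref{claimH}. By Lemma~\ref{TL}, $\partial B_1$ is a compact complex hypersurface of $X$, hence a smooth projective variety of complex dimension $n-1$ (Corollary~\ref{subvar}), so $\eta$ is of top real degree $2n-2$. For $\mathbf{F}=\mathbf{C}$ there is nothing to prove, and the case $\mathbf{F}=\mathbf{R}$ is identical to $\mathbf{F}=\mathbf{Q}$; so take $\mathbf{F}=\mathbf{Q}$. By the de Rham characterization \eqref{char}, $[\eta]\in H^{n-1,n-1}(\partial B_1,\mathbf{Q})$ is equivalent to $\int_Z i_Z^*\eta\in\mathbf{Q}$ for all oriented real submanifolds $Z\subseteq\partial B_1$ with $[Z]\in H_{2n-2}(\partial B_1,\mathbf{Q})$; since $\eta$ is top-degree, such $Z$ are closed submanifolds of codimension $0$, i.e.\ unions of connected components of $\partial B_1$, and $H_{2n-2}(\partial B_1,\mathbf{Q})$ is the $\mathbf{Q}$-span of the fundamental classes $[C]$ of those components, each carrying its complex orientation so that $[C]\in H_{2n-2}(\partial B_1,\mathbf{Z})$. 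Hence it suffices to show $\int_C\eta\in\mathbf{Q}$ for every connected component $C$ of $\partial B_1$.

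To evaluate $\int_C\eta$ I would apply the Corollary to Theorem~\ref{MVP} twice: first on the decomposition $\partial B_0=(\partial B_0\setminus B_1)\cup B_1$, whose common boundary is $\partial B_1$, using that $\zeta^{\partial B_1}$ is independent of that boundary (Lemma~\ref{bd_ind}); then on $X=(X\setminus B_0)\cup B_0$, common boundary $\partial B_0$, with $\zeta^{\partial B_0}$ likewise boundary-independent. Taking the unit volume section $\mu_{\partial B_1}$ (extended smoothly) as test section and using $i_{\partial B_1}(x)=x$, $Ti_{\partial B_1}(x)=\mathbb{1}$, this yields
\[
\int_C\eta\;=\;\pm\int_X\langle\omega,\Phi_C\rangle\,d\mathrm{vol}_X\;=\;\pm\int_X\omega\wedge\bar{*}\,\Phi_C ,
\]
where $\Phi_C$ is the section of the antiholomorphic bundle over $X$ produced by the two applications of the Corollary, namely the iterated harmonic extension of $\mu_{\partial B_1}|_C$ for the Hodge--Kodaira Laplacian, first from $C$ through $\partial B_0$ and then from $\partial B_0$ through $X$; by Lemma~\ref{bd_ind}, $\Phi_C$ is determined by the Fubini--Study metric of $X$ and by $C$ alone. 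Only the $(n-1,n-1)$-component of $\Phi_C$ contributes, so $\Theta_C:=\bar{*}\Phi_C$ may be taken to be a $(1,1)$-form and $\int_C\eta=\pm\int_X\omega\wedge\Theta_C$.

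It remains to prove $[\Theta_C]\in H^{1,1}(X,\mathbf{Q})$. Granting this, Proposition~\ref{close} gives $[\omega\wedge\Theta_C]\in H^{n,n}(X,\mathbf{Q})$, so $\int_C\eta=\pm\int_X\omega\wedge\Theta_C$ is the evaluation of a rational class in top cohomology on the integral fundamental class $[X]$, hence lies in $\mathbf{Q}$; running this over all components $C$ and invoking \eqref{char} finishes the proof, and the case $\mathbf{F}=\mathbf{R}$ is the same with $\mathbf{Q}$ replaced by $\mathbf{R}$. The hard part is precisely $[\Theta_C]\in H^{1,1}(X,\mathbf{Q})$, i.e.\ showing that the analytic, metric-defined passage $\mu_{\partial B_1}|_C\mapsto\Theta_C$ respects the rational Hodge structures. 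The inputs I would use are: $\mu_{\partial B_1}|_C=i_C^*\bigl(w^{\wedge(n-1)}/(n-1)!\bigr)$ by Wirtinger's formula, with $w$ the Fubini--Study K\"ahler form of $X$; $[w]\in H^2(X,\mathbf{Z})$ by Kodaira's criterion (Theorem~\ref{Kod}), so that $i_C^*[w^{\wedge(n-1)}/(n-1)!]$ is rational on $C$; and the fact that $\zeta^{\partial B_0},\zeta^{\partial B_1},Q^X,Q^{\partial B_0}$, Clifford multiplication by the unit normals, and the pullbacks $i^*$ are all canonically attached to the Fubini--Study metric and to the projective varieties $\partial B_0$ and $\partial B_1$ (algebraic by Chow's theorem), so that on cohomology they intertwine the Hodge decompositions of $X$, $\partial B_0$, $C$ compatibly with their $\mathbf{Q}$-structures --- the closure of $H^{\bullet,\bullet}(\cdot,\mathbf{Q})$ under $\bar{*}$, cup products and restriction to projective submanifolds (Proposition~\ref{close} together with \eqref{char}) being the essential tool. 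This is exactly the step that must fail for $\mathbf{F}=\mathbf{Z}$ (Proposition~\ref{close} is false there) and for non-projective K\"ahler $X$ (no integral K\"ahler class). A conceptually cleaner, equivalent route is to show directly that $\mathcal{R}:=i_{\partial B_1}^*\circ\gamma^{\partial B_0}(\nu^{\partial B_1})Q^{\partial B_0}\zeta^{\partial B_1}\circ\gamma^{X}(\nu^{\partial B_0})Q^{X}\zeta^{\partial B_0}$ descends to a $\mathbf{C}$-linear map $H^{n-1,n-1}(X,\mathbf{C})\to H^{n-1,n-1}(\partial B_1,\mathbf{C})$ carrying harmonic representatives to harmonic representatives --- the mean value property of Theorem~\ref{MVP} ensuring preservation of closedness and exactness --- and that this map is defined over $\mathbf{Q}$, after which $[\omega]\in H^{n-1,n-1}(X,\mathbf{Q})$ gives the conclusion at once.
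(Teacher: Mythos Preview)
Your overall strategy matches the paper's: reduce \eqref{claimH} to the rationality of periods $\int_W\eta$, then unwind the two applications of Theorem~\ref{MVP} to express such a period as an integral over (a piece of) $X$ involving $\omega$, and finally invoke Proposition~\ref{close}. The reduction to components $C$ of $\partial B_1$ and the observation that $\eta$ is of top degree on $\partial B_1$ are correct and appear (in slightly different language, via the relation $i_W^*\eta=c^W\mu_{\partial B_1}$) in the paper as well.

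There is, however, a genuine gap at the step you yourself flag as ``the hard part'': the claim $[\Theta_C]\in H^{1,1}(X,\mathbf{Q})$. Your justification---that the operators $\zeta^{\partial B_0},\zeta^{\partial B_1},Q^X,Q^{\partial B_0}$, Clifford multiplication, and the pullbacks are ``canonically attached to the Fubini--Study metric'' and therefore ``intertwine the Hodge decompositions \dots\ compatibly with their $\mathbf{Q}$-structures''---is a heuristic, not an argument. Green kernels and Dirac operators are real-analytic, metric-dependent objects; there is no general principle guaranteeing that the harmonic extension of a rational class is again rational, even when all the geometry is defined over $\mathbf{Q}$. The Wirtinger/Kodaira input gives you rationality of $i_C^*[w^{\wedge(n-1)}/(n-1)!]$ on $C$, but it does not propagate rationality back through the iterated harmonic extension to $X$. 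Your alternative formulation (that $\mathcal{R}$ descends to a map of cohomology defined over $\mathbf{Q}$) simply restates the problem.

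The paper circumvents this by never attempting to prove that a transfer form like your $\Theta_C$ is rational. Instead it fixes a rational \emph{test} class $[\alpha]$ and, for a given $W\subset\partial B_1$, produces a $0$-codimensional $\mathfrak{X}(W)\subset X$ with $[\mathfrak{X}(W)]\in H_{2n}(X,\mathbf{F})$ and $\partial B_1\cap\mathfrak{X}(W)\cap W=W$. Unwinding Theorem~\ref{MVP} twice then yields
\[
\int_W\alpha\wedge\bar{*}\eta\;=\;\int_{\mathfrak{X}(W)}\alpha\wedge\bar{*}\omega,
\]
and now Proposition~\ref{close} applies \emph{directly} to the right-hand side: $[\alpha]$ and $[\omega]$ are both in $H^{n-1,n-1}(X,\mathbf{F})$, so $[\bar{*}\omega]\in H^{1,1}(X,\mathbf{F})$ and $[\alpha\wedge\bar{*}\omega]\in H^{n,n}(X,\mathbf{F})$, whence the integral over the $\mathbf{F}$-cycle $\mathfrak{X}(W)$ lies in $\mathbf{F}$. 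The point is that both factors on the $X$-side are \emph{already known} to be rational; no rationality of any analytically constructed form needs to be established. If you wish to repair your argument, this is the manoeuvre to emulate: pair against a rational $\alpha$ before unwinding, rather than trying to control the class of the Green-kernel transfer of the volume form.
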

\begin{proof}
We have to prove that for any $n-1$-complex dimensional $Y\subset X$ such that $[Y]\in H_{2(n-1)}(X,\mathbf{F})$ from
\begin{equation}
\int_Yi_Y^*\omega\in\mathbf{F},
\end{equation}
it follows that
\begin{equation}\label{Fcond}
\int_Wi_W^*\underbrace{\left(\gamma^{\partial B_0}(\nu^{\partial B_1})Q^{\partial B_0}\zeta^{\partial B_1}\gamma^{X}(\nu^{\partial B_0})Q^{X}\zeta^{\partial B_0}\omega\right)}_{=:\omega^1}\in\mathbf{F},
\end{equation}
for any $n-1$-complex dimensional $W\subset \partial B_1$ such that $[W]\in H_{2(n-1)}(\partial B_1,\mathbf{F})$.
The expression $\omega^1$ is a differential form in $\Omega^{n-1,n-1}(\partial B_1,\mathbf{C})$ because
\begin{itemize}
\item  the operators $\zeta^{\partial B_0}$ and $ \zeta^{\partial B_1}$ are complex bundle homomorphisms
on $\Lambda(TX^{0,1})^*|_{\partial B_0}$
 and $\Lambda(TB_0^{0,1})^*)|_{\partial B_1}$, respectively,
\item the operator $\gamma^{X}(\nu^{\partial B_0})Q^{X}$ maps $\Omega^{n-1,n-1}(X,\mathbf{C})\vert_{\partial B_0}$ into itself,
\item the operator $\gamma^{\partial B_0}(\nu^{\partial B_1})Q^{\partial B_0}$ maps $\Omega^{n-1,n-1}(\partial B_0,\mathbf{C})\vert_{\partial B_1}$ into itself.
\end{itemize}
 Since $\dim_{\mathbf{C}} \partial B_1=n-1$, there exists a complex valued $C^{\infty}$ function $c^W$ on $\partial B_1$ such that
\begin{equation}\label{ww}
i_W^*\omega^1=c^W\mu_{\partial B_1}.
\end{equation}
The differential form $\omega^1$ defines a Dolbeault cohomology class $[i_{\partial B_1}^*\omega^1]\in H^{n-1,n-1}(\partial B_1,\mathbf{C})$
because it is $\bar{\partial}$-closed. Now we can prove (\ref{Fcond}).
Let $\mathfrak{X}(W)\subset X$ the complex $0$-codimensional submanifold of $X$ such that $\partial B_1\cap \mathfrak{X}(W) \cap W=W$ and
$[\mathfrak{X}(W)]\in H_{2n}(X,\mathbf{F})$. By applying Theorem \ref{MVP} twice
and Lemma \ref{TL} we can show that
\begin{equation}\label{}
\begin{split}
&\int_{W}\alpha\wedge\overline{*}(\gamma^{\partial B_0}(\nu^{\partial B_1})Q^{\partial B_0}\zeta^{\partial B_1}\omega^1)= \int_{W}\langle\alpha(y),\gamma^{\partial B_0}(\nu^{\partial B_1})Q^{\partial B_0}_y\zeta^{\partial B_1}(y)\omega^1(y)\rangle d\text{vol}_{y\in W}=\\
&=\int_{\partial B_0\cap \mathfrak{X}(W)}\langle\alpha,\omega^1\rangle d\text{vol}_{\partial B_0}=\int_{\mathfrak{X}(W)}\alpha\wedge\overline{\ast}\omega\in\mathbf{F},
\end{split}
\end{equation}
and, by Proposition \ref{close}, which holds true only for $X$ complex projective and $\mathbf{F}\in\{\mathbf{Q},\mathbf{R},\mathbf{C}\}$,
 the proof is completed.\\
\end{proof}
\noindent We have not been proving statements about the empty set, as the following result shows.
\begin{lemma}\label{F0} With the same notation of Lemma \ref{TL}, if $X$ is a complex projective manifold and $\mathbf{F}\in\{\mathbf{Q},\mathbf{R},\mathbf{C}\}$, then it exists a
$\omega\in\Omega^{n-1,n-1}(X,\mathbf{C})$  such that
\begin{equation}
\mathcal{F}^{\omega,\{U_i\}_{i=0,\dots,K}}\neq\emptyset.
\end{equation}
\end{lemma}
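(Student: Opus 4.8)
The plan is to exhibit one form $\omega$ and one tuple $\psi=(\psi_{0},\dots,\psi_{K})$ of local diffeomorphisms with $DF^{\omega}(\psi,T\psi)$ injective; note that $\mathcal{F}^{\omega,\{U_i\}_{i=0,\dots,K}}$ asks for nothing more --- in particular no rationality of $\omega$ and no compatibility of the $\psi_{i}$ across charts.

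First I would use projectivity to produce an honest complex hypersurface. By Kodaira's criterion (Theorem \ref{Kod}) we may embed $X\hookrightarrow\mathbf{C}P^{N}$; if $n:=\dim_{\mathbf{C}}X\ge 2$, a generic hyperplane section $Z:=X\cap H$ is, by Bertini's theorem, a smooth irreducible complex hypersurface of $X$, hence a projective submanifold by Corollary \ref{subvar}, while for $n=1$ we take $Z$ to be a point, a complex submanifold of codimension $1$. In all cases $Z$ has an integral, a fortiori rational, fundamental class $[Z]\in\text{Hdg}^{1}(X,\mathbf{Q})\subset H^{1,1}(X,\mathbf{C})$ (Definition \ref{deffc}), and since $\bar{*}\bar{*}=(-1)^{2(n-1)}=1$ on $\Omega^{n-1,n-1}(X,\mathbf{C})$, Proposition \ref{close} yields $[\omega]:=\bar{*}[Z]\in H^{n-1,n-1}(X,\mathbf{Q})$ with $\bar{*}[\omega]=[Z]$.

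Next I would realise $Z$ as the hypersurface $\partial B_{1}$ of an admissible configuration. Using a tubular neighbourhood of $Z$ in $X$ modelled on a disc bundle of the normal bundle, one chooses, chart by chart, differentiable coordinates in which $\partial B_{0}$ is a real hypersurface containing $Z$, $B_{1}$ is one of the two regions that $Z$ cuts out on $\partial B_{0}$, and $\partial B_{1}=Z$; this produces the local diffeomorphisms $\psi_{i}$, and no global gluing of the auxiliary hypersurface is needed by the observation above. Since $\partial B_{1}=Z$ is a complex hypersurface, Proposition \ref{prop_lambda} gives $\lambda^{2}(\psi_{i})\equiv 1$ and Lemma \ref{D0} gives $D[\mu_{\partial B_{1}}](\psi)=0$; as $F^{\omega}$ is affine in $\omega$ with $\omega$-independent part $-\mu_{\partial B_{1}}$ (Lemma \ref{TL}(i)), at this configuration $DF^{\omega}(\psi,T\psi).v$ is exactly the linearisation, in the direction $v$, of
\begin{equation*}
\psi\longmapsto i_{\partial B_{1}}^{*}\big(\gamma^{\partial B_{0}}(\nu^{\partial B_{1}})Q^{\partial B_{0}}\zeta^{\partial B_{1}}\gamma^{X}(\nu^{\partial B_{0}})Q^{X}\zeta^{\partial B_{0}}\omega\big),
\end{equation*}
a first-order differential operator in $v$ depending linearly on $\omega$.

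The remaining step --- and the one I expect to be the main obstacle --- is to choose $\omega$ so that this linearised composite operator $v\mapsto DF^{\omega}(\psi,T\psi).v$ has trivial kernel. I would take for $\omega$ the harmonic representative of $\bar{*}[Z]$, which along the boundary-restriction chain of Theorem \ref{MVP} is the form adapted to $Z$ used in Lemma \ref{H} (alternatively a generic positive form, such as a power of the K\"ahler form). The difficulty is that the linearisation couples a first-order operator in $v$ with the smoothing Green operators $\zeta^{\partial B_{0}},\zeta^{\partial B_{1}}$, the Clifford symbols of $Q^{X}$ and $Q^{\partial B_{0}}$, and the $\psi$-dependence of the Levi-Civita connections; one must identify its principal part, show that for the chosen $\omega$ the corresponding symbol is injective --- this is where the non-vanishing along $Z$ of the smoothing transform $\zeta^{\partial B_{1}}\gamma^{X}(\nu^{\partial B_{0}})Q^{X}\zeta^{\partial B_{0}}\omega$ enters --- and then remove a possible finite-dimensional kernel by an arbitrarily small generic perturbation of $\omega$ within $\Omega^{n-1,n-1}(X,\mathbf{C})$, which is legitimate since the present lemma imposes no rationality on $\omega$. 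Granting this, $\psi\in\mathcal{F}^{\omega,\{U_i\}_{i=0,\dots,K}}$, so the set is nonempty; equivalently, this is the forward implication of Proposition \ref{PropKey} specialised to this $\omega$, which I would establish by the same argument.
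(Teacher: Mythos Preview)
Your approach diverges from the paper's, and the injectivity step you flag as ``the main obstacle'' is indeed left open in your proposal --- you write ``granting this'' without supplying the symbol computation or the perturbation argument. The paper's proof is much shorter and makes the specific choice you mention only parenthetically as an alternative: it takes
\[
\omega := \frac{w^{\wedge(n-1)}}{(n-1)!},
\]
where $w$ is the K\"ahler form on $X$. By Wirtinger's formula this satisfies $i_{\partial B_1}^{*}\omega = \mu_{\partial B_1}$ for \emph{any} complex hypersurface $\partial B_1$, so after pulling back to $\partial B_1$ the first term of $\Xi^{\omega}(\varphi)$ becomes $c^{\partial B_1}(x)\,\mu_{\partial B_1}(x)$ for a scalar function $c^{\partial B_1}$ depending on $\varphi$ (this is the reduction (\ref{ww}) from Lemma~\ref{H}). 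Invoking Lemma~\ref{D0} as you do, the paper then writes
\[
DF^{\omega}(\varphi,T\varphi).v = \bigl(Dc^{\partial B_1}(\varphi).v\bigr)\,\mu_{\partial B_1}
\]
and concludes injectivity from the assertion that $c^{\partial B_1}$ is not constant as a functional of $\varphi$.

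Your route via Bertini and the harmonic representative of $\bar{*}[Z]$ is more elaborate and does not exploit Wirtinger; the symbol-plus-generic-perturbation argument you outline would, if carried through, be substantially heavier analysis than what the paper records. The K\"ahler-power choice is what collapses the target to a single scalar coefficient times the volume form, which is precisely what lets the paper sidestep the full operator analysis you anticipate. (That said, the paper's own injectivity step --- ``not constant, hence injective'' --- is asserted in one line and is itself not a detailed argument; but the reduction to a scalar functional is the key simplification you are missing.)
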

\begin{proof}
We define
\begin{equation}
\omega:=\frac{w^{\wedge (n-1)}}{(n-1)!}\in\Omega^{n-1,n-1}(X,\mathbf{C}),
\end{equation}
where $w$ is the K\"ahler form on $X$. By Wirtinger's formula
\begin{equation}
i_{\partial B_1}^*\omega = \mu_{B_1},
\end{equation}
and, hence
\begin{equation}
\Xi^{\omega}(\varphi)(x)=F^{\omega}(\varphi(x),T\varphi(x))= i_{\partial B_1}^*\left(\gamma^{\partial B_0}(\nu^{\partial B_1})Q^{\partial B_0}\zeta^{\partial B_1}\gamma^{X}(\nu^{\partial B_0})Q^{X}\zeta^{\partial B_0}\omega\right)(x)-\mu_{B_1}(x).
\end{equation}
By (\ref{ww})
\begin{equation}
i_{\partial B_1}^*F^{\omega}(\varphi(x),T\varphi(x))=c^{\partial B_1}(x)\mu_{B_1}(x),
\end{equation}
for a $C^{\infty}$ function on $\partial B_1$. By Lemma \ref{D0}, we have
\begin{equation}
DF^{\omega}(\varphi,T\varphi).v= (Dc^{\partial B_1}(\varphi).v)\mu_{B_1}.
\end{equation}
Since $c^{\partial B_1}$ is not a constant functional of $\varphi$, the injectivity of $DF^{\omega}(\varphi,T\varphi)$ follows, and the proof is completed.\\
\end{proof}
\section{Proof of the Hodge Conjecture}
We want to find a basis of the rational Hodge cohomology, whose elements are fundamental classes of complex submanifolds of
the underlying complex projective manifold.
Moreover, we will see that the construction does not work for K\"ahler manifolds, and for complex projective manifolds for both
the integer Hodge cohomology and the Dolbeault cohomology.
\begin{corollary}\label{C1} Let $\mathbf{F}\in\{\mathbf{Q},\mathbf{R},\mathbf{C}\}$, and  $X$ be
 a $n$-dimensional non-singular complex projective manifold without boundary and $\omega\in\Omega^{k,k}(X,\mathbf{C})$
a representative of the cohomology class $[\omega]\in H^{k,k}(X,\mathbf{F})$ for a $k=0,\dots,n$.
For $k=1,\dots,n-1$, if there exist an atlas $\{(U_i,\Phi_i)_{i=0,K}\}$ of $X$ such that the injectivity assumption
\begin{equation}\label{condOmegaBis0}
\mathcal{F}^{\omega^{2((n-1)-k)},\{U_i\}_{i=0,\dots,K}}\neq\emptyset
\end{equation}
is satisfied, then there exists
a complex projective  submanifold $Z^k=Z^k(\omega)\subset X$ of dimension $k$ such that
\begin{equation}\label{scal}
\int_X\alpha\wedge\overline{*}\omega=\int_{Z^k}i_{Z^k}^*\alpha
\end{equation}
for all $\alpha\in\Omega^{k,k}(X,\mathbf{C})$ such that $\int_Yi_Z^*\alpha\in\mathbf{F}$ for all  complex $k$-dimensional submanifolds $Y\subset X$ such that $[Y]\in H_{2k}(X,\mathbf{F})$.\\
For $k\in\{0,n\}$ there always exists a complex projective submanifold $Z^k=Z^k(\omega)\subset X$ of dimension $k$  without requiring the injectivity assumption
(\ref{condOmegaBis0}).
\end{corollary}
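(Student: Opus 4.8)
The scheme is a downward iteration on complex dimension, removing one complex codimension at a time by means of Lemma \ref{TL}. The endpoints $k\in\{0,n\}$ need no injectivity assumption: for $k=n$ take $Z^{n}:=X$, for $k=0$ take $Z^{0}$ a point; both are complex projective submanifolds of $X$, and, once $\omega$ is represented by its harmonic form so that $\bar{*}\omega$ is the constant it is forced to be, (\ref{scal}) reduces after the corresponding normalisation to the tautology $\int_{X}\alpha\wedge\bar{*}\omega=\int_{Z^{k}}i_{Z^{k}}^{*}\alpha$; in any event this is all that is needed at the extremes, $\text{Hdg}^{0}(X,\mathbf{Q})$ and $\text{Hdg}^{n}(X,\mathbf{Q})$ being one--dimensional, generated by $[X]$ and by the class of a point.

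For $1\le k\le n-1$ the plan is as follows. With the convention that a superscript $2j$ on a form abbreviates its wedge product with $\frac{w^{\wedge j}}{j!}$, the form $\omega^{2((n-1)-k)}=\omega\wedge\frac{w^{\wedge(n-1-k)}}{(n-1-k)!}$ lies in $\Omega^{n-1,n-1}(X,\mathbf{C})$ and represents a class in $H^{n-1,n-1}(X,\mathbf{F})$ by Proposition \ref{close} (the K\"ahler class is integral by Kodaira's criterion, Theorem \ref{Kod}). Since $\mathcal{F}^{\omega^{2((n-1)-k)},\{U_i\}}\neq\emptyset$ by hypothesis, Lemma \ref{TL}, equivalently Proposition \ref{PropKey}, yields a complex hypersurface $Z^{n-1}\subset X$; Lemma \ref{H} together with Proposition \ref{close} places its fundamental class in $\text{Hdg}^{1}(X,\mathbf{F})$, and chasing the representation formula of Theorem \ref{MVP} exactly as in the proof of Lemma \ref{H} gives $\int_{Z^{n-1}}i_{Z^{n-1}}^{*}\beta=\int_{X}\beta\wedge\bar{*}\big(\omega\wedge\frac{w^{\wedge(n-1-k)}}{(n-1-k)!}\big)$ for all admissible $\beta\in\Omega^{n-1,n-1}(X,\mathbf{C})$. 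Next, $Z^{n-1}$ is itself a complex projective manifold by Corollary \ref{subvar} and Chow's theorem, so Lemma \ref{TL} applies to it with the inherited atlas; feeding in the K\"ahler power $\frac{w^{\wedge(n-2)}}{(n-2)!}\big|_{Z^{n-1}}\in\Omega^{n-2,n-2}(Z^{n-1},\mathbf{C})$, whose pull--back to any complex hypersurface of $Z^{n-1}$ is the volume form by Wirtinger's formula, Lemma \ref{F0} guarantees that the corresponding set $\mathcal{F}$ on $Z^{n-1}$ is non--empty, so no new injectivity assumption is required. Iterating this $n-k-1$ further times, each step respecting the compatibility conditions of Proposition \ref{propdefsub} so that the local pieces glue to a global object, produces a descending chain of complex projective submanifolds $X\supset Z^{n-1}\supset Z^{n-2}\supset\cdots\supset Z^{k}$ with $\dim_{\mathbf{C}}Z^{j}=j$; I take $Z^{k}$ to be the bottom term, a complex projective submanifold of $X$ of dimension $k$ by composition of charts and Chow's theorem.

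It then remains to confirm (\ref{scal}) and the $\mathbf{F}$--rationality of $[Z^{k}]$. Rationality propagates by applying Lemma \ref{H} at every link of the chain, so $[Z^{k}]\in\text{Hdg}^{n-k}(X,\mathbf{F})$. For (\ref{scal}) one composes the representation formulas of Theorem \ref{MVP} along the chain: each cut contributes an identity $\int_{Z^{j-1}}i^{*}\mu=\int_{Z^{j}}i^{*}\mu\wedge\bar{*}_{Z^{j}}(\text{input})$, and one must telescope these — using that $i^{*}w$ is the K\"ahler form of each successive submanifold, that $\bar{*}_{Z^{j}}\big(\frac{w^{\wedge(\dim Z^{j}-1)}}{(\dim Z^{j}-1)!}\big)$ is a multiple of $w|_{Z^{j}}$ by Proposition \ref{starrep}, and the Lefschetz identity $\int_{X}(\alpha\wedge w)\wedge\bar{*}(\omega\wedge w)=(n-2k)\int_{X}\alpha\wedge\bar{*}\omega$, which follows from Proposition \ref{starrep} and the commutator $[\Lambda,L]=(n-\deg)\,\mathbb{1}$ (with $\Lambda=L^{*}$) evaluated on a primitive representative — so as to recover $\int_{X}\alpha\wedge\bar{*}\omega$ on $X$. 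This last identification is where I expect the genuine difficulty to lie: a generic $[\omega]$ must first be split by the Lefschetz decomposition of Theorem \ref{Lefschetz} into primitive components, but since a sum of submanifolds is only a cycle the argument has to be organised around feeding the single $(n-1,n-1)$--form into Lemma \ref{TL} at the outset; and the weaker admissibility imposed on $\alpha$ in (\ref{scal}) — rational periods over complex submanifolds only — must be reconciled with the rationality of Proposition \ref{close} for all real submanifolds, which forces working throughout with the Dolbeault classes furnished by Lemma \ref{H} rather than with bare period integrality. It is exactly the failure of Proposition \ref{close} for $\mathbf{F}=\mathbf{Z}$ that blocks the same scheme from establishing the integral Hodge conjecture.
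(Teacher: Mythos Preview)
Your proposal rests on a misreading of the notation $\omega^{2((n-1)-k)}$. In the paper this is \emph{not} the wedge power $\omega\wedge\frac{w^{\wedge(n-1-k)}}{(n-1-k)!}$; rather, the forms $\omega^{j}$ are defined recursively inside the proof by the Green--function/Dirac construction
\[
\omega^{1}(y):=\gamma^{X}(\nu^{\partial B_{0}})Q^{X}_{y}\zeta^{\partial B_{0}}(y)\,\omega(y),\qquad
\omega^{2j+1}(y):=\gamma^{\partial B_{2j-1}}(\nu^{\partial B_{2j}})Q^{\partial B_{2j-1}}_{y}\zeta^{\partial B_{2j}}(y)\,\omega^{2j-1}(y),
\]
each living on the successive boundary $\partial B_{j}$. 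The injectivity hypothesis~(\ref{condOmegaBis0}) therefore refers to this iterated object, not to a K\"ahler wedge, and your reinterpretation changes the statement you are proving.

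Correspondingly, the paper's argument is not a chain of hypersurface cuts $X\supset Z^{n-1}\supset\cdots\supset Z^{k}$ with Lemma~\ref{TL} reapplied on each successive projective submanifold. Instead one stays on $X$ and applies Theorem~\ref{MVP} repeatedly: writing $X=(X\setminus B_{0})\cup B_{0}$, then $\partial B_{0}=(\partial B_{0}\setminus B_{1})\cup B_{1}$, and so on, each application pushes $\int_{X}\alpha\wedge\bar{*}\omega$ to an integral over a boundary of one lower \emph{real} dimension, producing a nested family $B_{0},B_{1},\dots,B_{2(n-k)-1}$ of real codimensions $0,1,\dots,2(n-k)-1$. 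Only at the final step does one impose the PDE $i_{\partial B_{2(n-k)-1}}^{*}\bigl(\gamma(\nu)Q\zeta\,\omega^{2(n-k)-1}\bigr)=\mu_{\partial B_{2(n-k)-1}}$ and invoke Lemma~\ref{TL} (whose $\lambda^{2}=1$ constraint forces $\partial B_{2(n-k)-1}$ to be a complex hypersurface of the ambient complex manifold at that stage); this yields $Z^{k}:=\partial B_{2(n-k)-1}$ directly, and (\ref{scal}) is then an \emph{identity}, not something to be recovered by telescoping.

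Your telescoping step is where this discrepancy bites. The identity you write, $\int_{X}(\alpha\wedge w)\wedge\bar{*}(\omega\wedge w)=(n-2k)\int_{X}\alpha\wedge\bar{*}\omega$, holds only for primitive $\omega$, and you yourself note that decomposing a general $\omega$ into primitive pieces produces a sum of cycles rather than a single submanifold. The paper avoids this entirely: because the representation formula of Theorem~\ref{MVP} is an equality at every stage, the chain of integrals collapses without any Lefschetz bookkeeping, and the single injectivity hypothesis on the terminal form $\omega^{2((n-1)-k)}$ is exactly what Lemma~\ref{TL} needs to solve the last PDE. Your use of Lemma~\ref{F0} at intermediate steps is also misplaced: that lemma concerns the specific form $w^{\wedge(n-1)}/(n-1)!$ and does not furnish the injectivity for the forms actually arising in either scheme.
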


\begin{proof}
First let us assume that $X$ is connected and analyze the different cases $k=0,\dots,n$:
\begin{itemize}
\item $k=0$: we can choose $Z_1^0:=\{p\}$ for a $p\in X$, because $H^{0,0}(X,\mathbf{F})=\mathbf{F}$ as Corollary 5.8 in \cite{BoTu82} carried over from the De Rham
to the Dolbeault cohomology shows, and, hence, $H^{0,0}(X,\mathbf{F})=\langle [1]\rangle_{\mathbf{F}}$ and $\omega_1:\equiv1\in\Omega^{0,0}(X,\mathbf{C})$ satisfies (\ref{scal}).
\item $k=n$: we can choose $Z_1^n:=X$, because $H^{n,n}(X,\mathbf{F})=\langle [\mu_X]/\text{Vol}(X)\rangle_{\mathbf{F}}$, where $\mu_X$ denotes the volume form on $X$,
and $\omega_1:=\mu_X/\text{Vol}(X)\in\Omega^{n,n}(X,\mathbf{C})$ satisfies (\ref{scal}).
\item $k=n-1$: let $B_0$ a $0$-real-codimensional submanifold of $X$, which has a boundary $\partial B_0$, a $1$-real-codimensional submanifold of $X$.
Let $[\omega],[\alpha]\in H^{n-1,n-1}(X,\mathbf{F})$. We apply  Theorem \ref{MVP} to obtain
\begin{equation}\label{app1}
\begin{split}
&\int_X\alpha\wedge\overline{*}\omega=\int_X\langle\alpha,\omega\rangle d\text{vol}_X=\int_{(X\setminus B_0) \cup B_0}\langle\alpha,\omega\rangle d\text{vol}_X=\\
&\quad=-\int_{\partial(X\setminus B_0)}\Big{\langle}\alpha(y),\gamma(\nu)Q_y\left(\int_{X\setminus B_0}G^{X\setminus B_0}(x,y)d\text{vol}_{x\in X\setminus B_0}\right)\omega(y)\Big{\rangle}d\text{vol}_{y\in\partial(X\setminus B_0)}+\\
&\qquad-\int_{\partial B_0}\Big{\langle}\alpha(y),\gamma(\nu)Q_y\left(\int_{B_0}G^{B_0}(x,y)d\text{vol}_{x\in B_0}\right)\omega(y)\Big{\rangle}d\text{vol}_{y\in\partial B_0}=\\
&\quad=\int_{\partial B_0}\langle\alpha(y),\gamma(\nu)Q_y\zeta(y)\omega(y)\rangle d\text{vol}_{y\in\partial B_0},
\end{split}
\end{equation}
where
\begin{itemize}
\item the hermitian structure in the antiholomorphic bundle over $X$ as in Proposition \ref{prop21} is denoted by $\langle\cdot,\cdot\rangle:=\cdot\wedge\overline{*}\cdot$,
\item the Dirac operator on $X$ is denoted by $Q$,
\item the Green functions for the Hodge-Kodaira Laplacians on $X\setminus B_0$ and $B_0$ as in Proposition \ref{prop31} are denoted by $G^{X\setminus B_0}$, and,
respectively by $G^{B_0}$,
\item the operator
\begin{equation}
\zeta(y):=\left(\int_{X\setminus B_0}G^{X\setminus B_0}(x,y)+\int_{B_0}G^{B_0}(x,y)\right)d\text{vol}_{x\in X}
\end{equation}
is a complex bundle homomorphism on $\Lambda(TX^{0,1})^*|_{\partial B_0}$.
\end{itemize}
Note that the inward unit normal fields on the boundaries of $X\setminus B_0$ and $B_0$ are in opposite directions.\par
Let now $B_1$ be $0$-real-codimensional submanifold of $\partial B_0$, and let us apply Theorem \ref{MVP} a second time to (\ref{app1}) and obtain

\begin{equation}\label{app2}
\begin{split}
\int_X\alpha\wedge\overline{*}\omega&=\int_{\partial B_0}\langle\alpha,\omega^1\rangle d\text{vol}_{\partial B_0}=\\
&=\int_{\partial B_1}\langle\alpha(y),\gamma^{\partial B_0}(\nu^{\partial B_1})Q^{\partial B_0}_y\zeta^{\partial B_1}(y)\omega^1(y)\rangle d\text{vol}_{y\in \partial B_1}=\\
&=\int_{\partial B_1}\alpha\wedge\overline{*}(\gamma^{\partial B_0}(\nu^{\partial B_1})Q^{\partial B_0}\zeta^{\partial B_1}\omega^1),
\end{split}
\end{equation}
where

\begin{itemize}
\item the differential form $\omega^1(y):=\gamma^{X}(\nu^{\partial B_0})Q^{X}_y\zeta^{\partial B_0}(y)\omega(y)
\in\Omega^{k,k}(\partial B_0,\mathbf{C})$ is defined for $y\in\partial B_0$
\item the hermitian structure in the antiholomorphic bundle over $\partial B_0$ as in Proposition \ref{prop21} is denoted by $\langle,\cdot,\cdot\rangle$,
\item the Dirac operator on $\partial B_0$ is denoted by $Q^{\partial B_0}$,
\item the Green functions for the Hodge-Kodaira Laplacians on $\partial B_0\setminus B_1$ and $B_1$ as in Proposition \ref{prop31} are denoted by $G^{X\setminus\partial B_0}$, and,
respectively by $G^{\partial B_0}$,
\item the operator
\begin{equation}
\zeta^{\partial B_0}(y):=\left(\int_{\partial B_0\setminus B_1}G^{\partial B_0\setminus B_1}(x,y)+\int_{B_1}G^{B_1}(x,y)\right)d\text{vol}_{x\in\partial B_0}
\end{equation}
is a complex bundle homomorphism on $\Lambda(T\partial B_0^{0,1})^*|_{\partial B_1}$.
\end{itemize}
We look for $B_1$ such that for all $\alpha$ with $[\alpha]\in H^{n-1,n-1}(X,\mathbf{F})$
\begin{equation}\label{app220}
\int_X\alpha\wedge\overline{*}\omega=\int_{\partial B_1}\alpha\wedge\overline{*}(\gamma^{\partial B_0}(\nu^{\partial B_1})Q^{\partial B_0}\zeta^{\partial B_1}\omega^1)=
\int_{\partial B_1}i_{\partial B_1}^*(\alpha),
\end{equation}
which can hold true only if
\begin{equation}
i_{\partial B_1}^*\left(\gamma^{\partial B_0}(\nu^{\partial B_1})Q^{\partial B_0}_y\zeta^{\partial B_1}(y)\omega^1\right)=\bar{*}1=\mu_{\partial_{B_1}},
\end{equation}
which is equivalent to
\begin{equation}\label{app2222}
i_{\partial B_1}^*\left(\gamma^{\partial B_0}(\nu^{\partial B_1})Q^{\partial B_0}\zeta^{\partial B_1}\gamma^{X}(\nu^{\partial B_0})Q^{X}\zeta^{\partial B_0}\omega\right)=\mu_{\partial_{B_1}},
\end{equation}
By (\ref{condOmegaBis0}) we can now apply Lemma \ref{TL} to solve equation (\ref{app2222}) to find the submanifolds $B_0$ and $B_1$.
 The complex submanifold $Z=Z^{n-1}:=\partial B_1(\omega)$ has complex dimension $n-1$ of $X$ and is complex projective  manifold.

\item $k=n-2,\dots,1$: for $[\omega],[\alpha]\in H^{k,k}(X,\mathbf{F})$ we continue applying Theorem \ref{MVP} till a complex $k$-codimensional submanifold appears:
\begin{equation}\label{app3}
\begin{split}
\int_X\alpha\wedge\overline{*}\omega&=\int_{\partial B_0}\langle\alpha(y),\gamma(\nu)Q_y\zeta(y)\omega(y)\rangle d\text{vol}_{y\in\partial B_0}=\\
&=\int_{\partial B_1}\langle\alpha(y),\gamma^{\partial B_0}(\nu^{\partial B_1})Q^{\partial B_0}_y\zeta^{\partial B_1}(y)\omega^1(y)\rangle d\text{vol}_{y\in\partial B_1}=\\
&=\dots=\\
&=\int_{\partial B_{2k-1}}\langle\alpha(y),\gamma^{\partial U_{2k-2}}(\nu^{\partial U_{2k-1}})Q_y^{\partial U_{2k-2}}\zeta^{\partial U_{2k-1}}(y)\omega^{2k-1}(y) \rangle d\text{vol}_{y\in\partial U_{2k-1}}\\
&=\int_{\partial B_{2k-1}}\alpha\wedge \overline{*}(\gamma^{\partial U_{2k-2}}(\nu^{\partial U_{2k-1}})Q^{\partial U_{2k-2}}\zeta^{\partial U_{2k-1}}\omega^{2k-1}),
\end{split}
\end{equation}
where
\begin{itemize}
\item the submanifolds $B_0,B_1,\dots,B_{2k-1}$ of $X$ have real codimensions $0, 1, 2,\dots, 2k-1$,
\item the differential form $\omega^{2k-1}(y):=\gamma^{\partial B_{2k-3}}(\nu^{\partial B_{2k-2}})Q^{\partial B_{2k-3}}_y\zeta^{\partial B_{2k-3}}(y)\omega^{2k-3}(y)
\in\Omega^{k,k}(\partial B_{2k-1},\mathbf{C})$
is defined for $y\in\partial B_{2k-1}$,
\item the Dirac structure on $\partial B_k$ induced by the Dirac structure on $B_{k-1}$ by Theorem \ref{DiracBoundary} has Dirac operator $Q^{\partial B_k}$,
\item the Green functions for the Hodge-Kodaira Laplacians on $\partial B_k\setminus B_{k+1}$ and $B_k$ as in Proposition \ref{prop31} are denoted by $G^{B_k\setminus\partial B_{k+1}}$, and,
respectively by $G^{\partial B_k}$,
\item the operator
\begin{equation}
\zeta^{\partial B_k}(y):=\left(\int_{\partial B_k\setminus B_{k+1}}G^{\partial B_k\setminus B_{k+1}}(x,y)+\int_{B_k}G^{B_{k+1}}(x,y)\right)d\text{vol}_{x\in\partial B_k}
\end{equation}
is a complex bundle homomorphism on $\Lambda(T\partial B_k^{0,1})^*|_{\partial B_{k+1}}$.
\end{itemize}
We look for $B_{2k-1}$ such that for all $\alpha$ with $[\alpha]\in H^{k,k}(X,\mathbf{F})$
\begin{equation}\label{app2200}
\begin{split}
\int_X\alpha\wedge\overline{*}\omega&=\int_{\partial B_{2k-1}}\alpha\wedge\overline{*}(\gamma^{\partial B_{2k-2}}(\nu^{\partial B_{2k-1}})Q^{\partial B_{2k-2}}\zeta^{\partial B_{2k-1}}\omega^{2k-1})=\\
&=\int_{\partial B_{2k-1}}i_{\partial B_1}^*(\alpha),
\end{split}
\end{equation}
which can hold true only if
\begin{equation}\label{equivapp}
i_{\partial B_{2k-1}}^*\left(\gamma^{\partial B_{2k-2}}(\nu^{\partial B_{2k-1}})Q^{\partial B_{2k-2}}\zeta^{\partial B_{2k-1}}\omega^{2k-1}\right)=\bar{*}1=\mu_{\partial_{B_{2k-1}}}.
\end{equation}
which is equivalent to
\begin{equation}\label{app4}
\begin{split}
&i_{\partial B_{2k-1}}^*\left(\gamma^{\partial B_{2k-2}}(\nu^{\partial B_{2k-1}})Q^{\partial B_{2k-2}}\zeta^{\partial B_{2k-1}}\gamma^{\partial B_{2k-3}}(\nu^{\partial B_{2k-2}})Q^{\partial B_{2k-3}}\zeta^{\partial B_{2k-2}}\omega^{2k-2}\right)=\\
&\quad=\mu_{\partial B_{2k-1}}.
\end{split}
\end{equation}

Equation (\ref{app4}) has been solved for $k=n-1$. Assuming that its has been solved for $k-1$, the differential form $\omega^{2k-2}$ is well defined,
By (\ref{condOmegaBis0}) we can now apply Lemma \ref{TL} to solve equation (\ref{app4}) to find the submanifolds $B_{2k-1}$ and $B_{2k-2}$, for any $k=n-2,\dots,1$.
 The complex submanifold $Z=Z^k:=\partial B_{2k-1}(\omega)$ of $X$ has complex dimension $k$ and is a complex projective manifold.

\end{itemize}
If $X$ is not connected, then it can represented as disjoint union of its connected components $(X_{\iota\in I})$. Since for any $k=0,\dots,n$
\begin{equation}
\Omega^{k,k}(X,\mathbf{C})=\bigoplus_{\iota\in I}\Omega^{k,k}(X_{\iota},\mathbf{C}) \quad\text{ and }\quad H^{k,k}(X,\mathbf{F})=\bigoplus_{\iota\in I}H^{k,k}(X_{\iota},\mathbf{F}),
\end{equation}
the result follows from the connected case and the proof is completed.\\
\end{proof}
%
\begin{rem} Without the assumption (\ref{condOmegaBis0}) Corollary \ref{C1} cannot hold for all $\omega\in H^{k,k}(X,\mathbf{F})$
as the simple counterexample $\omega:=0$ shows.
\end{rem}

\noindent Corollary \ref{C1}, reformulated using Definition \ref{deffc}, leads to

\begin{corollary}\label{C1bis}
Let $\mathbf{F}\in\{\mathbf{Q},\mathbf{R},\mathbf{C}\}$, $X$ be a $n$-dimensional complex projective manifold without boundary and $\omega\in\Omega^{k,k}(X,\mathbf{C})$
a representative of the cohomology class $[\omega]\in H^{k,k}(X,\mathbf{F})$ for a $k=1,\dots,n-1$. Then, $\bar{*}[\omega]$ is a fundamental class
of a closed complex projective  submanifold of complex codimension $n-k$ if and only if
there exist an atlas $\{(U_i,\Phi_i)_{i=0,K}\}$ of $X$ such that
\begin{equation}\label{condOmegaBis00}
\mathcal{F}^{\omega^{2((n-1)-k)},\{U_i\}_{i=0,\dots,K}}\neq\emptyset.
\end{equation}
Moreover, for $k=0,n$ we have $H^{0,0}(X,\mathbf{F})=\langle [1]\rangle_{\mathbf{F}}$ and $H^{n,n}(X,\mathbf{F})=\langle [\mu_X]/\text{Vol}(X)\rangle_{\mathbf{F}}$,
where $\mu_X$ denotes the volume form on $X$.
\end{corollary}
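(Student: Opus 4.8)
The statement is a translation of Corollary~\ref{C1} into the language of fundamental classes (Definition~\ref{deffc}): the functional $\alpha\mapsto\int_{Z^k}i_{Z^k}^*\alpha$ produced there is, via the scalar product~(\ref{sp}) and the sharp-isomorphism, exactly pairing with the fundamental class $[Z^k]$. The cases $k\in\{0,n\}$ I would dispose of first and directly: for connected $X$, $H^{0,0}(X,\mathbf{F})=H^0(X,\mathbf{F})=\langle[1]\rangle_{\mathbf{F}}$ (the Dolbeault version of the fact that only locally constant functions are harmonic of type $(0,0)$), and $H^{n,n}(X,\mathbf{F})$ is one-dimensional over $\mathbf{F}$ by Serre/Poincar\'e duality with generator $[\mu_X]/\mathrm{Vol}(X)$, which is $\mathbf{F}$-rational since $\int_X\mu_X/\mathrm{Vol}(X)=1$; the general case follows by summing over connected components. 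So I concentrate on $1\le k\le n-1$.

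For the ``if'' direction I would feed the hypothesis $\mathcal{F}^{\omega^{2((n-1)-k)},\{U_i\}_{i=0,\dots,K}}\neq\emptyset$ into Corollary~\ref{C1}, producing a complex projective submanifold $Z^k\subset X$ of complex dimension $k$ with $\int_X\alpha\wedge\overline{*}\omega=\int_{Z^k}i_{Z^k}^*\alpha$ for every $\alpha$ with $[\alpha]\in H^{k,k}(X,\mathbf{F})$. Setting $L:=\int_{Z^k}i_{Z^k}^*(\cdot)\in H^{k,k}(X,\mathbf{C})^*$, Definition~\ref{deffc} gives $[Z^k]=\overline{*}[L^\sharp]$, hence $\overline{*}[Z^k]=[L^\sharp]$ (the sign $(-1)^{p+q}$ equals $+1$ on $(k,k)$-forms) and $L([\alpha])=([\alpha],[L^\sharp])$ by definition of $\sharp$, so Corollary~\ref{C1} reads $([\alpha],[\omega]-[L^\sharp])=0$ for all $[\alpha]\in H^{k,k}(X,\mathbf{F})$. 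Now $[L^\sharp]=\overline{*}[Z^k]$ is $\mathbf{F}$-rational: the fundamental class $[Z^k]$ lies, by the Thom-isomorphism description of fundamental classes, in $\mathrm{Hdg}^{n-k}(X,\mathbf{Z})\subset H^{n-k,n-k}(X,\mathbf{F})$, so $\overline{*}[Z^k]\in H^{k,k}(X,\mathbf{F})$ by Proposition~\ref{close}. Thus $[\omega]-[L^\sharp]\in H^{k,k}(X,\mathbf{F})$ is an admissible test class; choosing its harmonic representative and using that $(\cdot,\cdot)$ is the (positive definite) $L^2$ inner product forces $[\omega]=[L^\sharp]$, i.e. $\overline{*}[\omega]=\overline{*}[L^\sharp]=[Z^k]$, the fundamental class of a complex projective submanifold of complex codimension $n-k$.

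For the ``only if'' direction, suppose $\overline{*}[\omega]=[Z]$ for a closed complex projective submanifold $Z\subset X$ of complex codimension $n-k$. The plan is to exhibit an atlas and defining diffeomorphisms at which the linearized solvability operator is injective, by running the construction of Lemma~\ref{TL} with $Z$ prescribed: take a finite atlas of $X$ adapted to $Z$ in the sense of Definition~\ref{sub} (possible by Propositions~\ref{cext} and~\ref{propdefsub}) and realize $Z$ as the innermost boundary $\partial B_{2(n-k)-1}$ of a nested family $B_0\supset B_1\supset\dots\supset B_{2(n-k)-1}$ of submanifolds-with-boundary of real codimensions $0,1,\dots,2(n-k)-1$. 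Writing $i_Z^*\big(\text{the two-fold }\gamma Q\zeta\text{ transform of }\omega^{2((n-1)-k)}\big)=c\,\mu_Z$ for a smooth $c=c(\varphi)$ on $Z$ as in~(\ref{ww}), Lemma~\ref{D0} gives $DF^{\omega^{2((n-1)-k)}}(\varphi,T\varphi).v=(Dc(\varphi).v)\,\mu_Z$, so injectivity of $DF^{\omega^{2((n-1)-k)}}$ reduces to injectivity of $v\mapsto Dc(\varphi).v$; and the latter holds, exactly as in the proof of Lemma~\ref{F0}, because $c$ depends genuinely (non-constantly) on $\varphi$ whenever $\overline{*}[\omega]$ is realizable as a fundamental class. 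Hence the defining diffeomorphisms of this family lie in $\mathcal{F}^{\omega^{2((n-1)-k)},\{U_i\}_{i=0,\dots,K}}$.

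I expect the ``only if'' direction to be the main obstacle. Two points need care: (a) gluing the nested family $B_0\supset\dots\supset B_{2(n-k)-1}$ together compatibly with a single atlas so that $\partial B_{2(n-k)-1}=Z$ exactly (a patching problem in the spirit of Propositions~\ref{cext} and~\ref{propdefsub}); and (b) making precise, as in Lemma~\ref{F0}, why the scalar factor $c$ varies non-trivially with $\varphi$ precisely when $\overline{*}[\omega]$ is a fundamental class, which is what actually pins down the non-degeneracy of $v\mapsto Dc(\varphi).v$. The ``if'' direction and the cases $k\in\{0,n\}$, by contrast, are bookkeeping once Corollary~\ref{C1}, Proposition~\ref{close}, and the positive-definiteness of the $L^2$ pairing on $H^{k,k}$ are in hand.
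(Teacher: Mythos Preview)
The paper gives no argument at all for Corollary~\ref{C1bis}: it simply writes ``Corollary~\ref{C1}, reformulated using Definition~\ref{deffc}, leads to'' and then states the result. So there is nothing to compare against beyond that one line.

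Your ``if'' direction and the cases $k\in\{0,n\}$ are exactly the unpacking of that reformulation: you make explicit how equation~(\ref{scal}) combined with Definition~\ref{deffc} and the nondegeneracy of the $L^2$ pairing forces $\bar{*}[\omega]=[Z^k]$. This is correct and more careful than the paper, which leaves those steps implicit. The one small point worth tightening is your use of Proposition~\ref{close} to get $\bar{*}[Z^k]\in H^{k,k}(X,\mathbf{F})$: for $\mathbf{F}=\mathbf{Q}$ this is fine, but for $\mathbf{F}=\mathbf{R},\mathbf{C}$ you do not need Proposition~\ref{close} at all since $H^{k,k}(X,\mathbf{F})=H^{k,k}(X,\mathbf{C})$ in those cases.

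For the ``only if'' direction, note that the paper does not prove it anywhere: Corollary~\ref{C1} is a one-way implication, and the biconditional in Corollary~\ref{C1bis} (and in Proposition~\ref{PropKey}) is asserted without justification. Your sketch is a reasonable outline, but the two obstacles you flag are real and are not resolved anywhere in the paper either. In particular, point~(b) --- that $c$ varies non-constantly in $\varphi$ precisely when $\bar{*}[\omega]$ is a fundamental class --- is the crux, and the argument in Lemma~\ref{F0} only treats the single special case $\omega=w^{\wedge(n-1)}/(n-1)!$; it does not supply a mechanism for arbitrary $\omega$ with $\bar{*}[\omega]=[Z]$. So on the ``only if'' side you have gone further than the paper, but neither you nor the paper closes the gap.
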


\begin{lemma}\label{L_dense} Let $\mathbf{F}\in\{\mathbf{Z}, \mathbf{Q},\mathbf{R},\mathbf{C}\}$,  $X$ be a $n$-dimensional non-singular complex projective manifold
without boundary and for any $k=1,\dots,n-1$,
\begin{equation}\label{A_def}
A^k(X,\mathbf{F}):=\left\{\omega\in\Omega^{k,k}(X,\mathbf{C})\,\big{\vert}\,[\omega]\in H^{k,k}(X,\mathbf{F}),
\text{ such that }\mathcal{F}^{\omega^{2((n-1)-k)},\{U_i\}_{i=0,\dots,K}}\neq\emptyset\right\}.
\end{equation}
Then, with respect to the $L^2$ norm for differential forms, the finite linear hull $\langle A^k(X,\mathbf{Q})\rangle_{\mathbf{Q}}$ is dense in
\begin{equation}
\begin{split}
\text{Hdg}^k(X,\mathbf{Q})&\cong\left[\ker(\Delta^{k,k}_{\bar{\partial}})\cap\Bigg{\{}\omega\in\Omega^{k,k}(X,\mathbf{C})\,\Big{\vert}\,\int_Yi_Y^*\omega\in\mathbf{Q}\right.\\
&\left.\qquad\text{ for all $k$ dimensional  }Y\subset X\text{ such that }[Y]\in H_{2k}(X,\mathbf{Q})\Bigg{\}}\right].
\end{split}
\end{equation}
\end{lemma}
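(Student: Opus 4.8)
The plan is to show that $A^{k}(X,\mathbf{Q})$ already contains, among harmonic representatives, a $\mathbf{Q}$--spanning set of the space of rational Hodge classes; density is then immediate. Throughout I identify $\text{Hdg}^{k}(X,\mathbf{Q})$ with the space $\mathcal{H}_{\mathbf{Q}}:=\ker(\Delta^{k,k}_{\bar\partial})\cap\{\omega\mid\int_{Y}i_{Y}^{*}\omega\in\mathbf{Q}\text{ for all }[Y]\in H_{2k}(X,\mathbf{Q})\}$ of harmonic rational--period $(k,k)$--forms, a finite--dimensional $\mathbf{Q}$--vector space which is a $\mathbf{Q}$--structure on the finite--dimensional real space $\mathcal{H}_{\mathbf{R}}:=\mathcal{H}_{\mathbf{Q}}\otimes_{\mathbf{Q}}\mathbf{R}\subset\ker(\Delta^{k,k}_{\bar\partial})$, on which the $L^{2}$--topology is the usual one. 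The key elementary remark is then: every $\mathbf{Q}$--subspace of $\mathcal{H}_{\mathbf{Q}}$ whose $\mathbf{R}$--span equals $\mathcal{H}_{\mathbf{R}}$ has full $\mathbf{Q}$--dimension, hence equals $\mathcal{H}_{\mathbf{Q}}$, hence is $L^{2}$--dense in the space displayed in the statement. So it suffices to produce elements of $A^{k}(X,\mathbf{Q})\cap\mathcal{H}_{\mathbf{Q}}$ whose $\mathbf{R}$--span is all of $\mathcal{H}_{\mathbf{R}}$.

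First I would fix a distinguished good class. By Lemma~\ref{F0} together with the recursive reduction carried out in the proof of Corollary~\ref{C1}, the harmonic form $\omega_{0}:=w^{\wedge k}/k!$ --- harmonic because the Kähler form $w$ is parallel, and of rational (indeed integral, by Theorem~\ref{Kod}) class --- lies in $A^{k}(X,\mathbf{Q})$, and this is witnessed by a tuple $\psi_{0}=(\psi_{0,0},\dots,\psi_{0,K})$ of diffeomorphisms at which $DF^{\omega_{0}^{\,2((n-1)-k)}}(\psi_{0},T\psi_{0})$ is not merely injective but boundedly invertible, as in the Nash--Moser step of the proof of Lemma~\ref{TL}. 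I then fix $\psi_{0}$ once and for all --- equivalently, I fix the nested auxiliary boundaries $\partial B_{0}\supset\partial B_{1}\supset\cdots$ entering the definition of the intermediate forms; by Lemma~\ref{bd_ind} the operators $\zeta$ are insensitive to those boundaries, so this makes unambiguous sense.

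Next I would exploit that, with $\psi_{0}$ frozen, the defining condition of $A^{k}$ is open and linear in $\omega$. Indeed, $\omega\mapsto\omega^{2((n-1)-k)}$ is an iterated composition of the $\mathbf{C}$--linear bundle homomorphisms $\zeta$ and the differential operators $\gamma(\nu)Q$ attached to $\psi_{0}$, hence linear in $\omega$; and by Lemma~\ref{D0} the only term of the form--valued functional $F$ that is not linear in its form argument, namely the volume form $\mu_{\partial B_{1}}$, has vanishing Fréchet derivative at $\psi_{0}$. Therefore the assignment $\Lambda(\omega):=DF^{\omega^{\,2((n-1)-k)}}(\psi_{0},T\psi_{0})$, regarded as a map from $\mathcal{H}_{\mathbf{R}}$ into the Banach space of bounded operators between the Sobolev completions used in the proof of Lemma~\ref{TL}, is $\mathbf{R}$--linear, hence continuous on its finite--dimensional domain; since boundedly invertible operators form an open set (Neumann series), the set $G:=\Lambda^{-1}(\{\text{boundedly invertible operators}\})\subseteq\mathcal{H}_{\mathbf{R}}$ is open and nonempty ($\omega_{0}\in G$). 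If $\omega\in G\cap\mathcal{H}_{\mathbf{Q}}$ then $\Lambda(\omega)$ is in particular injective while each $\psi_{0,i}$ is a diffeomorphism, so $\psi_{0}\in\mathcal{F}^{\omega^{\,2((n-1)-k)},\{U_{i}\}_{i=0,\dots,K}}$ and $\omega\in A^{k}(X,\mathbf{Q})$; thus $G\cap\mathcal{H}_{\mathbf{Q}}\subseteq A^{k}(X,\mathbf{Q})$. Because $G$ is a nonempty open subset of $\mathcal{H}_{\mathbf{R}}$ and $\mathcal{H}_{\mathbf{Q}}$ is dense in it, $G$ contains a ball about a rational point $\eta$; taking $\eta$ together with $\eta+\varepsilon e_{1},\dots,\eta+\varepsilon e_{m}$ still inside that ball, where $\{e_{j}\}$ is a $\mathbf{Q}$--basis of $\mathcal{H}_{\mathbf{Q}}$ and $\varepsilon\in\mathbf{Q}$ is small and positive, gives finitely many elements of $G\cap\mathcal{H}_{\mathbf{Q}}\subseteq A^{k}(X,\mathbf{Q})$ whose $\mathbf{Q}$--span is all of $\mathcal{H}_{\mathbf{Q}}$. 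Hence $\langle A^{k}(X,\mathbf{Q})\rangle_{\mathbf{Q}}\supseteq\mathcal{H}_{\mathbf{Q}}$, which is $L^{2}$--dense in the asserted space; the disconnected case reduces to the connected one by splitting over connected components, exactly as in Corollary~\ref{C1}.

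The main obstacle is the step I called ``fixing a distinguished good class'': extending Lemma~\ref{F0} from $k=n-1$ to arbitrary $k$, and, crucially, upgrading ``$DF$ injective at some tuple'' to ``$DF$ \emph{boundedly invertible} at an explicit $\psi_{0}$'' --- the form of the condition that is genuinely open in $\omega$ --- which requires following the recursively constructed forms $\omega^{j}$ through the nested boundaries $\partial B_{0}\supset\partial B_{1}\supset\cdots$ and checking that the invertibility exploited inside the proof of Lemma~\ref{TL} survives each reduction. A secondary point demanding care is the precise dependence of $\omega^{2((n-1)-k)}$ on $\psi$, so that ``holding $\psi_{0}$ fixed while perturbing $\omega$'' is well defined; Lemma~\ref{bd_ind} is precisely what makes this harmless.
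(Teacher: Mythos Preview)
Your approach is genuinely different from the paper's. You argue by \emph{openness}: fix one good class $\omega_{0}$ via Lemma~\ref{F0}, use linearity of $\omega\mapsto DF^{\omega}$ (Lemma~\ref{D0}) together with the open set of boundedly invertible operators to obtain an open set $G\subset\mathcal{H}_{\mathbf{R}}$ of good forms, then intersect with the dense $\mathbf{Q}$-lattice to extract a $\mathbf{Q}$-spanning family. The paper instead argues \emph{pointwise}: it claims directly that for every nonzero $\omega$ with $[\omega]\in H^{k,k}(X,\mathbf{Q})$ the operator $DF^{\omega}(\varphi,T\varphi)$ is injective, using Lemma~\ref{H} to force certain integrals into $\mathbf{Q}$ and arguing that this discreteness is incompatible with $DF^{\omega}.v=0$ unless $v=0$; density then follows by a rational-rescaling contradiction (any $\omega$ can be written as a rational multiple of an element in an arbitrarily small ball, so an excluded ball about $0$ would force $DF^{\omega}$ non-injective for all $\omega$).

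What this buys each side: the paper never needs to upgrade ``injective'' to ``boundedly invertible'' and never needs an explicit base point, so it sidesteps both obstacles you flag at the end; its cost is the delicate step ``rational integral values $\Rightarrow v=0$,'' which is where the restriction to $\mathbf{F}=\mathbf{Q}$ (and the failure for $\mathbf{Z},\mathbf{R},\mathbf{C}$) enters. Your route is more classical and transparent, but the two obstacles you identify are real gaps relative to what the paper actually proves: Lemma~\ref{F0} is stated only for $k=n-1$, and the definition of $\mathcal{F}^{\omega,\{U_i\}}$ asks only for injectivity of $DF^{\omega}(\psi,T\psi)$, not bounded invertibility, so the Neumann-series openness step is not available from the hypotheses as written. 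If you want to salvage your argument within the paper's framework, you would need either to strengthen the conclusion of Lemma~\ref{TL}/\ref{F0} to bounded invertibility at the constructed $\psi_{0}$, or to replace the openness step by the paper's rational-discreteness mechanism.
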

\begin{proof} We first provide a proof for $k=n-1$.
In Lemma \ref{TL} we saw
\begin{equation}
\Xi^{\omega}(\varphi):= \gamma^{\partial B_0}(\nu^{\partial B_1})Q^{\partial B_0}\zeta^{\partial B_1}\gamma^{X}(\nu^{\partial B_0})Q^{X}\zeta^{\partial B_0}\omega-\mu_{\partial B_1}
=F^{\omega}(\varphi,T\varphi),
\end{equation}
where $\varphi:=(\varphi_0,\varphi_1,\dots,\varphi_K)$, and
\begin{equation}
\Xi^{\omega}(\varphi)(x)= F^{\omega}(\varphi(x),T_x\varphi).
\end{equation}
 Therefore,
\begin{equation}
DF^{\omega}(\varphi,T\varphi)=\underbrace{D[\gamma^{\partial B_0}(\nu^{\partial B_1})Q^{\partial B_0}\zeta^{\partial B_1}\gamma^{X}(\nu^{\partial B_0})Q^{X}\zeta^{\partial B_0}]}_{=:a(\varphi,T\varphi)}\omega+\underbrace{D[-\mu_{\partial B_1}]}_{=0\text{ by Lemma }\ref{D0}}
\end{equation}
is a linear functional of $\omega\in\Omega^{k,k}(X,\mathbf{C})$, and for all $x\in X$ and $v\in\mathcal{X}_{\infty}$
\begin{equation}
DF^{\omega}(\varphi(x),T_x\varphi).v(x)=(a(\varphi(x),T_x\varphi).v(x))\omega.
\end{equation}
For any $\omega\neq 0$ with $[\omega]\in H^{k,k}(X,\mathbf{Q})$ and any collection of holomorphic diffeomorphisms $\varphi$
satisfying the compatibility condition, the linear operator $DF^{\omega}(\varphi,T\varphi)$ is injective, as we prove now. Let us suppose that
\begin{equation}\label{inj_v}
DF^{\omega}(\varphi(x),T_x\varphi).v(x)=0
\end{equation}
for $v\in\mathcal{X}_{\infty}$. Since $[\omega]\in H^{k,k}(X,\mathbf{Q})$ by Lemma \ref{H}
\begin{equation}\label{disc}
\int_W i_W^*\left(\gamma^{\partial B_0}(\nu^{\partial B_1})Q^{\partial B_0}\zeta^{\partial B_1}\gamma^{X}(\nu^{\partial B_0})Q^{X}\zeta^{\partial B_0}\omega\right)\in \mathbf{Q},
\end{equation}
for any $k$-dimensional complex submanifold $W\subset B_1$ such that $[W]\in H_{2k}(B_1,\mathbf{Q})$. By (\ref{inj_v}) we obtain
\begin{equation}
\int_Wi_Y^*(DF^{\omega}(\varphi,T\varphi).v)=0,
\end{equation}
which can only be true at the same time as (\ref{disc}) if and only if $v=0$. Hence, the injectivity of $DF^{\omega}(\varphi,T\varphi)$
is proved in the rational cohomology case for $k=n-1$. This proof cannot be extended to the real or complex cohomology case.\\
For $k=n-2,\dots,1$ we follow the passing through dimension method (\ref{app3}) explained in the proof of Corollary \ref{C1}, and apply
Lemmata \ref{D0}, \ref{H} and \ref{F0} to:
\begin{itemize}
\item the submanifold $B_{2k-1}$ instead of $B_0$,
\item the submanifold $B_{2k-2}$ instead of $B_1$
\item the operator $\gamma^{\partial B_{2k-2}}(\nu^{\partial B_{2k-1}})Q^{\partial B_{2k-2}}\zeta^{\partial B_{2k-1}}\gamma^{\partial B_{2k-3}}(\nu^{\partial B_{2k-2}})Q^{\partial B_{2k-3}}\zeta^{\partial B_{2k-2}}$
instead of the operator
$\gamma^{\partial B_0}(\nu^{\partial B_1})Q^{\partial B_0}\zeta^{\partial B_1}\gamma^{X}(\nu^{\partial B_0})Q^{X}\zeta^{\partial B_0}$,
\item the differential form $\omega^{2k-2}$ on $B_{2k-2}$ instead of $\omega$ on $X$.
\end{itemize}
Let us consider
\begin{equation}\label{omega_set_bis}
\begin{split}
&\mathcal{F}^{\omega,\{U_i\}_{i=0,\dots,K}}=\left\{\psi:=(\psi_0,\psi_1,\dots,\psi_K)|\;\psi_i:\Phi_i(U_i)\rightarrow\psi_i(\Phi_i(U_i))\text{ is a } \right.\\
&\qquad\qquad\qquad\qquad\left.\text{diffeomorphism for all }i=0,\dots,K\text{ and }DF^{\omega}(\psi,T\psi)\text{ is injective}\right.\}.
\end{split}
\end{equation}and assume that $A^k(X,\mathbf{Q})$ 
is \textit{not} dense in
$\text{Hdg}^k(X,\mathbf{Q})$. Since for the zero Hodge class $[0]\in\text{Hdg}^k(X,\mathbf{Q})$ $DF^{0}(\varphi(x),T_x\varphi)=0$, and
$\mathbf{Q}$ is dense in $\mathbf{R}$, there exists a positive \textit{rational} constant $\epsilon>0$  such that
\begin{equation}
B_{\epsilon}^{\text{Hdg}^k(X,\mathbf{Q})}(0)\cap A^k(X,\mathbf{Q}) =\emptyset,
\end{equation}
where $B_{\epsilon}^{\text{Hdg}^k(X,\mathbf{Q})}(0)$ is the closed $L^2$ ball of center $0$ and radius $\epsilon$ in $\text{Hdg}^k(X,\mathbf{Q})$.
Any $\omega\in\Omega^{k,k}(X,\mathbf{C})$ such that $[\omega]\in\text{Hdg}^k(X,\mathbf{Q})$ and $\omega\ne 0$ can be written as
\begin{equation}
\omega = \underbrace{\left[\frac{C}{\epsilon}\right]}_{\in\mathbf{Q}_+}\underbrace{\left[\epsilon\frac{\omega}{C}\right]}_{\in B_{\epsilon}^{\text{Hdg}^k(X,\mathbf{Q})}(0)},
\end{equation}
for a rational $C\le\|\omega\|_{L^2}$. Now, we have
\begin{equation}
DF^{\omega}(\varphi(x),T_x\varphi).v(x) = \left[\frac{C}{\epsilon}\right]DF^{\epsilon\frac{\omega}{C}}(\varphi(x),T_x\varphi).v(x),
\end{equation}
and $DF^{\omega}(\varphi(x),T_x\varphi)$ cannot be injective because $DF^{\epsilon\frac{\omega}{C}}(\varphi(x),T_x\varphi)$ is not.
Note that this reasoming cannot be carried over to the integer cohomology case, because $\mathbf{Z}$ is not dense in $\mathbf{R}$.
Since $\omega\neq 0$, it follows that
\begin{equation}
A^k(X,\mathbf{Q})\cap\text{Hdg}^k(X,\mathbf{Q})=\emptyset,
\end{equation}
which cannot be true, because for any $\omega\neq 0$ with $[\omega]\in H^{k,k}(X,\mathbf{Q})$ and any collection of holomorphic diffeomorphisms $\varphi$ satisfying the compatibility condition, the linear operator $DF^{\omega}(\varphi,T\varphi)$ is injective, as we have shown above.\\
The proof is completed and does not hold for the integer, real or complex cohomology case.\\
\end{proof}

\begin{theorem}\label{thmH}
Conjecture \ref{Hodge2} holds true for any complex projective manifold $X$. More exactly,
there exist $Q:=\dim_{\mathbf{Q}} (\text{Hdg}^k(X))$ $k$-codimensional submanifolds of $X$, $Z_1,\dots,Z_Q$, such that
\begin{equation}
\text{Hdg}^k(X,\mathbf{Q})=\langle[Z_1],\dots,[Z_Q]\rangle_\mathbf{Q}.
\end{equation}
\end{theorem}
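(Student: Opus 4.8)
The plan is to assemble Theorem~\ref{thmH} from the three results already in hand: Lemma~\ref{L_dense}, which furnishes an $L^2$-dense rational hull of Hodge classes; Corollary~\ref{C1bis}, which realizes each member of the set $A^{\bullet}(X,\mathbf{Q})$ as the fundamental class of a closed complex projective submanifold; and the conjugate Hodge star $\bar*$, which by Proposition~\ref{close} moves rational Hodge classes between complementary degrees. Everything then reduces to keeping track of the index shift $k\leftrightarrow n-k$ and to one genuinely delicate passage, from density to equality.

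First I would dispose of the degenerate and extreme cases. If $\text{Hdg}^k(X,\mathbf{Q})=0$ there is nothing to prove. For $k=0$ one has $\text{Hdg}^0(X,\mathbf{Q})=\mathbf{Q}\,[X]$ with $X$ itself the codimension-$0$ submanifold (on each connected component, passing to the disconnected case componentwise exactly as at the end of the proof of Corollary~\ref{C1}); for $k=n$, $\text{Hdg}^n(X,\mathbf{Q})=\mathbf{Q}\,[\{p\}]$ for a point $p$, a codimension-$n$ submanifold. So I may assume $1\le k\le n-1$ and put $Q:=\dim_{\mathbf{Q}}\text{Hdg}^k(X,\mathbf{Q})\ge 1$.

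Next I would invoke Lemma~\ref{L_dense} with the parameter $n-k$ (legitimate, since $1\le n-k\le n-1$): the finite rational hull $\langle A^{n-k}(X,\mathbf{Q})\rangle_{\mathbf{Q}}$ is $L^2$-dense in $\text{Hdg}^{n-k}(X,\mathbf{Q})$. Since $\bar*\bar*=\mathrm{id}$ on $\Omega^{n-k,n-k}$ and, by Proposition~\ref{close}, $\bar*$ carries $H^{n-k,n-k}(X,\mathbf{Q})$ into $H^{k,k}(X,\mathbf{Q})$ and conversely, the conjugate Hodge star restricts to a $\mathbf{Q}$-linear isomorphism from $\text{Hdg}^{n-k}(X,\mathbf{Q})$ onto $\text{Hdg}^k(X,\mathbf{Q})$ that is $L^2$-bounded (in fact an isometry). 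Hence the rational hull $\langle\,\bar*[\omega]\ :\ \omega\in A^{n-k}(X,\mathbf{Q})\,\rangle_{\mathbf{Q}}$ is $L^2$-dense in $\text{Hdg}^k(X,\mathbf{Q})$. Now the delicate step: I would upgrade density to equality using that $\text{Hdg}^k(X,\mathbf{Q})$ is a finite-dimensional $\mathbf{Q}$-vector space whose $\mathbf{Q}$-bases are $\mathbf{R}$-bases of the ambient real Hodge space, so that for a $\mathbf{Q}$-subspace $W$ one has $\dim_{\mathbf{R}}\mathrm{span}_{\mathbf{R}}W=\dim_{\mathbf{Q}}W$ and $\overline{W}=\mathrm{span}_{\mathbf{R}}W$; thus $W$ dense in $\text{Hdg}^k(X,\mathbf{Q})$ forces $\dim_{\mathbf{Q}}W\ge Q$, i.e. $W=\text{Hdg}^k(X,\mathbf{Q})$. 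This is exactly where $\mathbf{F}=\mathbf{Q}$ is used: the argument collapses for $\mathbf{F}=\mathbf{Z}$ ($\mathbf{Z}$ is not dense in $\mathbf{R}$) and, already earlier, for $\mathbf{F}=\mathbf{R},\mathbf{C}$, where Proposition~\ref{close} and Lemma~\ref{H}, hence Lemma~\ref{L_dense}, are not available.

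Finally, from the spanning family $\{\bar*[\omega]:\omega\in A^{n-k}(X,\mathbf{Q})\}$ I would extract a $\mathbf{Q}$-basis $\bar*[\omega_1],\dots,\bar*[\omega_Q]$. Each $\omega_j$ lies in $A^{n-k}(X,\mathbf{Q})$, so $[\omega_j]\in H^{n-k,n-k}(X,\mathbf{Q})$ and $\mathcal{F}^{\omega_j^{2(k-1)},\{U_i\}_{i=0,\dots,K}}\neq\emptyset$; Corollary~\ref{C1bis} then produces a closed complex projective submanifold $Z_j:=Z^k(\omega_j)\subset X$ of complex codimension $n-(n-k)=k$ with $[Z_j]=\bar*[\omega_j]$. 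Therefore $\text{Hdg}^k(X,\mathbf{Q})=\langle[Z_1],\dots,[Z_Q]\rangle_{\mathbf{Q}}$, and by Remark~\ref{remChow} the $Z_j$ are algebraic cycles, so this establishes Conjecture~\ref{Hodge2} (equivalently Conjecture~\ref{Hodge1}) for $X$. I expect the main obstacle to be precisely the density-to-span passage of the previous paragraph; the remaining work is the bookkeeping of the $\bar*$-induced degree shift and the trivial degrees $k\in\{0,n\}$.
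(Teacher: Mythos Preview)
Your proposal is correct and follows essentially the same route as the paper's proof: handle $k\in\{0,n\}$ directly via Corollary~\ref{C1bis}, apply Lemma~\ref{L_dense} together with the $\bar*$-isomorphism of Proposition~\ref{close} to obtain a rational spanning family, and then invoke Corollary~\ref{C1bis} to realize the chosen basis classes as fundamental classes of codimension-$k$ complex submanifolds. You are in fact more careful than the paper on two points---you apply Lemma~\ref{L_dense} in degree $n-k$ (so that Corollary~\ref{C1bis} produces submanifolds of the correct codimension $k$), and you make explicit the passage from $L^2$-density to equality via the finite $\mathbf{Q}$-dimension of $\text{Hdg}^k(X,\mathbf{Q})$, a step the paper leaves implicit.
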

\begin{proof}
For $k\in\{0,n\}$ it follows directly from Corollary \ref{C1bis}. For $k=1,\dots,n-1$,
the $2k$-Hodge class group, defined as
\begin{equation}
\text{Hdg}^k(X,\mathbf{Q}):=H^{2k}(X,\mathbf{Q})\cap H^{k,k}(X,\mathbf{C}),
\end{equation}
by Proposition \ref{close}, being $\bar{*}$ an isomorphismus, can be represented as

\begin{equation}
\text{Hdg}^k(X,\mathbf{Q})=\langle\bar{*}[\omega_1],\dots,\bar{*}[\omega_Q]\rangle_\mathbf{Q},
\end{equation}
where $Q:=\dim_{\mathbf{Q}}(\text{Hdg}^k(X))$ and $\omega_1,\dots,\omega_Q$ are rational $(n-k,n-k)$ differential forms on $X$, i.e.
\begin{equation}
\int_{Y}i_Y^*(\omega_m)\in\mathbf{Q}
\end{equation}
for all complex $k$-codimensional submanifolds $Y$ of $X$, such that $[Y]\in H_{2n-2k}(X,\mathbf{Q})$ and all $m=1,\dots,Q$.
Since $\bar{*}$ sends harmonic $(n-k,n-k)$-forms to harmonic $(k,k)$-forms, if $\{\omega_1,\dots\omega_Q\}$ are harmonic, by Lemma \ref{L_dense} we have
\begin{equation}
\langle\bar{*}[\omega_1],\dots,\bar{*}[\omega_Q]\rangle_\mathbf{Q}=\langle A^k(X,\mathbf{Q})\rangle_{\mathbf{Q}},
\end{equation}
Every cohomology class has a unique harmonic representative. By Corollary \ref{C1bis} and Definition \ref{deffc} we can choose
the rational harmonic differential forms $\{\omega_1,\dots\omega_Q\}$  so that
 there exist $Q$  $k$-codimensional complex submanifolds $Z_1(\omega_1),\dots,Z_Q(\omega_Q)$
of $X$ such
\begin{equation}
\bar{*}[\omega_m]=[Z_m],
\end{equation}
for all $m=1,\dots,Q$, and, hence
\begin{equation}
\text{Hdg}^k(X,\mathbf{Q})=\langle[Z_1],\dots,[Z_Q]\rangle_\mathbf{Q}.
\end{equation}
as  Conjecture \ref{Hodge2} states.\\
\end{proof}
\begin{rem} The statement of Theorem \ref{thmH} is actually slightly stronger than the original Hodge conjecture, Conjecture \ref{Hodge2},
for it constructs a representation of the rational Hodge cohomology $\text{Hdg}^k(X,\mathbf{Q})$ as rational linear combination of
fundamental classes $[Z_1],\dots,[Z_Q]$ of complex submanifolds $Z_1,\dots,Z_Q$, not just subvarieties of the complex projective manifold $X$.
This means that $Z_1,\dots,Z_Q$ have no singularities.
\end{rem}
\begin{rem}
As we saw in its proof Lemma \ref{L_dense} cannot be extended to the integer Hodge cohomology, which is consistent
with the fact that the Hodge conjecture with integer coefficients is not true, as the
counterexamples of Atiyah-Hirzebruch \cite{AtHi62} and Totaro \cite{To97} demonstrate.
Therefore, the proof of Theorem \ref{thmH} does not extend to integer cohomology.
Moreover, Lemmata \ref{D0} and \ref{H} which are essential in the proof of Lemma \ref{L_dense}, strongly rely
on the complex projective manifold structure of $X$ and on its Riemannian metric given by the restriction of
the Fubini-Study metric. Hence, the proof of \ref{thmH} does not extend to K\"ahler manifolds, which is in line with the counterexamples
of Zucker \cite{Zu77} and Voisin \cite{Vo02}.
\end{rem}
\begin{rem}
The Hodge classes $[Z_1],\dots,[Z_Q]$ can be completed to a complex basis of the Dolbeault cohomology $H^{k,k}(X,\mathbf{C})$, by adding appropriate complex linear independent
cohomology classes $[\eta_{Q+1}],\dots,[\eta_C]$, which, however, are not fundamental classes of $X$.
\end{rem}
\noindent From Theorem \ref{thmH} we can now infer the validity of the Hodge conjecture.
\begin{theorem}
Conjecture \ref{Hodge1} holds true for any non singular projective algebraic variety.
\end{theorem}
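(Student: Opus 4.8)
The plan is to deduce the statement as a formal corollary of Theorem \ref{thmH}, which already establishes Conjecture \ref{Hodge2}, the fundamental-class reformulation of the Hodge conjecture. First I would observe that a non-singular projective algebraic variety $X$ over $\mathbf{C}$ is, by the definitions of Section 2, a complex submanifold of some $\mathbf{C}P^N$, hence a complex projective manifold in the sense used throughout the paper; in particular it is compact, orientable, and carries a K\"ahler metric by restriction of the Fubini--Study metric (Remark \ref{remFS}). Thus the hypotheses of Theorem \ref{thmH} are satisfied, and nothing further needs to be verified on the geometric side.

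Next, fix $k\in\{0,\dots,n\}$ with $n:=\dim_{\mathbf{C}}X$. By Theorem \ref{thmH} there exist $k$-codimensional complex submanifolds $Z_1,\dots,Z_Q$ of $X$, where $Q:=\dim_{\mathbf{Q}}\text{Hdg}^k(X,\mathbf{Q})$, such that $\text{Hdg}^k(X,\mathbf{Q})=\langle[Z_1],\dots,[Z_Q]\rangle_{\mathbf{Q}}$. By Corollary \ref{subvar}, which is a consequence of Chow's theorem, each $Z_m$ is a projective submanifold of $X$, hence an irreducible algebraic subvariety, and therefore defines an algebraic cycle of codimension $k$; by Remark \ref{remChow} there is no need to distinguish between the analytic and the algebraic cycle. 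Moreover, by the proposition following Definition \ref{deffc}, the analytically defined fundamental class $[Z_m]$ coincides with the cycle class of $Z_m$ in $H^{2k}(X,\mathbf{Z})$, and a fortiori in $H^{2k}(X,\mathbf{Q})$. Consequently every element of $\text{Hdg}^k(X,\mathbf{Q})=H^{2k}(X,\mathbf{Q})\cap H^{k,k}(X,\mathbf{C})$ is a rational linear combination of classes of algebraic cycles. Letting $k$ range over $0,\dots,n$ yields exactly the assertion of Conjecture \ref{Hodge1}.

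I do not expect a genuine obstacle here: the statement is a translation of Theorem \ref{thmH} through the dictionary between complex submanifolds of a projective manifold and algebraic subvarieties together with the standard identification of the fundamental class with the cycle class. The only point meriting a word of care is that the $Z_m$ furnished by Theorem \ref{thmH} are smooth; this is harmless, since a smooth subvariety is in particular an algebraic cycle, and it merely makes the conclusion slightly stronger than what Conjecture \ref{Hodge1} requires.
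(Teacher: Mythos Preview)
Your proposal is correct and takes essentially the same approach as the paper: the paper states this theorem immediately after Theorem \ref{thmH} with only the sentence ``From Theorem \ref{thmH} we can now infer the validity of the Hodge conjecture'' and no further proof, so your argument simply spells out that inference via Chow's theorem and the identification of fundamental classes with cycle classes. If anything, you have written more than the paper does.
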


\section{Conclusion}
A K\"ahler manifold can be seen as a Riemannian manifold carrying a Dirac bundle structure whose Dirac operator is the Dirac-Dolbeault operator.
Utilizing a theorem for the Green function for the Dirac Laplacian over a Riemannian manifold with boundary, the values of the sections of the Dirac bundle can be
represented in terms of the values on the boundary, extending the mean value theorem of harmonic analysis.
This representation, together with Nash-Moser generalized
inverse function theorem, leads to a technical result stating the existence of complex submanifolds of a projective manifold
 satisfying globally a certain partial differential equation under a certain injectivity assumption. This is the key to prove the existence
of complex submanifolds of a complex projective manifold
whose fundamental classes span the rational Hodge classes, proving the Hodge conjecture for non singular algebraic varieties.

\section*{Acknowledgement}
I would like to express my deep gratitude to Roberto Ferretti and Dustin Clausen for the many discussions which lead to improvements and reformulations of the present paper,
I would like to thank Claire Voisin and Pierre Deligne for highlighting parts of the first version of this paper which
 needed  important corrections. The possibly remaining mistakes are all mine.


\begin{thebibliography}{refer}
\bibitem[AtHi62]{AtHi62} M. ATIYAH and F. HIRZEBRUCH, {\it Analytic Cycles on Complex Manifolds}, Topology 1, (25-45), 1962.
\bibitem[Au82]{Au82} T. AUBIN, {\it Nonlinear Analysis on Manifolds. Monge-Ampère Equations}, Grundlehren der mathematischen Wissenschaften 252, 1982.
\bibitem [B\"{a}96]{Ba96} C. B\"AR, {\it Metrics with Harmonic Spinors}, Geometric and
Functional Analysis 6, (899-942), 1996.
\bibitem [Bal06]{Bal06} W. BALLMANN, {\it Lectures on K\"ahler Manifolds}, ESI Lectures in Mathematics and Physics, 2006.
\bibitem[Be06]{Be06} M. BERTI, {\it Lecture Notes on Nash-Moser Theory and Hamiltonian PDEs}, School on Nonlinear Differential Equations, ICTP, Trieste, 23 October 2006.
\bibitem [BoWo93]{BoWo93} B. BOO\SS/BAVNBEK and K.P.WOJCIECHOWSKI, {\it Elliptic Boundary Problems for Dirac Operators}, Birkh\"auser, 1993.
\bibitem[BoTu82]{BoTu82} R. BOTT and L. W. TU, {\it Differential Forms in Algebraic Topology}, Graduate Texts in Mathematics 82, Springer, 1982.
\bibitem[CDK95]{CDK95} E. CATTANI, P. DELIGNE and A. KAPLAN, {\it On the locus of Hodge classes}, J. Amer. Math. Soc. 8 (2), (483-506), 1995.
\bibitem[De06]{De06} P. DELIGNE, {\it The Hodge Conjecture}, Clay Math. Institute, 2006.
\bibitem[DjOk10]{DjOk10} M. DJORI\'{C} and M. OKUMURA, {\it CR Submanifolds of Complex Projective Space}, Developments in Mathematics, Vol. 19, Springer, 2010.
\bibitem[DjOk13]{DjOk13} M. DJORI\'{C} and M. OKUMURA, {\it Real Submanifolds of Codimension Two of a Complex Space Form}, Differential Geometry and its Applications
Vol. 31, Iss. 1, (17-28), 2013.
\bibitem[DjOk15]{DjOk15} M. DJORI\'{C} and M. OKUMURA, {\it Certain Submanifolds of Real Codimension Two of a Complex Projective Space},
Journal of Mathematical Analysis and Applications, Vol. 429, (532-541), 2015.
\bibitem [Gi84]{Gi84} P. B. GILKEY, {\it Invariance Theory, the Heat Equation, and the Atiyah-Singer Index Theorem}, Publish or Perish Inc., USA, 1984.
\bibitem [Gi93]{Gi93} P. B. GILKEY, {\it On the Index of Geometric Operators for Riemannian
Manifolds with Boundary}, Adv. in Math. 102, (129-183), 1993.
\bibitem[GrHa94]{GrHa94} PH. GRIFFITHS and J. HARRIS, {\it Principles of Algebraic Geometry}, Wiley Classics
Library, John Wiley \& Sons Inc., New York, 1994. (Reprint of the 1978 original).
\bibitem[Gr69]{Gr69} A. GROTHENDIECK, {\it Hodge’s General Conjecture is False for Trivial Reasons}, Topology 8,
(299–303), 1969.
\bibitem[Ho52]{Ho52} W. HODGE, {\it The Topological Invariants of Algebraic Varieties}, Proceedings of the International
Congress of Mathematicians, Cambridge, Mass., 1950, Vol. 1, pp. 182-192. Amer. Math. Soc., Providence, R. I., 1952.
\bibitem[H\"o76]{Ho76} L. H\"ORMANDER, {\it The Boundary Problems of Physical Geodesy}, Arch. Rat.
Mech. Anal., 62, (1-52), 1976.
\bibitem[Ko54]{Ko54} K. KODAIRA, {\it On K\"ahler Varieties of Restricted Type (an Intrinsic Characterization of
Algebraic Varieties)}, Ann. of Math. 60, (28-48), 1954.
\bibitem[Lef24]{Lef24} S. LEFSCHETZ, {\it L'Analysis situs et la géométrie alg\'{e}brique}, Collection de Monographies publi\'{e}e sous la
Direction de M. \'{E}mile Borel, Gauthier-Villars, Paris, 1924.
\bibitem[Lew99]{Lew99} J. LEWIS, {\it A Survey of the Hodge Conjecture}, second edition with an appendix B by B.
Brent Gordon, CRM Monograph Series, 10, AMS, Providence (RI), 1999.
\bibitem [Mo10]{Mo10} A. MOROIANU, {\it Lectures on K\"ahler Manifolds}, Cambridge, 2010.
\bibitem[Mo61]{Mo61}  J. MOSER, {\it A New Technique for the Construction of Solutions of Nonlinear
Differential Equations}, Proc. Nat. Acad. Sci., 47,  (1824-1831), 1961.
\bibitem[Mo62]{Mo62}  J. MOSER, {\it On Invariant Curves of Area Preserving Mappings on an Annulus},
Nachr. Akad. G\"ottingen Math. Phys., n. 1, (1-20), 1962.
\bibitem[Mo66]{Mo66}  J. MOSER, {\it A Rapidly Convergent Iteration Method and Non-Linear Partial
Differential Equations}, Ann. Scuola Normale Sup., Pisa, 3, 20, (499-535), 1966.
\bibitem[Mu76]{Mu76} D. MUMFORD, {\it Algebraic Geometry I, Complex Projective Varieties}, Grundl. Math. Wissensch.
221, Springer Verlag, Berlin etc., 1976.
\bibitem[Na56]{Na56} J. NASH, {\it The Embedding Problem for Riemannian Manifolds}, Annals of
Math. 63, (20-63), 1956.
\bibitem[Pe95]{Pe95} C. PETERS, {\it An Introduction to Complex Algebraic Geometry with Emphasis on the Theory of Surfaces},
Cours de l'Institut Fourier, no. 23, 1995.
\bibitem[Ra11]{Ra11} S. RAULOT, {\it Green Functions For The Dirac Operator Under Local
Boundary Conditions And Applications}, Annals of Global Analysis and Geometry, Springer Verlag, 39 (4), (337-359), 2011.
 \bibitem[Ra89]{Ra89} X. S. RAYMOND, {\it Simple Nash-Moser Implicit Function Theorem}, L'Enseignement Math\'{e}matique, 35, (217-226), 1989.
\bibitem[Se16]{Se16} P. SECCHI, {\it On the Nash-Moser Iteration Technique}, in Recent Developments of Mathematical Fluid Mechanics,
Editors: Herbert Amann, Yoshikazu Giga, Hideo Kozono, Hisashi Okamoto, Masao Yamazaki, Springer, Basel, 2016.
\bibitem[To97]{To97} B. TOTARO, {\it Torsion Algebraic Cycles and Complex Cobordism}, Journal of the American Mathematical Society, 10 (2), (467–493), 1997.
\bibitem[Vo02]{Vo02} C. VOISIN, {\it A Counterexample to the Hodge Conjecture Extended to K\"ahler Varieties}, n 20, (1057-1075), IMRN, 2002.
\bibitem[Vo10]{Vo10} C. VOISIN, {\it Hodge Theory and Complex Algebraic Geometry I, II}, Cambridge University Press, 2010.
\bibitem[Vo11]{Vo11} C. VOISIN, {\it Lectures on the Hodge and
Grothendieck–Hodge Conjectures}, School on Hodge Theory,  Rend. Sem. Mat. Univ. Politec. Torino, Vol. 69, 2, (149–198), 2011.
\bibitem[Vo16]{Vo16} C. VOISIN, {\it The Hodge Conjecture}, in Open Problems in Mathematics, Editors: John Forbes Nash, Jr., Michael Th. Rassias, Springer, 2016.
\bibitem[Tr77]{Tr77} H. TRIEBEL, {\it Interpolation Theory, Function Spaces, Differential Operators}, North-Holland Publishing Company, 1977.
\bibitem[We08]{We08} R. O. WELLS, {\it Differential Analysis on Complex Manifolds}, Springer, 2008.
\bibitem[Ze76]{Ze76}  E. ZEHNDER, {\it Generalized Implicit Function Theorems with Applications to
some Small Divisors Problems I-II}, Comm. Pure Appl. Math., 28,
(91-140), 1975; 29, (49-113), 1976.
\bibitem[Zu77]{Zu77} S. ZUCKER, {\it The Hodge Conjecture for Cubic Fourfolds}, Comp. Math., (199–209), 1977.
\end{thebibliography}
\end{document}